\def\R{\mathbb{R}}
\def\Z{\mathbb{Z}}
\def\N{{\mathbb{N}}}
\def\Q{{\mathbb{Q}}}
\def\C{{\mathbb{C}}}
\def\L{\Lambda}
\def\CP1{\mathbb{CP}^1}
\def\wt{\widetilde}
\def\ol{\overline}
\def\a{\alpha}
\def\b{\beta}
\def\g{\gamma}
\def\la{\lambda}
\def\ac2{\mathcal{AC}_2}
\def\ba{\begin{array}}
\def\ea{\end{array}}
\def\bn{\begin{enumerate}}
\def\en{\end{enumerate}}
\def\ll{\langle}
\def\ti{\tilde}
\def\rr{\rangle}
\def\vph{\varphi}
\def\zt{\Z[t^{\pm 1}]}
\def\toiso{\xrightarrow{\simeq}}
\def\favespace{(\partial E_C^\vph \setminus Y_a^\vph)}
\def\favespacen{\partial E_C^\vph \setminus Y_a^\vph}
\def\fs{\favespace}
\def\fsn{\favespacen}
\def\HH{\mathcal{H}}
\theoremstyle{plain}
\newtheorem*{theorem*}{Theorem}
\newtheorem{theorem}{Theorem}[section]
\newtheorem{proposition}[theorem]{Proposition}
\newtheorem{lemma}[theorem]{Lemma}
\newtheorem{corollary}[theorem]{Corollary}
\newtheorem*{claim}{Claim}
\theoremstyle{definition}
\newtheorem{remark}[theorem]{Remark}
\newtheorem*{fact*}{Fact}
\def\sm{\setminus}
\def\zst{\Z[s^{\pm 1},t^{\pm 1}]}
\DeclareMathOperator{\lk}{lk}
\DeclareMathOperator{\Ext}{Ext}
\DeclareMathOperator{\Hom}{Hom}
\DeclareMathOperator{\im}{im}
\DeclareMathOperator{\cl}{cl}
\DeclareMathOperator{\Aut}{Aut}
\DeclareMathOperator{\pr}{pr}
\DeclareMathOperator{\tr}{tr}
\DeclareMathOperator{\Id}{Id}
\DeclareMathOperator{\GL}{GL}
\DeclareMathOperator{\SL}{SL}
\DeclareMathOperator{\aut}{Aut}
\DeclareMathOperator{\id}{Id}
\begin{document}

\title{Links not concordant to the Hopf link}


\author{Stefan Friedl}
\address{Mathematisches Institut\\ Universit\"at zu K\"oln\\   Germany}
\email{sfriedl@gmail.com}

\author{Mark Powell}
\address{University of Edinburgh, United Kingdom}
\email{M.A.C.Powell@sms.ed.ac.uk}

\def\subjclassname{\textup{2000} Mathematics Subject Classification}
\expandafter\let\csname subjclassname@1991\endcsname=\subjclassname
\expandafter\let\csname subjclassname@2000\endcsname=\subjclassname


\maketitle

\begin{abstract}
We give new Casson--Gordon style obstructions for a two--component link to be topologically concordant to the Hopf link.
\end{abstract}

\section{Introduction}

\subsection{Definitions and previous results}
An
\emph{$m$--component (oriented) link} is an embedded locally flat ordered
collection of $m$ disjoint (oriented) circles in~$S^{3}$.
 We say that two $m$--component oriented
links $L=L_1\,\cup \dots \cup\, L_m$ and $L'=L'_1\,\cup \dots \cup\, L'_m$ are \emph{concordant} if there exists a collection of $m$
disjoint, embedded, locally flat, oriented annuli $A_1,\dots,A_m$ in $S^{3}
\times [0,1]$ such that $\partial A_i= L_i\times \{0\} \cup -L_i'\times
\{1\}, i=1,\dots,m$.

Throughout the paper we denote by $H$ the Hopf link equipped with the orientation such that the linking number is $1$, i.e. $H$ consists of two generic fibres of the Hopf map \[\ba{rcl} h \colon S^3 &\to& S^2 \\ (z_1,z_2) &\mapsto & [z_1,z_2], \ea\]
where $S^3 \subset \mathbb{C}^2$ is the set of $(z_1,z_2) \in \C^2$ with $|z_1|^2 + |z_2|^2 = 1$ and $S^2 \cong \CP1$.

There is a rich literature on obstructions for two links to be concordant. Roughly speaking the obstructions fall into the following categories:
\bn
\item obstructions to the components (viewed as knots) being concordant (the landmark papers being the work of Levine \cite{Levine}, Casson and Gordon \cite{CassonGordon} and Cochran, Orr and Teichner \cite{COT});
\item Milnor's $\ol{\mu}$-invariants (see Milnor \cite{Mil57}), in particular the linking numbers;
\item obstructions from the multivariable Alexander polynomial and the Levine-Tristram signatures (see Murasugi \cite{Mu67}, Kawauchi \cite{Ka77}\cite[Theorem~B]{Ka78}, Nakagawa \cite{Na78} and Tristram \cite{Tristram69});
\item twisted Alexander polynomials, twisted signature invariants and $L^2$--signature invariants, where the corresponding representations factor through a nilpotent quotient (e.g. a $p$--group) or the algebraic closure of the link group
(see  Cha and Ko \cite{CK99,CK06}, Cha and Friedl \cite{cha_twisted_2010}, Friedl \cite{Fri05}, Harvey \cite{Ha08}, Levine \cite{Le94} and Smolinsky \cite{Sm89});
furthermore see  Cha and Orr \cite{chaorr} for a closely related approach;
\item twisted Alexander polynomials and twisted signature invariants of boundary links, where the corresponding representations factor through the canonical epimorphism onto the free group (see Levine \cite{Le07});
\item by considering a prime power branched cover over one component of a link, one can view the preimages of the other components as a link in a rational homology sphere;
using this reduction in the number of components one can reduce problems in link concordance to the study of `rational knot concordance' (see Cochran-Orr \cite{CO90,CO93},
 Cha-Kim \cite{CK08}, Cha-Livingston-Ruberman \cite{CLR08} and Cha-Kim-Ruberman-Strle \cite{CKRS10});
\item by doing a Dehn filling on one component one can also reduce problems in link concordance to the study of knot concordance in integral or rational homology spheres.
This approach, which  for example appears in Cha-Ko \cite{CK99b} and the aforementioned \cite{CKRS10}, is often referred to as `blowing down along one component'.

\en
Davis \cite{Da06}  proved a partial converse to the above obstructions:  if $L$ is a 2-component link with linking number one and such that the multivariable Alexander polynomial $\Delta_L\in \Z[s^{\pm 1},t^{\pm 1}]$ equals one, then the link is concordant to the Hopf link. This result generalised earlier results by Freedman (\cite{Fre84,FQ}) on knot concordance and by Hillman \cite{Hi02}.

In this paper we will give new obstructions to a link being concordant to the Hopf link by using signature invariants corresponding to appropriately chosen metabelian representations.
This approach is inspired by the classic work of Casson and Gordon on knot concordance \cite{CassonGordon}. Generalising their work to the setting of link concordance
requires overcoming several delicate technical issues. One particularly difficult step (Theorem \ref{Thm:main_thm_FP10} in the present paper) appeared in an earlier paper by the authors
\cite{FP10}.

\subsection{The linking pairing for 2--component links with linking number 1}
In the following we let $L$ be a 2--component link of linking number 1.
Throughout
this paper we write $X = X_L=S^3\sm \nu L$, where $\nu L$ is a tubular
neighbourhood of $L$.
The boundary $\partial X_L $ can be written as the union of two tori $\partial X_{L}^1$ and $\partial X_{L}^2$ which correspond to the two link components.
Moreover, for each $i$, we can decompose $\partial X_{L}^i \cong S^1 \times D^1 \cup_{S^1 \times S^0} S^1 \times D^1$, where each cross section $S^1 \times \{pt\}$ is a meridian of the link component $L_i$.  We denote this splitting by \[\partial X_{L}^i \cong Y_{a}^i \cup_{S^1 \times S^0} Y_{b}^i,\]
for $i = 1,2$.  Define $Y_a = Y_{a}^1 \sqcup Y_{a}^2$ and $Y_b = Y_{b}^1 \sqcup Y_{b}^2$.

Note that we have a canonical isomorphism $H_1(X_L;\Z)\toiso \Z^2$ which we will use to identify the groups. Now let $\varphi\colon H_1(X_L;\Z)=\Z^2\to A$ be an epimorphism onto a finite abelian group.
In the following we denote by $p\colon X_L^\varphi\to X$  the corresponding covering space and we write $Y_a^\varphi=p^{-1}(Y_a)$ and $Y_b^\varphi=p^{-1}(Y_b)$.
In Section \ref{section:defn_of_linking_forms} we will see that there exists a non--singular linking form
\[\lambda \colon TH_1(X^\varphi,Y_a^\varphi;\Z)\times TH_1(X^\varphi,Y_{a}^\varphi;\Z)\to \Q/\Z,\]
where, given an abelian group $G$, we write $TG:=\mbox{Tor}_\Z(G)$.

Now suppose that $L$ is concordant to the Hopf link via a concordance $C \cong S^1 \times I \subset S^3 \times I$.  Denote the exterior of the concordance by $E_C$.  The inclusion maps $X_L \to E_C$ and $X_H \to E_C$ are $\Z$-homology equivalences. Note that  $\vph$ canonically extends to a homomorphism $H_1(E_C;\Z)\to A$ which we also denote by $\varphi$.
In the following we denote by $p\colon E_C^\varphi\to E_C$ the corresponding covering space.

Before we state our first result we need to introduce one more definition.
We say that a homomorphism $\varphi\colon \Z^2\to A$ to a finite abelian group is \emph{admissible}
if $\varphi$ is given by the direct sum of two projections on each summand of $\Z^2$, i.e. $\varphi$ has to be of the form
$\Z^2=\Z\oplus \Z\to \Z_k\oplus \Z_l$. 
We now have the following proposition.

\begin{proposition}\label{prop:linkformintro}
Let $L$ be an oriented 2--component link with linking number $1$ which is concordant to the Hopf link via a concordance $C$.  Then for any admissible epimorphism $\varphi\colon \Z^2\to A$ to a group of prime power order,
\[P := \ker(TH_1(X^\vph,Y_a^\vph;\Z) \to TH_1(E_C^{\vph},Y_a^\vph;\Z)),\]
is a metaboliser of $\lambda$, i.e.  $P = P^{\bot}$.
\end{proposition}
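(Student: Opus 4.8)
The plan is to prove the standard "half lives, half dies" type statement, adapted to this relative setting with the extra twist that we work with torsion subgroups and must control what happens away from $Y_a^\vph$. The proof splits naturally into two inclusions: $P \subseteq P^\perp$ (isotropy of $P$) and $P^\perp \subseteq P$ (maximality), with the first being essentially formal and the second requiring a duality/counting argument.

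**Isotropy.** First I would show $P \subseteq P^\perp$. The linking form $\lambda$ on $TH_1(X^\vph, Y_a^\vph;\Z)$ is, by its construction in Section~\ref{section:defn_of_linking_forms}, defined via the boundary map in the long exact sequence of the pair together with Poincaré–Lefschetz duality on $X^\vph$. The key point is that $E_C^\vph$ is a $4$–manifold (with boundary/corners) whose boundary contains a copy of $X^\vph$, and the relevant torsion linking form on $TH_1(X^\vph,Y_a^\vph;\Z)$ bounds in the sense that it extends over $E_C^\vph$. Concretely, if $x, y \in P$ then both lift to classes in $H_2(E_C^\vph, X^\vph \cup \text{(rest of boundary)};\Z)$ or die in $H_1(E_C^\vph, Y_a^\vph;\Z)$, and the pairing $\lambda(x,y)$ is computed by intersecting a $2$–chain in $E_C^\vph$ bounding (a multiple of) $x$ rel $Y_a^\vph$ with $y$; since this intersection can be pushed into $E_C^\vph$ off of $X^\vph$, it vanishes in $\Q/\Z$. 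This is the usual argument that the metaboliser coming from a bounding $4$–manifold is self–annihilating, and I would phrase it via the commutative ladder relating the exact sequences of $(X^\vph, Y_a^\vph)$ and $(E_C^\vph, Y_a^\vph)$.

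**Maximality via counting.** The harder inclusion is $P^\perp \subseteq P$, equivalently $|P|^2 = |TH_1(X^\vph,Y_a^\vph;\Z)|$ since $\lambda$ is non–singular (so $|P^\perp| = |TH_1|/|P|$, and $P = P^\perp$ follows from $P\subseteq P^\perp$ once the orders match). To compute $|P|$ I would analyze the long exact sequence of the pair $(E_C^\vph, X^\vph)$, or rather the Mayer–Vietoris / excision setup comparing $H_*(X^\vph, Y_a^\vph)$ and $H_*(E_C^\vph, Y_a^\vph)$, using that $X_L \hookrightarrow E_C$ and $X_H \hookrightarrow E_C$ are $\Z$–homology equivalences and hence (since $\vph$ is admissible and $A$ has prime power order, so that a Maschke/transfer or Levine–type argument applies) the induced maps on the $\vph$–twisted homology are "as surjective as possible". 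The Hopf link exterior $X_H^\vph$ is a union of solid-torus–like pieces whose relative homology $TH_1(X_H^\vph, Y_a^\vph;\Z)$ I can compute directly, and the corresponding metaboliser for the Hopf link itself is trivial in the appropriate sense. Poincaré–Lefschetz duality for the pair $(E_C^\vph, \partial)$ then gives the relation $|\ker| \cdot |\operatorname{coker}| $ balancing, forcing $|P|^2 = |TH_1(X^\vph,Y_a^\vph;\Z)|$.

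**Main obstacle.** The step I expect to be most delicate is the homology computation showing the order of $P$ is exactly the square root of $|TH_1(X^\vph,Y_a^\vph;\Z)|$ — this requires carefully tracking the $\vph$–twisted homology of $E_C^\vph$ rel its various boundary pieces ($X_L^\vph$, $X_H^\vph$, and the part coming from $Y_a^\vph \times I$), and invoking that prime–power–order covers of $\Z$–homology equivalences behave well (a point where the "prime power order" and "admissible" hypotheses are genuinely used, à la Casson–Gordon and the Levine argument on finite covers). The bookkeeping with the corner structure of $E_C^\vph$ and the decomposition $\partial X_L^i = Y_a^i \cup Y_b^i$ is where I'd expect to spend the most effort, but conceptually it is the standard Casson–Gordon metaboliser argument transplanted to the link-concordance-to-Hopf-link setting.
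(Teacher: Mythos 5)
Your overall outline (isotropy plus a maximality step driven by duality over the concordance exterior, with prime power order and admissibility entering through the behaviour of finite covers) is the right shape, and it is indeed the Casson--Gordon template the paper follows. But the proposal has a genuine gap exactly at the point you flag as the ``main obstacle'': the duality statement you invoke is asserted, not established, and it is not the naive one. Poincar\'e--Lefschetz duality for $(E_C^\vph,\partial E_C^\vph)$ does not directly pair $TH_1(E_C^\vph,Y_a^\vph)$ with $TH_2(E_C^\vph,X^\vph)$; one must first show that the inclusions induce isomorphisms $TH_2(E_C^\vph,X^\vph)\cong TH_2(E_C^\vph,\partial E_C^\vph\setminus Y_a^\vph)$ and $TH_1(E_C^\vph,Y_a^\vph)\cong TH_1(E_C^\vph,X_H^\vph)$ (Lemmas \ref{lem:isom1} and \ref{lem:isom2}), and these rest on Lemma \ref{lem:diagram}, where admissibility is used in an essential and specific way: $H_1(\widetilde{Y}_b)\to H_1(X_H^\vph)$ is an isomorphism \emph{if and only if} $\vph$ is admissible, and this is what forces $TH_1(\partial E_C^\vph\setminus Y_a^\vph,X^\vph)=0$ and $TH_1(X_H^\vph,Y_a^\vph)=0$. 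Your proposal only cites admissibility generically (``\`a la Casson--Gordon''), so the one place where the hypothesis actually does work is missing. Likewise, the claim that the maps $H_*(X^\vph;\Q)\to H_*(E_C^\vph;\Q)$ are isomorphisms (needed to know $H_2(E_C^\vph,X^\vph)$ is all torsion, Lemma \ref{lemma:Zhomequiv_implies_torsion}) requires the injectivity result Theorem \ref{Thm:main_thm_FP10}; ``behaves well by Levine'' is a reasonable pointer but is doing real work here.

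Once the duality package is in place, your counting argument ($|P|^2=|TH_1(X^\vph,Y_a^\vph;\Z)|$, then $P=P^\perp$ from $P\subseteq P^\perp$ and nonsingularity) would work, and it is a legitimate alternative to what the paper does: the paper instead records the dualities as a commutative diagram with exact rows (Lemma \ref{lem:forms}) and deduces both inclusions $P\subseteq P^\perp$ and $P^\perp\subseteq P$ by a short diagram chase, with no order counting at all. So the difference in the final step is cosmetic; the substantive missing content is the construction of the isomorphisms $\vartheta_1,\vartheta_2$ and their compatibility with $\partial$, $i_*$ and $\lambda$ --- equivalently, your unproven ``balancing'' relation. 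Your geometric argument for $P\subseteq P^\perp$ (pushing a bounding $2$-chain off $X^\vph$) is fine in spirit, but making it rigorous for this particular $\lambda$ (defined through Poincar\'e duality, the universal coefficient theorem, and the canonical $Y_a\leftrightarrow Y_b$ identification) again amounts to the commutativity of the left-hand square of Lemma \ref{lem:forms}, so it does not come for free either.
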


Following the philosophy of \cite{CassonGordon}, we will use this proposition to show that certain metabelian representations will extend over a concordance exterior.

\subsection{Definition of the invariant $\tau(L,\chi)$}

We define the closed manifold $$M_L := X_L \cup_{\partial X_L} X_H:$$ an ordering and an orientation of the link components of $L$ and $H$ determines a canonical identification of $\partial X_L$ with $\partial X_H$ which identifies the corresponding meridians and longitudes.  We say that a homomorphism $\varphi \colon H_1(M_L;\Z) \to A$ is admissible if it restricts to an admissible homomorphism on $$H_1(X_L;\Z) \subseteq H_1(M_L;\Z) \cong H_1(X_L;\Z) \oplus \Z.$$  We denote the corresponding finite cover by $M_L^\vph$.  Note that $M_H^\vph$ is a 3-torus.  We denote the 3-torus $S^1 \times S^1 \times S^1$ by $T$.
Let $$\phi' \colon \pi_1(M_L) \to \HH':= H_1(M_L;\Z) \cong \Z^3$$ be the Hurewicz map.  Let $$\phi \colon \pi_1(M_L^\vph) \subseteq \pi_1(M_L) \to \HH'$$ be the restriction of $\phi'$, and define $\HH := \im \phi$.  Note that also $\HH \cong \Z^3$.  We pick an isomorphism $$\psi \colon \pi_1(T) \toiso H_1(T;\Z) \toiso \HH.$$  Finally let $$\chi\colon H_1(M_L^\vph;\Z) \to S^1 \subset \C$$ be a character
which factors through $\Z_{q^l}$ for some prime $q$, and denote the trivial character by $\tr$.

Following Casson-Gordon, in Section \ref{section:deftau}, we define a Witt group obstruction
\[\tau(L,\chi) \in L^0(\C(\HH)) \otimes_{\Z} \Z[1/q],\]
in terms of the twisted intersection form of a 4-manifold whose boundary is a finite number of copies of $(M_L^\vph \sqcup -T, \chi \times \phi \sqcup \tr \times \psi)$.

\subsection{Statement of the main theorem and examples}

The following is  our main theorem.

\begin{theorem}\label{Thm:mainthm_intro}
Suppose that $L$ is concordant to the Hopf link.  Then for any admissible homomorphism $\varphi \colon H_1(M_L) \to A$ to a finite abelian $p$-group $A$, there exists a metaboliser $P = P^\bot$ for the linking form
\[TH_1(X_L^\varphi,Y_a^\varphi)\times TH_1(X_L^\varphi,Y_{a}^\varphi)\to \Q/\Z\]
with the following property: for any character of prime power order $\chi \colon H_1(M_L^\varphi) \to \Z_{q^k} \to S^1$, which satisfies that $\chi|_{H_1(X_L^{\vph})}$ factors through \[\chi|_{H_1(X_L^{\vph})} \colon H_1(X_L^\vph) \to H_1(X_L^\vph,Y_a^\vph) \xrightarrow{\delta} \Z_{q^k} \hookrightarrow S^1\] and that $\delta$ vanishes on $P$, we have that
\[\tau(L,\chi) = 0 \in L^0(\C(\HH)) \otimes_{\Z} \Z[1/q].\]
\end{theorem}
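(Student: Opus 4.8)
The plan is to follow the Casson--Gordon template adapted to the link setting. The key geometric input is that, if $L$ is concordant to the Hopf link via a concordance $C$, then $M_L = X_L \cup_{\partial X_L} X_H$ bounds a natural $4$-manifold $N$ built from the concordance exterior $E_C$: indeed $M_L = \partial(X_L \times I \cup E_C)$ after the appropriate gluing, and since both $X_L \to E_C$ and $X_H \to E_C$ are $\Z$-homology equivalences, $N$ is a $\Z$-homology $(M_L \times I)$-like cobordism. Passing to the cover, $M_L^\vph$ bounds $N^\vph := p^{-1}(N)$, and one checks that $N^\vph$ is a rational homology cobordism from $M_L^\vph$ to $T = M_H^\vph$, compatibly with the maps to $\HH$. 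Thus $\tau(L,\chi)$, which is \emph{a priori} defined by capping off copies of $(M_L^\vph \sqcup -T)$ by \emph{some} $4$-manifold, can be computed using $N^\vph$ itself (after taking a suitable number of copies to kill the bordism class, using that $\Omega_3$ of the relevant classifying space is torsion and we have inverted $q$).

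The first step is therefore to verify that $\chi$ and $\psi \times \phi$ extend over $N^\vph$: the extension of $\phi$ is automatic from the Hurewicz-map description, and the extension of $\chi$ is exactly where Proposition~\ref{prop:linkformintro} is used. The hypothesis that $\chi|_{H_1(X_L^\vph)}$ factors through $\delta \colon H_1(X_L^\vph, Y_a^\vph) \to \Z_{q^k}$ with $\delta$ vanishing on $P = \ker(TH_1(X_L^\vph, Y_a^\vph) \to TH_1(E_C^\vph, Y_a^\vph))$ guarantees precisely that $\delta$, hence $\chi|_{X_L^\vph}$, factors through $H_1(E_C^\vph, Y_a^\vph)$; combined with the behaviour on the $X_H$ side (where $M_H^\vph = T$ and the relevant homology is controlled), this shows $\chi$ extends to $\chi_N \colon H_1(N^\vph) \to \Z_{q^k} \to S^1$. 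The admissibility condition is what makes the covers and the relative homology groups behave, and is needed to invoke Proposition~\ref{prop:linkformintro} in the first place.

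Granting the extension, the second step is the standard Witt-group vanishing argument. Over the field $\C(\HH)$, the twisted intersection form of the $4$-manifold $N^\vph$ (capping off an appropriate multiple of $(M_L^\vph \sqcup -T)$) is metabolic: its kernel contains the image of $H_2$ of the boundary, and because $N^\vph$ is a rational homology cobordism the relevant half-lives-half-dies statement shows this image is a metaboliser for the nonsingular form obtained after dividing out the radical. Here one uses that coefficients are taken in $\C(\HH)$, a field, so that the twisted homology is finite-dimensional and Poincar\'e--Lefschetz duality gives the required self-annihilating subspace; the localization $\otimes_\Z \Z[1/q]$ absorbs the ambiguity coming from the number of copies and from the torsion bordism group, exactly as in Casson--Gordon and in \cite{CassonGordon}. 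Hence $\tau(L,\chi) = 0$ in $L^0(\C(\HH)) \otimes_\Z \Z[1/q]$.

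The main obstacle is the bookkeeping around the relative homology groups and the two boundary pieces: one must show that gluing $X_L^\vph \times I$ to $E_C^\vph$ along the correct sub-boundary produces a $4$-manifold whose boundary is genuinely (a multiple of) $M_L^\vph \sqcup -T$ with the characters matching up, and that the map $\pi_1(M_L^\vph) \to \HH$ together with $\chi$ extends coherently --- this requires carefully tracking how the $Y_a^\vph$ pieces sit inside $\partial E_C^\vph$, which is the content encoded in $\favespace$ in the body of the paper, and it is the step where the Davis-style Alexander-module input and Theorem~\ref{Thm:main_thm_FP10} from \cite{FP10} are really needed to control $H_*(E_C^\vph)$. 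Once that homological input is in hand, the signature-vanishing is formal.
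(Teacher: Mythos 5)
Your overall skeleton does match the paper's: take the concordance exterior, cap it off with a product piece to get a $4$-manifold with boundary $M_L \sqcup -T$, use Proposition \ref{prop:linkform} to produce the metaboliser, extend $\chi$ over the cover of this $4$-manifold, and invoke Theorem \ref{Thm:main_thm_FP10} (via Proposition \ref{prop:lifting_homology_to_covers}) to control its twisted homology. (Small slip: the piece to glue on is $X_H \times I$, not $X_L \times I$, so that $\partial W_C = M_L \sqcup -M_H$.) However, your endgame is not correct as stated. Over the coefficient field $\Q(\xi_{q^l})(\HH)$, the image of $H_2(\partial W)$ in $H_2(W)$ is precisely the \emph{radical} of the intersection form (exactness of the pair sequence plus $H_2(W,\partial W)\cong H_2(W)^{*}$ by duality and universal coefficients over a field), so it cannot serve as ``a metaboliser for the nonsingular form obtained after dividing out the radical'' unless that nonsingular form is already zero; the half-lives-half-dies statement you appeal to proves nothing here. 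What actually makes the argument work, and what you relegate to ``bookkeeping'', is that the homological input is used to show the nonsingular parts vanish outright: applying Proposition \ref{prop:lifting_homology_to_covers} twice (together with Lemma \ref{lemma:homology_M_L_vanishes}) gives $H_*(W_C^\vph;\Q(\xi_{q^l})(\HH)) = 0$, so the twisted form is vacuously Witt trivial, while rationally Lemma \ref{lemma:Zhomequiv_implies_torsion} gives $H_2(W_C^\vph;\Q)\subseteq \im\big(H_2(M_L^\vph;\Q)\to H_2(W_C^\vph;\Q)\big)$, i.e.\ the radical is everything. Your proposal never supplies an argument of either kind, so the vanishing of $\tau(L,\chi)$ is not established.

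There is a second genuine gap: you cannot simply ``compute $\tau(L,\chi)$ using $N^\vph$ itself''. The invariant is defined with the \emph{trivial} character on the torus boundary components, whereas the extension of $\chi$ over $W_C^\vph$ restricts to a possibly non-trivial character on $T = M_H^\vph$. You flag ``characters matching up'' as an obstacle but never resolve it; the paper resolves it with the auxiliary cobordism $W_\chi$ of Lemma \ref{Lemma:any_character_cobordant_to_trivial_one}, gluing $q^l$ copies of $W_C^\vph$ to $W_\chi$ and checking that $W_\chi$ has trivial twisted and rational intersection forms. Relatedly, the extension provided by Propositions \ref{prop:extend} and \ref{prop:extend2} in general lands in $\Z_{q^l}$ for some $l\geqslant k$, not in $\Z_{q^k}$ as you assert; this prime-power increase is exactly why the obstruction is pushed into $L^0(\C(\HH))\otimes_\Z \Z[1/q]$ via $\Q(\xi_{q^k})\hookrightarrow\C$, and it needs to be tracked rather than assumed away.
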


This theorem is reminiscent of `Casson-Gordon type' obstruction theorems in the context of knot concordance, especially of the main theorem of \cite{CassonGordon}.  The proof of Theorem \ref{Thm:mainthm_intro} is indeed guided by the knot theoretic case coupled with the main result of \cite{FP10}.
 In particular we will show that if a link $L$ is concordant to $H$ and if $\chi$ satisfies the conditions of the theorem, then there exists a 4-manifold $W_C^\vph = E_C^\vph \cup (X_H^\vph \times I)$ whose boundary is $M_L^\vph \sqcup -M_H^\vph=M_L^\vph \sqcup -T$, and which has trivial (twisted) intersection form.

In Section \ref{section:examples} we will discuss an example of a link for which our main theorem detects that it is not concordant to the Hopf link.
More precisely, we start out with a 2--component link with trivial components which is concordant to the Hopf link and we then take an appropriate satellite.
We then show that our obstructions detect that the resulting link is not concordant to the Hopf link, but we will see that the methods (1) to (6) listed above do not detect this fact.
It is likely though that our example gets detected by first blowing down and then considering the Casson--Gordon invariants of the resulting knot.

\begin{remark}
\bn
\item When we exhibit our example, we make use of a Maple program to calculate the homology of a finite cover of the link complement explicitly.  We also use Maple to identify a curve in a link diagram, for use in a satellite construction, which lifts to a generator of the torsion part of homology.
\item One theme of this paper is that many arguments which work in the case of knots, for constructing concordance obstructions which involve representations that factor through derived series quotients, can be extended to the case of two component links of linking number one.  Extending these arguments to the case of more components or other linking numbers seems much harder.
\item It is an interesting question whether there also exists a `Cochran--Orr--Teichner type' obstruction to a 2--component link being concordant to a Hopf link.  Added in proof: Jae Choon Cha has recently answered this question in the affirmative: see \cite{Cha_symmetric_Whitney_towers}.
\item We conjecture that Theorem \ref{Thm:mainthm_intro} obstructs $L$ from being ``\emph{height $(3.5)$-Whitney tower/Grope concordant}'' to the Hopf link in $S^3 \times I$ (compare \cite[Sections~8~and~9]{COT}, \cite{CST11}).  Added in proof: Min Hoon Kim informs us that he has confirmed the conjecture: the obstructions of this paper indeed vanish for links which are height $(3.5)$-Whitney tower/Grope concordant to the Hopf link.  At the time of writing his paper is in preparation.
\item Blowing down is in practice a remarkably effective method for studying link concordance. Even though we do not think that one can recover Theorem \ref{Thm:mainthm_intro} using that approach, it seems that it will be difficult to find an example of a link that Theorem \ref{Thm:mainthm_intro} can show is not concordant to the Hopf link but for which the blowing down approach fails.
\item Whereas blowing down is rather effective as a way of obstructing links from being concordant to the Hopf link, it cannot obstruct the existence of height $(3.5)$ Whitney tower or Grope concordances.  From this point of view the conjecture in (4) would show that the obstructions of this paper are stronger than blowing down.
\en
\end{remark}

\subsection{Organization of the paper}

The paper is organised as follows.  In Section \ref{section:finite_covers_linking_forms} we define linking forms on the torsion part of the first homology of finite covers of the link exterior, and use them to control the representations which extend over the exterior of a concordance.  In Section \ref{section:obstructions} we define the obstruction $\tau(L,\chi)$ and prove Theorem \ref{Thm:mainthm_intro}.  Finally, in Section \ref{section:examples} we construct examples of links which are not concordant to the Hopf link, and discuss the vanishing of many previously known obstructions.
We conclude with a short appendix in which we prove a formula relating the homology of finite abelian covers of a 2-component link with linking number one to values of the multivariable Alexander polynomial.

\section{Finite Covers and Linking Forms}\label{section:finite_covers_linking_forms}

\subsection{Injectivity Result}\label{section:injectivity_result}

Below we will quote a theorem from the authors' previous paper \cite{FP10}.  We will make use of this at several points in the construction of the obstruction theory.  The special case that $\mathcal{H} \cong \Z$ was essentially proved by Letsche in \cite{Let00}.  Our paper makes use of the case that $\mathcal{H} \cong \Z^3$.  Given a prime $q$, recall that a $q$-group is a finite group of $q$-power order.  Let $\pi$ be a group, let $Q$ be a field and let $\HH$ be a free abelian group.  Representations $\phi \colon \pi \to \HH$ and $\a \colon \pi \to \GL(k,Q)$ give rise to a right $\Z[\pi]$-module structure on $Q^k\otimes_{Q} Q[\HH] = Q[\HH]^k$ as follows:
\[ \ba{rcl} Q^k\otimes_{Q} Q[\HH] \,\,\times \,\Z[\pi]&\to & Q^k\otimes_{Q} Q[\HH]  \\
(v\otimes p,g)&\mapsto & (v\cdot \a(g)\otimes p\cdot \phi(g)).\ea \]

\begin{theorem}\label{Thm:main_thm_FP10}
Let $\pi$ be a group, let $q$ be a prime and let $f \colon M \to N$ be a morphism of projective left $\Z[\pi]$-modules such that
\[\Id \otimes f\colon \Z_q \otimes_{\Z[\pi]} M \to \Z_q \otimes_{\Z[\pi]} N\]
is injective.  Let $\phi \colon \pi \to \mathcal{H}$ be a homomorphism to a torsion-free abelian group and let $\a \colon \pi \to \GL(k,Q)$ be a representation, with $Q$ a field of characteristic zero.  If $\a|_{\ker (\phi)}$ factors through a $q$-group, then
\[\Id\otimes f\colon Q[\HH]^k\otimes_{\Z[\pi]} M \to Q[\HH]^k\otimes_{\Z[\pi]} N\]
is also injective.
\end{theorem}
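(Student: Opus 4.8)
The plan is to treat the two coefficient systems separately: on the side of the module $Q[\HH]^k$ I would rewrite it as something induced from $\ker\phi$, where the $q$-group hypothesis makes it flat, and on the side of $f$ I would propagate the mod-$q$ injectivity up a nilpotent filtration of $\Z_q[\pi]$, then assemble the two. First I would reduce. Since $Q[\HH]^k\otimes_{\Z[\pi]}(-)$ is additive and $M,N$ are projective, adding free complements lets one take $M=\Z[\pi]^a$, $N=\Z[\pi]^b$ with $f$ a matrix over $\Z[\pi]$; the hypothesis then says that $f$, reduced modulo $q$ and then through the augmentation $\Z_q[\pi]\to\Z_q$, is injective. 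Decomposing $Q[\HH]^k$ over the cosets of $\im\phi$ one may assume $\HH=\im\phi$, and replacing $\pi$ by $\pi/(\ker\phi\cap\ker\a)$ -- a quotient killed by $\a$ and by $\phi$, hence by $\b$, over which projectivity is preserved and which does not disturb the above augmentation -- one may assume $G:=\ker\phi$ is a finite $q$-group, so that $\Z[\pi]$ sits in an extension $1\to G\to\pi\to\HH\to 1$ with $G$ normal.

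On the coefficient side, the module $Q[\HH]^k$ with its $(\a,\phi)$-action is isomorphic, as a $\Z[\pi]$-module, to $\mathrm{Ind}_G^\pi\!\bigl((Q^k,\a|_G)\bigr)$: indeed $(Q[\HH],\phi)\cong\mathrm{Ind}_G^\pi(Q_{\mathrm{triv}})$, and the projection formula identifies $(Q^k,\a)\otimes_Q\mathrm{Ind}_G^\pi(Q_{\mathrm{triv}})$ with $\mathrm{Ind}_G^\pi\!\bigl((Q^k,\a|_G)\bigr)$. Hence by the induction--tensor adjunction $Q[\HH]^k\otimes_{\Z[\pi]}f\cong(Q^k,\a|_G)\otimes_{\Z[G]}f$, with $f$ now viewed over $\Z[G]$. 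Since $G$ is finite and $Q$ has characteristic zero, $(Q^k,\a|_G)$ is projective over $\Q[G]$ by Maschke, hence flat over $\Q[G]$ and therefore over $\Z[G]$; so it suffices to prove $f$ injective over $\Z[G]$, and -- restriction of scalars being faithful exact -- that reduces to $f$ being injective over $\Z[\pi]$.

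On the side of $f$ I would show first that $f$ is injective over $\Z_q[\pi]$ and then lift. As $G$ is a $q$-group and $\Z_q$ has characteristic $q$, the augmentation ideal $\mathfrak m$ of $\Z_q[G]$ is nilpotent; since $G$ is normal in $\pi$, the two-sided ideal $I:=\mathfrak m\,\Z_q[\pi]=\Z_q[\pi]\,\mathfrak m$ is nilpotent with $\Z_q[\pi]/I\cong\Z_q[\HH]$, and -- this is the key point -- each graded piece $I^j/I^{j+1}\cong\mathrm{Ind}_G^\pi\!\bigl(\mathfrak m^j/\mathfrak m^{j+1}\bigr)$ is a \emph{free} $\Z_q[\HH]$-module, because $\mathfrak m^j/\mathfrak m^{j+1}$ is a \emph{trivial} $\Z_q[G]$-module (as $g-1\in\mathfrak m$) and the induction to $\pi$ of a trivial $\Z_q[G]$-module is a free $\Z_q[\HH]$-module. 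Therefore the $I$-adic filtration reduces injectivity of $f$ over $\Z_q[\pi]$ to injectivity of $\bar f:=\Z_q[\HH]\otimes_{\Z[\pi]}f$ over $\Z_q[\HH]$: the induced maps on graded pieces are direct sums of copies of $\bar f$, hence injective, and one finishes with the usual separatedness argument since $I^N=0$. Finally $\bar f$ is injective over the domain $\Z_q[\HH]$ because the hypothesis is exactly that $\bar f$ becomes injective after the augmentation $\Z_q[\HH]\to\Z_q$, so some $a\times a$ minor of $\bar f$ has nonzero image in $\Z_q$ and is thus a nonzero element of $\Z_q[\HH]$, giving $\bar f$ rank $a$ over the fraction field. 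Lifting $\Z_q[\pi]$ to $\Z_{(q)}[\pi]$ and then to $\Z[\pi]$ is routine: $q$ is a non-zero-divisor, the relevant projective modules are $\Z_{(q)}$-torsion free with $\bigcap_n q^n(-)=0$, and one applies the standard fact that a map injective modulo $q$ is injective.

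The step I expect to require the most care -- and the conceptual heart of the theorem -- is the realisation that one must \emph{not} restrict the mod-$q$ hypothesis to $\ker\phi$, which would destroy injectivity, but must instead keep all of $\pi$ and climb the nilpotent filtration of $\Z_q[\pi]$; what makes this succeed is the freeness of the graded pieces $I^j/I^{j+1}$ over $\Z_q[\HH]$, which in turn rests on $\mathfrak m$ being the \emph{augmentation} ideal. Beyond that, the case $\HH\cong\Z$ is essentially Letsche's; the only genuinely new point for $\HH\cong\Z^3$ is that one carries the argument out over a higher-rank Laurent polynomial ring, where a nonzero minor still suffices for injectivity because $\Z_q[\HH]$ remains a domain.
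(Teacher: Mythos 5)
This theorem is not proved in the paper you were given: it is imported verbatim from the authors' earlier work \cite{FP10}, so there is no internal proof to measure you against, and I have assessed your argument on its own terms. Its substance is correct, and the architecture is close in spirit to the known proofs (Letsche's for $\HH \cong \Z$, and the general case in \cite{FP10}): after reducing to $\phi$ surjective and dividing $\pi$ by $\ker(\phi)\cap\ker(\a)$ (legitimate, since both $\a$ and $\phi$ factor through the quotient, base change preserves projectivity, and $\Z_q\otimes_{\Z[\pi]}-$ is unchanged), you are in the situation $1\to G\to\pi\to\HH\to 1$ with $G$ a finite $q$-group. Your projection-formula identification $Q[\HH]^k\cong \operatorname{Ind}_G^\pi(Q^k,\a|_G)$ and the resulting isomorphism $Q[\HH]^k\otimes_{\Z[\pi]}f\cong (Q^k,\a|_G)\otimes_{\Z[G]}f$ are right, and flatness of $(Q^k,\a|_G)$ over $\Z[G]$ (Maschke plus flatness of $Q[G]$ over $\Z[G]$) correctly reduces everything to integral injectivity of $f$. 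The mod-$q$ step is the heart, and it checks out: with $\mathfrak{m}\subset\Z_q[G]$ the (nilpotent) augmentation ideal and $I=\mathfrak{m}\Z_q[\pi]$, normality of $G$ gives $I^j=\mathfrak{m}^j\Z_q[\pi]$, the graded pieces are $(\mathfrak{m}^j/\mathfrak{m}^{j+1})\otimes_{\Z_q}\Z_q[\HH]$ with $\pi$ acting through $\HH$, and for projective (in particular flat) $\Z_q[\pi]$-modules the graded maps are direct sums of copies of $\bar f$ over $\Z_q[\HH]$; the minor argument over the domain $\Z_q[\HH]$ and the separatedness argument are both sound, as is the final lift from $\Z_q$ to $\Z$ using that projective $\Z[\pi]$-modules are $\Z$-free, hence $q$-torsion-free and $q$-adically separated. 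In effect you verify by hand that such extensions lie in Strebel's class $D(\Z_q)$, which is exactly the right mechanism here.

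The one genuine lapse is the opening reduction ``adding free complements lets one take $M=\Z[\pi]^a$, $N=\Z[\pi]^b$'': the theorem assumes only projectivity, not finite generation, and stabilising a projective module in general yields a free module of possibly infinite rank, so $f$ need not be a finite matrix. This is harmless but should be addressed: every step of your argument except the minor computation works verbatim for free modules of arbitrary rank, and injectivity of $\bar f$ over $\Z_q[\HH]$ in the infinite-rank case follows by restricting a putative kernel element to the finitely many basis vectors it involves, whose images lie in finitely many coordinates of the target, and applying your nonzero-minor argument to that finite submatrix (its augmentation is injective, being the restriction of an injective map to a coordinate summand). With that sentence added, and the garbled phrase ``hence by $\b$'' repaired, the proof is complete.
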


A special case of this theorem applies to prove the following lemma, which allows us to control the rank of the $\Q$-homology of finite covers of the link exterior.

\begin{lemma}\label{lemma:Zhomequiv_implies_torsion}
Suppose that $f \colon X \to E$ is a map of finite CW complexes which induces a $\Z$-homology equivalence
\[f_* \colon H_*(X;\Z) \xrightarrow{\simeq} H_*(E;\Z).\]
Let $\varphi \colon \pi_1(E) \to A$ be an epimorphism to a finite abelian $p$-group $A$.  Let $E^\vph$ be the induced cover and let $X^\vph$ be the corresponding pull-back cover.  Then
\[f_* \colon H_*(X^\vph;\Q) \xrightarrow{\simeq} H_*(E^\vph;\Q).\]
is an isomorphism.
\end{lemma}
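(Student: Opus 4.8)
The plan is to apply Theorem \ref{Thm:main_thm_FP10} with the cellular chain complex of $X$ over $\Z[\pi_1(E)]$ as the source of the modules $M$ and $N$. First I would fix a CW structure on $E$ and pull it back along $f$ (after a cellular approximation) to get a CW structure on $X$, so that $f$ is cellular. Passing to the covers $E^\vph \to E$ and $X^\vph \to E$ (the latter being the pull-back), the cellular chain complexes $C_*(\wt E)$ and $C_*(\wt X)$ of the universal covers are complexes of finitely generated free $\Z[\pi_1(E)]$-modules, and $f$ induces a chain map $f_\# \colon C_*(\wt X) \to C_*(\wt E)$ between them. The mapping cone $\mathcal{C}(f_\#)$ is then a bounded complex of finitely generated free $\Z[\pi_1(E)]$-modules.

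The key input is that $f$ is a $\Z$-homology equivalence, i.e.\ $\Id \otimes f_\#$ is a quasi-isomorphism after $\otimes_{\Z[\pi_1(E)]} \Z$ (with trivial action), so $\mathcal{C}(f_\#) \otimes_{\Z[\pi_1(E)]} \Z$ is acyclic. Since each module in $\mathcal{C}(f_\#)$ is free, I would then want to conclude that $\Id \otimes f_\#$ is also a quasi-isomorphism after $\otimes_{\Z[\pi_1(E)]} \Z_p$, equivalently that $\mathcal{C}(f_\#) \otimes \Z_p$ is acyclic; this follows from a universal-coefficients/Tor spectral-sequence argument over $\Z$ since $\Z_p$ is obtained from $\Z$ and the integral cone is acyclic with free terms (one can argue degreewise that an acyclic bounded complex of free modules stays acyclic under any base change, as it is chain contractible). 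Now I split the acyclic complex $\mathcal{C}(f_\#) \otimes \Z_p$ into short exact sequences and feed each boundary inclusion — which is a monomorphism of projective $\Z_p[\pi_1(E)]$-modules, hence lifts to a monomorphism over $\Z[\pi_1(E)]$ after $\otimes \Z_q$ with $q = p$ — into Theorem \ref{Thm:main_thm_FP10}, taking $\a$ to be the trivial one-dimensional representation (which certainly factors through a $q$-group on $\ker\phi$), $\phi \colon \pi_1(E) \to \HH$ the appropriate homomorphism to a torsion-free abelian group governing the cover, and $Q = \Q$. Wait — I need $\a|_{\ker\phi}$ to factor through a $q$-group, and the relevant $\HH$ here is the deck group of $X^\vph \to X$ lifted appropriately; more precisely $E^\vph$ corresponds to $\ker(\pi_1(E) \to A)$, which is not torsion-free, so I instead take $\phi$ to be the trivial homomorphism to $\HH = 0$ and let $\a \colon \pi_1(E) \to A \to \GL(|A|,\Q)$ be the regular representation of the $p$-group $A$: then $Q[\HH]^k \otimes_{\Z[\pi_1(E)]} C_*(\wt X) = \Q[A] \otimes_{\Z[\pi_1(E)]} C_*(\wt X) \cong C_*(X^\vph;\Q)$, and Theorem \ref{Thm:main_thm_FP10} gives injectivity of $\Id \otimes f_\#$ on each short exact sequence, hence acyclicity of $\mathcal{C}(f_\#) \otimes \Q[A]$, i.e.\ $f_* \colon H_*(X^\vph;\Q) \to H_*(E^\vph;\Q)$ is an isomorphism.

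The main obstacle is the bookkeeping around which $(\phi,\a)$ to plug in: Theorem \ref{Thm:main_thm_FP10} is stated for a \emph{single} map $f \colon M \to N$ that is injective mod $q$, whereas a homology equivalence gives a quasi-isomorphism of complexes, not a single injection. So the real work is the standard homological-algebra reduction: break the acyclic mapping cone (after reduction mod $p$) into its constituent short exact sequences of projectives, observe each is split, and apply the theorem term by term — and then reassemble the resulting injectivity statements into the acyclicity of the twisted cone. I would also need to check the compatibility of the pull-back cover construction with the algebraic $\otimes \Q[A]$ description of $C_*(X^\vph;\Q)$, which is routine but must be stated carefully. One subtlety worth flagging: to pass from "injective mod $p$" to applying the theorem I should make sure the boundary maps in the reduced cone really are injective (not merely that homology vanishes) on the relevant submodules — this is automatic once I've chosen the splitting, since in a split-acyclic complex of projectives the differentials restricted to the images are isomorphisms onto kernels.
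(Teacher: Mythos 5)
Your overall strategy is the same as the paper's: reduce to showing that the relative (mapping cone) complex of finitely generated free $\Z[\pi_1(E)]$-modules, which is acyclic after $\otimes_{\Z[\pi_1(E)]}\Z_p$, remains acyclic after $\otimes_{\Z[\pi_1(E)]}\Q[A]$, and invoke Theorem \ref{Thm:main_thm_FP10} in exactly the special case you identify ($\HH=\{0\}$, $q=p$, $Q=\Q$, $\a$ the regular representation of $A$, so that $Q[\HH]^k\otimes_{\Z[\pi_1(E)]}-$ computes $\Q$-homology of the $\vph$-covers). The paper does precisely this, citing \cite[Proposition~2.10]{COT} (see also \cite[Proposition~4.1]{FP10}) for the surrounding chain-level argument.

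However, the mechanism you propose for that chain-level step has a genuine gap. You want to split the acyclic complex $\mathcal{C}(f_\#)\otimes_{\Z[\pi_1(E)]}\Z_p$ into its short exact sequences and feed ``each boundary inclusion'' into Theorem \ref{Thm:main_thm_FP10}. But tensoring over $\Z[\pi_1(E)]$ with $\Z_p$ kills the group action: the reduced cone is a complex of $\Z_p$-vector spaces, not of projective $\Z_p[\pi_1(E)]$-modules, and its cycle and boundary subspaces are not the mod-$p$ reductions of maps of projective $\Z[\pi_1(E)]$-modules in any given way --- the integral differentials are not injective mod $p$, their integral images and kernels are in general not projective, and they do not reduce to the mod-$p$ images and kernels since $-\otimes_{\Z[\pi_1(E)]}\Z_p$ is only right exact. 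So there is nothing of the form required by the hypotheses of Theorem \ref{Thm:main_thm_FP10} to which your term-by-term application could be applied; and even granting ad hoc lifts of the individual inclusions, the resulting injectivity statements over $\Q[A]$ would not reassemble into acyclicity of $\mathcal{C}(f_\#)\otimes_{\Z[\pi_1(E)]}\Q[A]$, because kernels and images do not commute with this (non-exact) base change. The missing idea --- which is exactly what the ``chain homotopy lifting argument'' of \cite[Proposition~2.10]{COT} supplies --- is to choose a chain contraction $s$ of the mod-$p$ complex, lift it via projectivity of the integral chain modules to degree-one maps $\tilde{s}$ on the integral cone, and apply Theorem \ref{Thm:main_thm_FP10} to the single map $\partial+\tilde{s}\colon \mathcal{C}_{\mathrm{odd}}\to\mathcal{C}_{\mathrm{even}}$ (and its counterpart with the parities exchanged), which is an isomorphism, in particular injective, mod $p$; injectivity over $\Q[A]$ plus a dimension count over the finite-dimensional $\Q$-algebra $\Q[A]$ then gives an isomorphism, from which acyclicity of the twisted complex follows. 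A further minor point: the complex you use for $X$ should be that of the cover induced by $\pi_1(X)\to\pi_1(E)$, i.e.\ $\Z[\pi_1(E)]\otimes_{\Z[\pi_1(X)]}C_*(\wt{X})$, rather than the chain complex of the universal cover of $X$ itself, since $f$ need not induce an isomorphism on fundamental groups.
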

\begin{proof}
We refer to \cite[Proposition~2.10]{COT} for the details of a standard chain homotopy lifting argument, which should be applied to the relative chain complex
\[C_* := C_*(E,X;\Q[A]).\]
This argument was also given in the proof of \cite[Proposition~4.1]{FP10}.  We insert into the argument, at the point where injectivity is required, an application of Theorem \ref{Thm:main_thm_FP10}. We use the special case that $\HH \cong \{0\}$, $q=p$, $Q = \Q$, and $\a \colon \pi_1(E) \to A \to GL(k,\Q)$ is given by $k = |A|$ and the regular representation $A \to \Aut_Q(\Q[A])$.  This special case was of course well-known before our contribution.
\end{proof}

\subsection{Definition of linking forms}\label{section:defn_of_linking_forms}

Let $L$ be an oriented 2-component link with linking number $1$. We write $X=X_L$.  Let $\varphi\colon \Z^2\to A$ be any epimorphism onto a finite abelian group.
We will now define a non-singular linking form:
\[\lambda=\lambda_L \colon TH_1(X^\varphi,Y_a^\varphi;\Z)\times TH_1(X^\varphi,Y_{a}^\varphi;\Z)\to \Q/\Z.\]
 First, we have the Poincar\'{e} duality isomorphism.
\[TH_1(X^\varphi,Y_a^\varphi;\Z) \xrightarrow{\simeq} TH^2(X^\varphi,Y_b^\varphi;\Z).\]
By the universal coefficient theorem, since torsion in the cohomology $H^2(X^\varphi,Y_b^\varphi;\Z)$ maps to zero in $\Hom_{\Z}(H_2(X^\varphi,Y_b^\varphi;\Z),\Z)$, there is a canonical isomorphism:
\[TH^2(X^\varphi,Y_b^\varphi;\Z) \xrightarrow{\simeq} \Ext_{\Z}^1(TH_1(X^\varphi,Y_b^\varphi;\Z),\Z).\]
For the reader's convenience we now recall the proof of the following well--known claim:

\begin{claim}
Given any finite abelian group $G$ there exists a canonical isomorphism
\[ \Ext_{\Z}^1(G,\Z)\xrightarrow{\simeq} \Hom(G,\Q/\Z).\]
\end{claim}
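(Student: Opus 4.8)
The plan is to construct the isomorphism from the long exact sequence in $\Ext$-groups obtained by applying $\Hom_\Z(G,-)$ to the short exact sequence $0 \to \Z \to \Q \to \Q/\Z \to 0$. Since $G$ is finite, $\Hom_\Z(G,\Q) = 0$ and $\Ext^1_\Z(G,\Q) = 0$ (because $\Q$ is a divisible, hence injective, $\Z$-module), so the connecting homomorphism in the long exact sequence
\[
0 \to \Hom_\Z(G,\Q) \to \Hom_\Z(G,\Q/\Z) \xrightarrow{\partial} \Ext^1_\Z(G,\Z) \to \Ext^1_\Z(G,\Q) \to \cdots
\]
is forced to be an isomorphism $\Hom_\Z(G,\Q/\Z) \xrightarrow{\simeq} \Ext^1_\Z(G,\Z)$; inverting it gives the desired canonical map. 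This is manifestly natural in $G$ since the long exact sequence and its connecting maps are natural in the first argument, which is exactly the sense in which the isomorphism is ``canonical.''

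First I would recall that $\Q$ is an injective $\Z$-module (it is divisible and $\Z$ is a PID), which immediately yields $\Ext^1_\Z(G,\Q) = 0$ for every $\Z$-module $G$. Second, I would observe that since $G$ is finite (hence torsion), every homomorphism $G \to \Q$ is zero, so $\Hom_\Z(G,\Q) = 0$. Third, I would write down the relevant piece of the long exact $\Ext$-sequence associated to $0 \to \Z \to \Q \to \Q/\Z \to 0$ and conclude that $\partial$ is an isomorphism. Finally I would take the inverse of $\partial$ as the asserted canonical isomorphism, and note naturality is automatic from the naturality of the connecting homomorphism.

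There is really no serious obstacle here: the statement is standard homological algebra and the only points that need mentioning are the injectivity of $\Q$ and the vanishing of $\Hom_\Z(G,\Q)$ for $G$ torsion. If one wished to avoid citing injectivity of $\Q$, one could instead use the free resolution coming from a presentation $0 \to \Z^n \xrightarrow{D} \Z^n \to G \to 0$ (with $D$ a diagonal Smith normal form matrix, using that $G$ is finite so $D$ is square with nonzero determinant), compute $\Ext^1_\Z(G,\Z) = \coker(D^T \colon \Z^n \to \Z^n) \cong \bigoplus \Z/d_i$ and $\Hom_\Z(G,\Q/\Z) = \ker(\ldots) \cong \bigoplus \Z/d_i$, and match them; but the derived-functor argument is cleaner and makes canonicity transparent, so that is the route I would take. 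The mild subtlety worth a sentence is simply spelling out why the map is canonical — namely that it is $\partial^{-1}$ for the connecting homomorphism of a sequence that does not depend on $G$, so it is natural in $G$.
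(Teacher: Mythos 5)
Your proposal is correct and follows essentially the same route as the paper: both apply the long exact $\Ext$-sequence for $0 \to \Z \to \Q \to \Q/\Z \to 0$, use the vanishing of $\Hom_\Z(G,\Q)$ and $\Ext^1_\Z(G,\Q)$ for finite $G$, and take the resulting connecting isomorphism (the paper phrases the vanishing as ``a simple calculation from the definition,'' while you justify it via divisibility/injectivity of $\Q$, but the argument is the same).
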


Let $G$ be any finite abelian group.
The short exact sequence of coefficients \[0 \to \Z \to \Q \to \Q/\Z \to 0\] gives rise to a long exact sequence of cohomology
\[ \Ext^0_\Z(G,\Q)  \to  \Ext^0_\Z(G,\Q/\Z) \to \Ext^1_\Z(G,\Z) \to \Ext^1_\Z(G,\Q).\]
It is a simple calculation from the definition to see that $\Ext^*_\Z(G,\Q)=0$; we thus obtain an isomorphism $ \Ext^0_\Z(G,\Q/\Z)  \xrightarrow{\simeq} \Ext^1_\Z(G,\Z)$.
Combining the inverse of this isomorphism with the canonical isomorphism
$ \Ext_{\Z}^0(G,\Q/\Z)\xrightarrow{\simeq}  \Hom(G,\Q/\Z)$ shows that we have a canonical isomorphism $\Ext_{\Z}^1(G,\Z)\xrightarrow{\simeq} \Hom(G,\Q/\Z)$.
This concludes the proof of the claim.

\begin{claim}
There exists a canonical isomorphism of relative homology groups:
\[TH_1(X^\varphi,Y_a^\varphi;\Z) \xrightarrow{\simeq} TH_1(X^\varphi,Y_b^\varphi;\Z).\]
\end{claim}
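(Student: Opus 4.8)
The plan is to realise the isomorphism by an explicit homeomorphism of pairs $(X^\vph,Y_a^\vph)\to(X^\vph,Y_b^\vph)$; applying $H_1(-;\Z)$ and restricting to the torsion subgroups then gives the claim. This homeomorphism will be the identity outside a collar of $\partial X^\vph$, and on the boundary it will be a canonical ``half-turn'' interchanging $Y_a^\vph$ and $Y_b^\vph$.

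First I would fix the boundary picture. Because $L$ has linking number $1$, for each $i$ the inclusion induces an isomorphism $H_1(\partial X^i;\Z)\toiso H_1(X;\Z)=\Z^2$ (the meridian of $L_i$ goes to a generator, the longitude of $L_i$ to the meridian of $L_{3-i}$), so $\vph$ remains surjective on each boundary torus and $\partial X^\vph$ consists of two tori, one over each $\partial X^i$. Write $\partial X^i=S^1\times S^1$ with the first factor the meridian direction and the second (``longitude'') factor split as $D^1_a\cup_{S^0}D^1_b$, so that $Y_a^i=S^1\times D^1_a$ and $Y_b^i=S^1\times D^1_b$. There is an orientation-preserving homeomorphism $\id\times f$ of $\partial X^i$, isotopic to the identity, with $f(D^1_a)=D^1_b$ and $f$ exchanging the two points of $S^0$; in coordinates $\partial X^i=\R^2/\Z^2$ in which the two arcs of the longitude circle have equal length, one may simply take $f$ to be the half-turn. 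Let $\rho$ be the resulting homeomorphism of $\partial X$. The obvious interpolation $\rho_t$ from $\rho_0=\id$ to $\rho_1=\rho$ (translate by $t$ times a half-turn in the longitude direction) lifts directly, via the $\R^2$-translation description, to an isotopy $\rho^\vph_t$ of $\partial X^\vph$ with $\rho^\vph_0=\id$; put $\rho^\vph:=\rho^\vph_1$. Then $\rho^\vph$ is a homeomorphism of $\partial X^\vph$, isotopic to the identity, with $\rho^\vph(Y_a^\vph)=Y_b^\vph$.

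Next I would extend $\rho^\vph$ over $X^\vph$. Choose a collar $\partial X^\vph\times[0,1]\hookrightarrow X^\vph$ with $\partial X^\vph\times\{0\}$ the boundary, and set $\hat\rho(x,t):=(\rho^\vph_{1-t}(x),t)$ on the collar and $\hat\rho:=\id$ elsewhere. This is a homeomorphism of $X^\vph$: it is a homeomorphism of the collar, it agrees with the identity along $\partial X^\vph\times\{1\}$, and it restricts to $\rho^\vph$ along $\partial X^\vph\times\{0\}=\partial X^\vph$. Since $Y_a^\vph\subset\partial X^\vph$ we get $\hat\rho(Y_a^\vph)=\rho^\vph(Y_a^\vph)=Y_b^\vph$, so $\hat\rho$ is a homeomorphism of pairs $(X^\vph,Y_a^\vph)\toiso(X^\vph,Y_b^\vph)$, inducing an isomorphism $TH_1(X^\vph,Y_a^\vph;\Z)\toiso TH_1(X^\vph,Y_b^\vph;\Z)$.

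It remains to check that the resulting isomorphism is canonical, which I expect to be the only delicate point. The half-turn $\rho$ is canonical up to isotopy through homeomorphisms of $\partial X$ taking $Y_a$ onto $Y_b$ (any two orientation-preserving self-homeomorphisms of $S^1$ carrying $D^1_a$ onto $D^1_b$ and exchanging the points of $S^0$ are isotopic through such maps); the lift $\rho^\vph$ is pinned down by the explicit translation description; and the collar is unique up to isotopy rel $\partial X^\vph$. Hence $\hat\rho$ is well defined up to isotopy through homeomorphisms of pairs, and $\hat\rho_*$ is independent of all choices. The one thing to watch is the choice of lift of $\rho$ to $\partial X^\vph$: the space of isotopies of $\partial X$ from the identity to $\rho$ is not simply connected, so different isotopies lift to lifts of $\rho$ that differ by deck transformations, and these in general act nontrivially on $TH_1$; this is why I would define $\rho^\vph$ concretely by the $\R^2$-translation rather than by an abstract covering-homotopy argument. (A purely homological attempt---comparing the long exact sequences of the triples $(X^\vph,\partial X^\vph,Y_a^\vph)$ and $(X^\vph,\partial X^\vph,Y_b^\vph)$ and using that $H_*(\partial X^\vph,Y_a^\vph;\Z)\cong H_*(Y_b^\vph,Y_a^\vph\cap Y_b^\vph;\Z)$ is free---does not obviously suffice, since the relevant cokernels can carry torsion, so the homeomorphism is the clean route.)
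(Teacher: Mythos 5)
Your construction of the isomorphism is essentially the paper's: a homeomorphism of pairs $(X^\vph,Y_a^\vph)\to(X^\vph,Y_b^\vph)$ which rotates the boundary tori, is damped to the identity across a collar, and is the identity elsewhere; that part, and the conclusion that it induces an isomorphism on relative $H_1$ and hence on torsion, is fine. The gap is in the canonicity step. You must show the induced map is independent of all choices (the product coordinates on the boundary tori, the half-turn, the isotopy to the identity, the lift, the collar), and your argument --- half-turn canonical up to isotopy through homeomorphisms carrying $Y_a$ onto $Y_b$, lift pinned by the explicit translation, collar unique up to isotopy, ``hence $\hat\rho$ is well defined up to isotopy through homeomorphisms of pairs'' --- does not go through as stated. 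To compare two admissible choices you have to lift an isotopy in the base between the two boundary rotations; the lifted isotopy starting at your preferred lift of the first rotation ends at \emph{some} lift of the second, and nothing forces it to be the preferred lift of the second: it can differ by a deck transformation, which, as you yourself point out, can act nontrivially on $TH_1$. Fixing each lift by explicit $\R^2$-translations removes the ambiguity within one coordinate system but does not reconcile two different ones, so canonicity (which the claim asserts, and which is needed later for the linking form to be well defined) is not established by your argument.

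The repair is the paper's observation, which is weaker than what you attempt but is all that is needed: one does not need the homeomorphism to be canonical up to isotopy of pairs, only the induced map on torsion to be choice-independent. Every homeomorphism produced by your recipe is the identity outside a collar neighbourhood of $\partial X^\vph$. Since $H_0(Y_a^\vph;\Z)$ is torsion-free, any class in $TH_1(X^\vph,Y_a^\vph;\Z)$ is the image under $j^a_*\colon H_1(X^\vph;\Z)\to H_1(X^\vph,Y_a^\vph;\Z)$ of some $\tilde x$, which can be represented by a cycle disjoint from the collar; such a cycle is fixed by the homeomorphism, so for any admissible choice the induced map sends $j^a_*(\tilde x)$ to $j^b_*(\tilde x)$. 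This is manifestly the same for all choices (in particular it is blind to the deck-transformation ambiguity), which is exactly how the paper proves canonicity.
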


First note that the two inclusion maps \[\bigsqcup_2 \, S^1 \times D^1 \cong Y_a \hookrightarrow X_L\] and \[\bigsqcup_2 \, S^1 \times D^1 \cong Y_b \hookrightarrow X_L\] are isotopic. We thus get an  isomorphism of relative homology groups:
\[TH_1(X^\varphi,Y_a^\varphi;\Z) \xrightarrow{\simeq} TH_1(X^\varphi,Y_b^\varphi;\Z).\]
To see that this isomorphism is canonical, consider the following $\Z$-coefficient commutative diagram.
\[\xymatrix{H_1(Y_a^\vph) \ar[r] \ar[d]^{r}_{\cong} & H_1(X_L^\vph) \ar[r] \ar[d]^f_{\cong} & H_1(X_L^\vph,Y_a^\vph) \ar[r] \ar[d] & H_0(Y_a^\vph) \ar[d]_{\cong}^r \ar[r] & H_0(X_L^\vph) \ar[d]^f_{\cong}\\
H_1(Y_b^\vph) \ar[r] & H_1(X_L^\vph) \ar[r] & H_1(X_L^\vph,Y_b^\vph) \ar[r] & H_0(Y_b^\vph) \ar[r] & H_0(X_L^\vph)}\]
The maps labelled $r$ are induced by a rotation of the two boundary tori which sends $Y_a^\vph$ to $Y_b^\vph$.  It is a homeomorphism so induces isomorphisms on homology.  The maps labelled $f$ are induced by an isotopy of homeomorphisms between this rotation of $\partial X_L^\vph$ and the identity map of $\partial X_L^\vph$, \[i \colon \partial X_L^\vph \times I \to \partial X_L^\vph.\]  We map a neighbourhood $\partial X_L^\vph \times I$ of the boundary to itself using \[\ba{rcl} f \colon \partial X_L^\vph \times I &\to& \partial X_L^\vph \times I\\ (x,t) & \mapsto & (i(x,t),t).\ea\]  Extending $f$ by the identity to the rest of $X_L^\vph$ we obtain a homeomorphism, which induces the isomorphisms $f$ on homology.  The map of relative groups is also induced by the map $f$: note that $f$ maps $Y_a^\vph$ to $Y_b^\vph$.  By the 5-lemma, the map of relative groups is an isomorphism.  Since $H_0(Y_a^\vph)$ is torsion-free, an element in $TH_1(X_L^\vph,Y_a^\vph)$ maps to zero in $
 H_0(Y_a^\vph)$, and so lies in the image of $H_1(X_L^\vph)$.  This of course also holds with $a$ replaced by $b$.  Any element of $H_1(X_L^\vph)$ can be taken to be disjoint from a collar neighbourhood of the boundary, which implies that the map $TH_1(X^\varphi,Y_a^\varphi) \xrightarrow{\simeq} TH_1(X^\varphi,Y_b^\varphi)$ is canonical: it does not depend on choices of $r$ or $i$, since their effect is restricted to a neighbourhood of the boundary of $X_L^\vph$. This concludes the proof of the claim.

Combining the above isomorphisms yields
\[TH_1(X^\varphi,Y_a^\varphi;\Z) \xrightarrow{\simeq}\Hom_{\Z}(TH_1(X^\varphi,Y_b^\varphi;\Z),\Q/\Z) \]\[\toiso \Hom_{\Z}(TH_1(X^\varphi,Y_a^\varphi;\Z),\Q/\Z),\]
which
defines the linking form $\lambda=\lambda_L$ and shows that it is non-singular.  Standard arguments show that this linking form is symmetric.

\subsection{Linking forms and concordance}

\begin{proposition}\label{prop:linkform}
Let $L$ be an oriented 2-component link which is concordant to the Hopf link via a concordance $C$. We write $X=X_L$.  Let  $\varphi\colon \Z^2\to A$ be an
 admissible epimorphism onto a group of prime power order.
Then
\[P := \ker(TH_1(X^\vph,Y_a^\vph;\Z) \to TH_1(E_C^{\vph},Y_a^\vph;\Z))\]
is a metaboliser of the linking form  $\lambda_L$.
\end{proposition}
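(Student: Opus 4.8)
The plan is to prove that $P$ is a metaboliser of $\lambda_L$, i.e.\ $P = P^\bot$, by the usual ``half-lives-half-dies'' strategy applied to the cobordism $E_C^\vph$ rel $Y_a^\vph$, combined with the non-singularity of $\lambda_L$ established in Section~\ref{section:defn_of_linking_forms}. First I would set up the long exact sequence of the pair $(E_C^\vph, X_L^\vph \cup Y_a^\vph \times I)$, or more symmetrically view $E_C^\vph$ as a cobordism between the link exteriors $X_L^\vph$ and $X_H^\vph$ relative to $Y_a^\vph$. The key homological input is Lemma~\ref{lemma:Zhomequiv_implies_torsion}: since $X_L \to E_C$ is a $\Z$-homology equivalence and $\vph$ has $p$-power order, the induced map $H_*(X_L^\vph;\Q) \to H_*(E_C^\vph;\Q)$ is an isomorphism, and likewise for $X_H \to E_C$. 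Hence all the relevant relative $\Q$-homology vanishes, so the maps $H_1(X_L^\vph, Y_a^\vph;\Z) \to H_1(E_C^\vph, Y_a^\vph;\Z)$ become isomorphisms after tensoring with $\Q$; this forces the kernel $P$ to be exactly the torsion that dies, and the relevant groups $TH_1(E_C^\vph, Y_a^\vph;\Z)$ to be finite.

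The next step is a Poincar\'e--Lefschetz duality argument on $E_C^\vph$. Using duality for the $4$-manifold $E_C^\vph$ with the boundary decomposed appropriately (the boundary of $E_C^\vph$ is $X_L^\vph \cup X_H^\vph$ glued along $\partial$, and one splits each $\partial X^i$ into the $Y_a$ and $Y_b$ pieces as in the definition of $\lambda$), one gets that the linking form $\lambda_L$ on $TH_1(X_L^\vph, Y_a^\vph;\Z)$ is ``bounded'' by $E_C^\vph$, in the sense that the composite $P \hookrightarrow TH_1(X_L^\vph,Y_a^\vph) \xrightarrow{\lambda} \Hom(TH_1(X_L^\vph,Y_a^\vph),\Q/\Z)$ kills $P$. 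Concretely: if $x, y \in P$ then $x, y$ bound relative $2$-chains in $E_C^\vph$ (after clearing denominators), and the linking pairing $\lambda(x,y)$ can be computed as an intersection number of these bounding chains inside $E_C^\vph$, hence is an integer, hence zero in $\Q/\Z$. This gives the inclusion $P \subseteq P^\bot$.

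For the reverse inclusion $P^\bot \subseteq P$, the plan is a counting argument: since $\lambda_L$ is non-singular, $|P| \cdot |P^\bot| = |TH_1(X_L^\vph, Y_a^\vph;\Z)|$, so it suffices to show $|P|^2 \geq |TH_1(X_L^\vph, Y_a^\vph;\Z)|$, i.e.\ that $P$ is at least ``half''. This follows from the exact sequence
\[
TH_1(X_L^\vph, Y_a^\vph;\Z) \to TH_1(E_C^\vph, Y_a^\vph;\Z) \to TH_1(E_C^\vph, X_L^\vph \cup Y_a^\vph;\Z),
\]
together with the duality identification of $TH_1(E_C^\vph, X_L^\vph \cup Y_a^\vph;\Z)$ with (a quotient of) $\Hom(TH_1(E_C^\vph, X_H^\vph \cup Y_b^\vph;\Z),\Q/\Z)$ and the observation, via Lemma~\ref{lemma:Zhomequiv_implies_torsion} applied to $X_H \to E_C$, that the image of $TH_1(X_L^\vph,Y_a^\vph) \to TH_1(E_C^\vph, Y_a^\vph)$ has order equal to the square root of $|TH_1(X_L^\vph,Y_a^\vph)|$ after matching up the two ends; the Hopf link end contributes trivially because $X_H^\vph$ has the homology of a torus, so $TH_1(X_H^\vph, Y_a^\vph;\Z) = 0$ for admissible $\vph$. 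I expect the main obstacle to be bookkeeping the various boundary pieces ($Y_a$ versus $Y_b$, the two link components, the $X_H$ end) correctly so that the duality pairings line up, and in particular verifying that the $X_H$-side relative torsion homology vanishes for \emph{admissible} $\vph$ — this is exactly where admissibility is used, and it is the point that makes the counting come out to precisely ``half''. The integrality/intersection-number computation for $P \subseteq P^\bot$ is routine once the duality setup is fixed.
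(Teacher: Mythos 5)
Your overall framework (Poincar\'e--Lefschetz duality on $E_C^\vph$ combined with Lemma \ref{lemma:Zhomequiv_implies_torsion}) is the right one, and your first inclusion $P\subseteq P^\bot$ via bounding relative $2$-chains in $E_C^\vph$ is the geometric shadow of what the paper does algebraically (commutativity of the left square in Lemma \ref{lem:forms}); you also correctly spot that admissibility is what forces $TH_1(X_H^\vph,Y_a^\vph)=0$, i.e.\ surjectivity of $H_1(Y_a^\vph)\to H_1(X_H^\vph)$, which in the paper is Lemma \ref{lem:diagram}.

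The genuine gap is in the reverse inclusion. Your counting step rests on the claim that the image of $TH_1(X^\vph,Y_a^\vph)\to TH_1(E_C^\vph,Y_a^\vph)$ has order $|TH_1(X^\vph,Y_a^\vph)|^{1/2}$; since $|P|\cdot|\im|=|TH_1(X^\vph,Y_a^\vph)|$, this claim is (given $P\subseteq P^\bot$ and non-singularity) literally equivalent to the proposition, so presenting it as an ``observation'' is circular unless it is actually derived. It cannot be derived from Lemma \ref{lemma:Zhomequiv_implies_torsion}, which only gives rational isomorphisms and hence says nothing about orders of torsion subgroups. What is needed is precisely the content of Lemma \ref{lem:forms}: duality isomorphisms $\vartheta_1,\vartheta_2$ relating $TH_1(E_C^\vph,Y_a^\vph)$ and $TH_2(E_C^\vph,X^\vph)$, which in turn require the nontrivial identifications $TH_2(E_C^\vph,X^\vph)\cong TH_2(E_C^\vph,\partial E_C^\vph\setminus Y_a^\vph)$ and $TH_1(E_C^\vph,Y_a^\vph)\cong TH_1(E_C^\vph,X_H^\vph)$ (Lemmas \ref{lem:isom1} and \ref{lem:isom2}, resting on Lemmas \ref{lem:diagram} and \ref{lem:isomsplit}); this is the bulk of the proof and exactly where admissibility and the $\Z$-homology equivalences are used, not routine bookkeeping. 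Moreover the one explicit duality you do state is off by a degree: on the $4$-manifold $E_C^\vph$, duality plus universal coefficients identifies $TH_1(E_C^\vph, X_L^\vph\cup Y_a^\vph)$ with $\Hom(TH_2(E_C^\vph, X_H^\vph\cup Y_b^\vph),\Q/\Z)$ (torsion of $H^3$ is $\Ext^1_\Z$ of $H_2$), not with the dual of a $TH_1$. Note finally that once the commutative diagram with exact rows of Lemma \ref{lem:forms} is established, both inclusions $P\subseteq P^\bot$ and $P^\bot\subseteq P$ follow by a short diagram chase, so no separate counting argument is needed.
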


Given an abelian group $G$ we write  $G^{\wedge} := \Hom_{\Z}(G,\Q/\Z)$. Note that $\lambda_L$ defines
a map $TH_1(X^\vph,Y_a^\vph;\Z)\to TH_1(X^\vph,Y_a^\vph;\Z)^\wedge$.
The main technical ingredient in the proof of Proposition \ref{prop:linkform} is the following lemma:

\begin{lemma}\label{lem:forms}
There are isomorphisms
\[\vartheta_1 \colon TH_1(E_C^\vph,Y_a^\vph;\Z) \to  TH_2(E_C^\vph,X^\vph;\Z)^\wedge \]
and
\[\vartheta_2 \colon TH_2(E_C^\vph,X^\vph;\Z) \to  TH_1(E_C^\vph,Y_a^\vph;\Z)^\wedge \]
such that the following diagram commutes:
\[\xymatrix{ TH_2(E_C^\vph,X^\vph) \ar[r]^{\partial} \ar[d]^{\vartheta_2}_{\cong} & TH_1(X^\vph,Y_a^\vph) \ar[r]^{i_*} \ar[d]^{\lambda}_{\cong} & TH_1(E_C^\vph,Y_a^\vph) \ar[d]^{\vartheta_1}_{\cong} \\
TH_1(E_C^\vph,Y_a^\vph)^{\wedge} \ar[r]^{i^{\wedge}}  & TH_1(X^\vph,Y_a^\vph)^{\wedge} \ar[r]^{\partial^{\wedge}} &  TH_2(E_C^\vph,X^\vph)^{\wedge}.
}\]
\end{lemma}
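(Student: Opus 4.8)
The plan is to construct the isomorphisms $\vartheta_1$ and $\vartheta_2$ using the Poincaré–Lefschetz duality of the pair $(E_C^\vph, \partial E_C^\vph)$, exactly as in the definition of $\lambda_L$ in Section \ref{section:defn_of_linking_forms}, and then check that the square commutes by naturality of all the maps involved. First I would record that $E_C^\vph$ is a compact oriented $4$-manifold with $\partial E_C^\vph = M_L^\vph \cup -M_H^\vph$ wait—more precisely its boundary decomposes using $X^\vph$, $Y_a^\vph$, $Y_b^\vph$ and pieces coming from $X_H^\vph$; the relevant decomposition is $\partial E_C^\vph = X^\vph \cup (Y_a^\vph \times I) \cup \ldots$, and Poincaré–Lefschetz duality gives $H_k(E_C^\vph, A) \cong H^{4-k}(E_C^\vph, B)$ for complementary pieces $A, B$ of the boundary. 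Taking $A = X^\vph$ and the complementary piece to be (the relevant part containing) $Y_a^\vph$, I get $H_2(E_C^\vph, X^\vph) \cong H^2(E_C^\vph, \favespace)$, and similarly $H_1(E_C^\vph, Y_a^\vph) \cong H^3(E_C^\vph, \text{complement})$. Then, just as in the proof of the first displayed claim in Section \ref{section:defn_of_linking_forms}, I apply the universal coefficient theorem: on the torsion subgroups, the relevant cohomology is $\Ext^1_\Z$ of a homology group one degree down, and $\Ext^1_\Z(G,\Z) \cong \Hom(G,\Q/\Z) = G^\wedge$ for finite abelian $G$. Combined with a "rotation" isomorphism of the boundary tori swapping $Y_a^\vph$ and $Y_b^\vph$ (the second claim in Section \ref{section:defn_of_linking_forms}, applied here in the concordance exterior rather than in $X_L$), this produces the two isomorphisms $\vartheta_1$ and $\vartheta_2$ onto the $\wedge$-duals.

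Next I would verify commutativity of the two squares. The top row is the homology exact sequence of the triple $(E_C^\vph, X^\vph, Y_a^\vph)$: the connecting map $\partial$ and the inclusion-induced map $i_*$, with $\lambda$ in the middle being the already-constructed linking form of $L$. The bottom row is obtained by applying $\Hom_\Z(-,\Q/\Z)$ to the same exact sequence (using $\Q/\Z$ injective, so that it stays exact), giving $\partial^\wedge$ and $i^\wedge$. The vertical maps are all instances of the same Poincaré–Lefschetz-plus-UCT duality recipe applied to nested pairs. Commutativity then follows from the naturality of Poincaré–Lefschetz duality with respect to inclusions of boundary pieces (i.e. compatibility of the cap product with $\partial$), together with the naturality of the universal coefficient isomorphism. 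This is a standard "duality is natural, so the ladder commutes" argument; I would set it up by writing the cohomology long exact sequence of the appropriate triple, identifying it term-by-term with the $\wedge$-dual of the homology sequence via UCT, and then invoking the compatibility of the fundamental class $[E_C^\vph, \partial E_C^\vph]$ capping with the boundary maps.

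The main obstacle I anticipate is bookkeeping with the boundary decomposition: $\partial E_C^\vph$ is not simply $X^\vph$ but contains the $Y_a^\vph \times I$, $Y_b^\vph \times I$ and $X_H^\vph$-pieces, so I need to be careful which subset of the boundary is "dual" to which, and to check that the $Y_a^\vph$ appearing in $TH_1(E_C^\vph, Y_a^\vph)$ matches up correctly under duality with the $Y_a^\vph$ (versus $Y_b^\vph$) appearing on the $X^\vph$ side. This is where the rotation/isotopy trick from the second claim of Section \ref{section:defn_of_linking_forms} gets reused: it guarantees that the choice of $Y_a$ versus $Y_b$ does not matter canonically, so the duality square can be made to close. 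A secondary point to check is that all the groups in sight really are the torsion subgroups of finitely generated abelian groups, so that $\Ext^1_\Z(-,\Z) \cong (-)^\wedge$ applies; this uses Lemma \ref{lemma:Zhomequiv_implies_torsion} to see that the free ranks are controlled (indeed that the relevant relative groups are torsion), since $X^\vph \hookrightarrow E_C^\vph$ is a $\Q$-homology equivalence. Once these identifications are pinned down, the commutativity is formal.
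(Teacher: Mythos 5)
Your overall framework (Poincar\'e--Lefschetz duality on $E_C^\vph$ plus the universal coefficient theorem and $\Ext^1_\Z(G,\Z)\cong G^\wedge$, with commutativity coming from naturality of duality) is the same starting point as the paper's, but there is a genuine gap at exactly the point you flag as ``bookkeeping'': duality does not produce the groups in the statement of the lemma, and the rotation/isotopy trick cannot close that gap. Concretely, duality plus UCT gives
$TH_1(E_C^\vph,Y_a^\vph)\cong \Hom(TH_2(E_C^\vph,\partial E_C^\vph\setminus Y_a^\vph),\Q/\Z)$ and
$TH_2(E_C^\vph,X^\vph)\cong \Hom(TH_1(E_C^\vph,X_H^\vph),\Q/\Z)$ (the face of $\partial E_C^\vph$ complementary to $X^\vph$ deformation retracts to $X_H^\vph$). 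To obtain $\vartheta_1$ and $\vartheta_2$ as stated one must additionally prove that the inclusion-induced maps
$TH_2(E_C^\vph,X^\vph)\to TH_2(E_C^\vph,\partial E_C^\vph\setminus Y_a^\vph)$ and
$TH_1(E_C^\vph,Y_a^\vph)\to TH_1(E_C^\vph,X_H^\vph)$ are isomorphisms. These are not formal: $\partial E_C^\vph\setminus Y_a^\vph = X^\vph\cup_{Y_b^\vph}X_H^\vph$ contains the entire Hopf-link-exterior cover at the other end of the concordance, so $X^\vph\hookrightarrow \partial E_C^\vph\setminus Y_a^\vph$ is not a homotopy equivalence, and $Y_a^\vph\hookrightarrow X_H^\vph$ (annuli into a cover of $T^2\times I$) is not one either; this is unlike the $Y_a$ versus $Y_b$ identification inside $\partial X_L$, where an actual isotopy of the boundary tori is available.

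These two identifications are the real content of the paper's proof (its Lemmas \ref{lem:isom1} and \ref{lem:isom2}), and they occupy most of the section: one needs the admissibility of $\vph$ (via Lemma \ref{lem:diagram}, e.g.\ surjectivity of $H_1(Y_b^\vph)\to H_1(X_H^\vph)$ and the splitting of $H_1(\ti Y_b)\to H_1(X^\vph)$), a Mayer--Vietoris analysis of $\partial E_C^\vph\setminus Y_a^\vph$ showing $TH_1(\partial E_C^\vph\setminus Y_a^\vph,X^\vph)=0$ and that $H_2(\partial E_C^\vph\setminus Y_a^\vph,X^\vph)\to H_2(E_C^\vph,X^\vph)$ is zero, and a splitting argument (Lemma \ref{lem:isomsplit}) together with $TH_1(X_H^\vph,Y_a^\vph)=0$ for the second isomorphism, with Lemma \ref{lemma:Zhomequiv_implies_torsion} supplying that $H_2(E_C^\vph,X^\vph)$ is torsion. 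Without supplying these arguments (or some substitute), your $\vartheta_1,\vartheta_2$ land in the wrong $\wedge$-duals and the diagram in the lemma cannot be assembled; note also that the admissibility hypothesis, which your sketch never uses, is essential here, so any argument that avoids it should be viewed with suspicion.
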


A standard argument shows how to deduce Proposition \ref{prop:linkform} from Lemma \ref{lem:forms}. We now provide the argument for the reader's convenience.

\begin{proof}[Proof of Proposition \ref{prop:linkform}]
First note that the long exact sequence in homology of the pair $(E_C^\vph,X^\vph)$ restricts to an exact sequence
\[TH_2(E_C^\vph,X^\vph) \to TH_1(X^\vph,Y_a^\vph) \to TH_1(E_C^\vph,Y_a^\vph)\]
 since $TH_2(E_C^\vph,X^\vph) = H_2(E_C^\vph,X^\vph)$ by Lemma \ref{lemma:Zhomequiv_implies_torsion}.  This implies that the rows in
the above commutative diagram are exact.
Defining
\[P := \ker(i_*\colon TH_1(X^\vph,Y_a^\vph) \to TH_1(E_C^{\vph},Y_a^\vph)),\]
as in the statement of the proposition, we will now show that $P = P^{\bot}$.

First, let $x\in P$.  By exactness there exists a $w \in TH_2(E_C^\vph,X^\vph)$ such that $\partial (w) = x$.  Therefore
\[\lambda(x) = \lambda\circ\partial (w) = i^{\wedge} \circ \vartheta_2(w) = \vartheta_2(w)\circ i_*,\]
with the second equality by commutativity of the left square.
If also $y \in P$, \[\lambda(x)(y) = \vartheta_2(w)(i_*(y)) = \vartheta_2(w)(0) = 0,\]
so that $x \in P^{\bot}$ and $P \subseteq P^{\bot}$.

Now let $y \in P^{\bot}$.   This implies
that $\partial^\wedge \circ \la(y)=0$. Since the lower sequence is also exact it follows
 that $\la(y)=i^\wedge(x)$ for some $x\in TH_1(E_C^\varphi,Y_a^\varphi)^\wedge$.
 Since the left square of the diagram commutes, we see that
 $y=\partial(\vartheta_2^{-1}(x))$ i.e. $y\in P$.
 This shows that  $P^{\bot} \subseteq P$.
\end{proof}

The proof of Lemma \ref{lem:forms} will require the remainder of this section, in which for brevity we omit the $\Z$ coefficients for the homology groups.

To define $\vartheta_1$ and $\vartheta_2$, we first use the analogue of the isomorphisms used to define $\lambda.$  For $\vartheta_1$ we have:
\[\ba{rcl}  TH_1(E_C^\vph,Y_a^\vph) &\xrightarrow{\simeq}& TH^3(E_C^\vph, \partial E_C^{\vph} \setminus Y_a^\vph) \xrightarrow{\simeq} \\ \Ext_\Z^1(TH_2(E_C^\vph,\partial E_C^{\vph} \setminus Y_a^\vph),\Z) &\xrightarrow{\simeq}& \Ext_\Z^0(TH_2(E_C^\vph,\partial E_C^{\vph} \setminus Y_a^\vph),\Q/\Z)\\
&\xrightarrow{\simeq}& \Hom(TH_2(E_C^\vph,\partial E_C^{\vph} \setminus Y_a^\vph),\Q/\Z). \ea \]
Similarly, for $\vartheta_2$, we have
\[\ba{rcl} \vartheta_2 \colon TH_2(E_C^\vph,X^\vph) &\xrightarrow{\simeq}& TH^2(E_C^\vph,X_H^\vph) \xrightarrow{\simeq} \\ \Ext_\Z^1(TH_1(E_C^\vph,X_H^\vph),\Z) &\xrightarrow{\simeq}& \Ext_\Z^0(TH_1(E_C^\vph,X_H^\vph),\Q/\Z)\\
 &\xrightarrow{\simeq}& \Hom(TH_1(E_C^\vph,X_H^\vph),\Q/\Z). \ea \]

To finish the construction of the non-singular relative linking pairings, we will prove the following two lemmas:

\begin{lemma} \label{lem:isom1}
The inclusion $X^\vph\to  \partial E_C^\vph \setminus Y_a^{\vph}$ induces an isomorphism:
\[ TH_2(E_C^\vph, X^\vph) \xrightarrow{\simeq} TH_2(E_C^\vph, \partial E_C^\vph \setminus Y_a^{\vph}).\]
\end{lemma}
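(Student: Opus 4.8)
The plan is to understand the space $\partial E_C^\vph \setminus Y_a^\vph$ well enough to compare it with $X^\vph$ via excision and the long exact sequences of pairs. First I would analyze $\partial E_C$ itself: since $C$ is a concordance from $L$ to $H$, the boundary $\partial E_C$ is built from $X_L$, $X_H$, and the concordance annuli exteriors $\partial \nu C \cong \bigsqcup_2 S^1 \times S^1 \times I$. Removing $Y_a$ (the two sub-annuli $\bigsqcup_2 S^1 \times D^1$ of the boundary tori of $X_L$ that are ``meridian $\times$ arc'') from $\partial E_C$ leaves a space that deformation retracts onto $X_L \cup_{\partial} X_H \cup_{\partial} (\text{annuli exteriors})$ with the $Y_a$ part of the $X_L$-torus deleted. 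Concretely, $\partial E_C^\vph \setminus Y_a^\vph$ is homotopy equivalent to a space glued from $X_L^\vph$, $X_H^\vph$, and the covers of $\bigsqcup_2 S^1 \times S^1 \times I$, where the gluing along the $X_L^\vph$ side only uses $Y_b^\vph$ and not $Y_a^\vph$. The key geometric observation is that the deleted region $Y_a^\vph$ lives only on the $X_L$-side of $\partial E_C$, so that $X^\vph = X_L^\vph$ sits inside $\partial E_C^\vph \setminus Y_a^\vph$ naturally.

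Next I would set up the map of long exact sequences of the pairs $(E_C^\vph, X^\vph)$ and $(E_C^\vph, \partial E_C^\vph \setminus Y_a^\vph)$, induced by the inclusion $X^\vph \hookrightarrow \partial E_C^\vph \setminus Y_a^\vph$. By the five-lemma it suffices to show that $H_*(\partial E_C^\vph \setminus Y_a^\vph, X^\vph;\Z)$ is concentrated in degrees that make the comparison of $TH_2$ work — ideally that this relative group vanishes rationally in the relevant degrees, and that the torsion subgroups are controlled. For this I would excise: $(\partial E_C^\vph \setminus Y_a^\vph, X^\vph)$ should excise to the pair built from the concordance-annulus exteriors $\bigsqcup_2 S^1 \times S^1 \times I$ together with $X_H^\vph$, relative to their intersection with $X_L^\vph$ (which is $Y_b^\vph$ and the relevant meridional tori). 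Since $S^1 \times S^1 \times I$ deformation retracts to $S^1 \times S^1$, and since $X_H^\vph = M_H^\vph$-piece is understood (the Hopf link exterior $X_H = S^1 \times S^1 \times I$, so $X_H^\vph$ is again a torus times interval or a cover thereof), these pieces have homology that is torsion-free or whose torsion is easy to identify. Here I would invoke Lemma \ref{lemma:Zhomequiv_implies_torsion}: the inclusions $X_L \to E_C$ and $X_H \to E_C$ are $\Z$-homology equivalences, so on finite $p$-covers they are $\Q$-homology equivalences, which forces the relevant relative rational homology groups to vanish and hence $H_2$ of both pairs to be all torsion, matching the statement's notation $TH_2(\cdot) = H_2(\cdot)$.

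I expect the main obstacle to be the careful bookkeeping of exactly which sub-tori and sub-annuli of $\partial E_C$ are deleted in $\partial E_C^\vph \setminus Y_a^\vph$, and checking that the excision genuinely identifies the difference pair with something whose homology is both torsion and, in degree $2$, maps isomorphically under the inclusion. In particular one must be sure that deleting $Y_a^\vph$ does not change $H_2$: the point is that $Y_a \simeq \bigsqcup_2 S^1$ is homotopically $1$-dimensional, so removing it (and a collar) from the $3$-manifold boundary affects only $H_0, H_1$ and, via the long exact sequence of $(\partial E_C^\vph, Y_a^\vph)$ or Alexander duality within $\partial E_C^\vph$, leaves $H_2$ and higher essentially intact up to torsion that dies after the pair $(E_C^\vph, -)$ is formed. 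Once the inclusion-induced map $H_2(E_C^\vph, X^\vph) \to H_2(E_C^\vph, \partial E_C^\vph \setminus Y_a^\vph)$ is shown to be an isomorphism, passing to torsion subgroups is immediate and completes the proof; but since Lemma \ref{lemma:Zhomequiv_implies_torsion} already tells us $H_2(E_C^\vph, X^\vph)$ is all torsion, and the same for the other pair, we really just need the integral isomorphism, which the five-lemma delivers from the vanishing/torsion statements about the excised pieces.
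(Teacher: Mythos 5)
There is a genuine gap. Your strategy is to compare the long exact sequences of the pairs $(E_C^\vph, X^\vph)$ and $(E_C^\vph, \fsn)$ and to win by showing that $H_*(\fsn,X^\vph;\Q)$ vanishes in the relevant degrees, so that the five-lemma gives an \emph{integral} isomorphism on $H_2$. But the relative group does not vanish rationally in degree $2$: since $\fsn$ decomposes (up to homotopy) as $X^\vph\cup_{Y_b^\vph}X_H^\vph$, excision gives $H_2(\fsn,X^\vph;\Q)\cong H_2(X_H^\vph,Y_b^\vph;\Q)$, and $X_H^\vph$ is again a torus times an interval, so this group contains $H_2(X_H^\vph;\Q)\cong\Q$. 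For the same reason your appeal to Lemma \ref{lemma:Zhomequiv_implies_torsion} for ``the other pair'' is not available: the inclusion $\partial E_C\setminus Y_a\to E_C$ is \emph{not} a $\Z$-homology equivalence (already $H_1(X_L\cup_{Y_b}X_H;\Z)\cong\Z^3$ while $H_1(E_C;\Z)\cong\Z^2$), so $H_2(E_C^\vph,\fsn)$ is in general not all torsion, and the inclusion-induced map $H_2(E_C^\vph,X^\vph)\to H_2(E_C^\vph,\fsn)$ is in general not an integral isomorphism. The lemma really is a statement about torsion subgroups only.

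What is actually needed — and what the paper proves — is the following pair of facts about the long exact sequence of the triple $(E_C^\vph,\fsn,X^\vph)$: (a) $TH_1(\fsn,X^\vph)=0$, and (b) the map $H_2(\fsn,X^\vph)\to H_2(E_C^\vph,X^\vph)$ is the \emph{zero} map. Together with $H_2(E_C^\vph,X^\vph)=TH_2(E_C^\vph,X^\vph)$ (which is the correct use of Lemma \ref{lemma:Zhomequiv_implies_torsion}), (a) and (b) give injectivity of $H_2(E_C^\vph,X^\vph)\to H_2(E_C^\vph,\fsn)$ with image exactly the torsion subgroup. Neither ingredient is in your plan, and neither is mere bookkeeping: (a) uses the admissibility of $\vph$ through Lemma \ref{lem:diagram} (surjectivity of $H_1(Y_b^\vph)\to H_1(X_H^\vph)$, so that $H_1(\fsn)/\im H_1(X^\vph)$ injects into the torsion-free $H_0(Y_b^\vph)$), and (b) is a nontrivial diagram chase using the surjectivity of $H_2\fs\to H_2(\fsn,X^\vph)$ (which rests on injectivity of $H_1(X^\vph)\to H_1\fs$, again from Lemma \ref{lem:diagram}), the containment $\ker(f)\subseteq\ker(g)$ for the maps out of $H_1(Y_b^\vph)$, and the fact that the image of $H_2(X_H^\vph)$ in $H_2(\partial E_C^\vph)$ is contained in the image of $H_2(X^\vph)$, hence dies in $H_2(\partial E_C^\vph,X^\vph)$. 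Without (b) in particular, the argument cannot be completed along the lines you propose.
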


\begin{lemma} \label{lem:isom2}
The inclusion $Y_a^\vph\to X_H^\vph$ induces an isomorphism:
\[ TH_1(E_C^\vph, Y_a^{\vph}) \xrightarrow{\simeq} TH_1(E_C^\vph, X_H^\vph).\]
\end{lemma}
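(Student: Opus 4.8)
The plan is to analyze the inclusion-induced map using the long exact sequence of the triple $(E_C^\vph, X_H^\vph, Y_a^\vph)$, together with excision and the known homology of $X_H^\vph$. First I would recall that $X_H = S^3 \setminus \nu H$ is a $T^2 \times I$ (the Hopf link exterior), so $X_H^\vph$ is a finite abelian cover of $T^2 \times I$; in fact, since $\vph$ is admissible, $X_H^\vph$ is again of the form $T^2 \times I$, and $Y_a^\vph \subset X_H^\vph$ is a disjoint union of annuli-times-intervals sitting in the boundary $\partial X_H^\vph$. The pair $(X_H^\vph, Y_a^\vph)$ has an easily computed homology: $Y_a^\vph \hookrightarrow X_H^\vph$ is a homotopy equivalence onto each component up to the splitting, so that the relative groups $H_*(X_H^\vph, Y_a^\vph)$ are torsion-free (indeed one should check $H_*(X_H^\vph, Y_a^\vph)$ has no $\Z$-torsion, using that both spaces have free homology and the long exact sequence of the pair). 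Consequently $TH_*(X_H^\vph, Y_a^\vph) = 0$.

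Second, I would feed this into the long exact sequence of the triple $(E_C^\vph, X_H^\vph, Y_a^\vph)$:
\[ \cdots \to H_1(X_H^\vph, Y_a^\vph) \to H_1(E_C^\vph, Y_a^\vph) \to H_1(E_C^\vph, X_H^\vph) \to H_0(X_H^\vph, Y_a^\vph) \to \cdots \]
Since $Y_a^\vph \to X_H^\vph$ induces an iso on $H_0$ (each boundary annulus piece lies in a single component and hits every component), the term $H_0(X_H^\vph, Y_a^\vph)$ vanishes, and $H_1(X_H^\vph, Y_a^\vph)$ is torsion-free; restricting to torsion subgroups then forces $TH_1(E_C^\vph, Y_a^\vph) \xrightarrow{\simeq} TH_1(E_C^\vph, X_H^\vph)$, which is exactly the claim. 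One subtlety is that the torsion subgroup functor is only left exact, so I would either argue directly that the connecting map $H_1(E_C^\vph, X_H^\vph) \to H_0(X_H^\vph, Y_a^\vph)$ is zero and that the cokernel of $H_1(X_H^\vph,Y_a^\vph) \to H_1(E_C^\vph, Y_a^\vph)$ already captures all torsion (using that $H_1(X_H^\vph, Y_a^\vph)$ is torsion-free so contributes nothing to the torsion of the middle term), or else phrase everything at the level of the full exact sequence and then pass to torsion at the end.

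The main obstacle I anticipate is verifying cleanly that $H_*(X_H^\vph, Y_a^\vph)$ is torsion-free and that $Y_a^\vph \to X_H^\vph$ is a $\pi_0$-bijection, i.e. pinning down the precise cell structure (or homotopy type) of the cover $X_H^\vph$ and the position of $Y_a^\vph$ inside it. This uses admissibility of $\vph$ crucially: the cover of $X_H \simeq T^2$ determined by an admissible $\vph \colon \Z^2 \to \Z_k \oplus \Z_l$ is again homotopy equivalent to $T^2$, and each meridional annulus $Y_a^i$ lifts to $k$ (resp.\ $l$) parallel annuli, each of which is a deformation retract of the corresponding boundary component — so the inclusion is a homotopy equivalence componentwise. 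Once this local picture is nailed down, the triple sequence argument is routine, mirroring the proof of Lemma \ref{lem:isom1} where the analogous role is played by the inclusion $X^\vph \to \partial E_C^\vph \setminus Y_a^\vph$.
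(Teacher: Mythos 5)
Your reduction of the problem to the exact sequence of the triple $(E_C^\vph, X_H^\vph, Y_a^\vph)$, together with the facts $H_0(X_H^\vph,Y_a^\vph)=0$ and $TH_1(X_H^\vph,Y_a^\vph)=0$, is the same skeleton as the paper's proof. But the step you flag as a "subtlety" and then wave through is a genuine gap. From the exact sequence
\[H_1(X_H^\vph,Y_a^\vph)\xrightarrow{\;\a\;} H_1(E_C^\vph,Y_a^\vph)\xrightarrow{\;\b\;} H_1(E_C^\vph,X_H^\vph)\to 0\]
the torsion-freeness of the left-hand group does \emph{not} imply that restricting $\b$ to torsion subgroups gives an isomorphism. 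For injectivity: the image of a torsion-free group can perfectly well contain torsion elements of the target (think of $\Z\to\Z/2$), so $\im\a\cap TH_1(E_C^\vph,Y_a^\vph)$ is not obviously zero. For surjectivity, which is the real issue: the cokernel of $\a$ can contain torsion that does not come from $TH_1(E_C^\vph,Y_a^\vph)$ at all — if the composite of $\a$ with the projection $H_1(E_C^\vph,Y_a^\vph)\to H_1(E_C^\vph,Y_a^\vph)/\mathrm{torsion}$ were, say, multiplication by $2$ on a $\Z$ summand, then $\coker\a$ would acquire a $\Z/2$ not hit by the torsion subgroup, and your claimed isomorphism would fail. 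Equal free ranks of source and target do not rule this out. So "the cokernel already captures all torsion because the first group is torsion-free" is not a valid argument, and nothing in your proposal analyses the pair $(E_C^\vph,Y_a^\vph)$, which is where the danger lies.

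This is precisely why the paper proves Lemma \ref{lem:isomsplit}: applied with $J=H$ it produces compatible splittings $H_1(X_H^\vph,Y_a^\vph)\cong TH_1(X_H^\vph,Y_a^\vph)\oplus F$ and $H_1(E_C^\vph,Y_a^\vph)\cong TH_1(E_C^\vph,Y_a^\vph)\oplus F$ under which $\a$ becomes $i_*\oplus\id_F$; since $TH_1(X_H^\vph,Y_a^\vph)=0$, the image of $\a$ is exactly the free complement $F$, so $\coker\a\cong TH_1(E_C^\vph,Y_a^\vph)$ and $\b$ identifies this with $H_1(E_C^\vph,X_H^\vph)=TH_1(E_C^\vph,X_H^\vph)$ (the last equality via Lemma \ref{lemma:Zhomequiv_implies_torsion}, another ingredient your sketch omits). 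The proof of Lemma \ref{lem:isomsplit} uses the concordance in an essential way, through the rational surjectivity of $H_1(Y_a^\vph)\to H_1(E_C^\vph)$ from Lemma \ref{lem:diagram}; your argument never uses any such input on the $E_C^\vph$ side, which is a sign something is missing. A small additional correction: the lifted annuli of $Y_a^\vph$ are not deformation retracts of the boundary tori of $X_H^\vph\cong T^2\times I$; what admissibility actually buys is that their cores represent $x^k$ and $y^l$, which generate $H_1(X_H^\vph)=\ker\vph$, giving the surjectivity of $H_1(Y_a^\vph)\to H_1(X_H^\vph)$ and hence $TH_1(X_H^\vph,Y_a^\vph)=0$ — that part of your outline is patchable, but the splitting argument is not optional.
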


Assuming Lemmas \ref{lem:isom1} and \ref{lem:isom2} we use these maps to finish the construction of $\vartheta_1$ and $\vartheta_2$.
It is straightforward to see that the forms $\vartheta_1,\vartheta_2$ and $\lambda$ fit into a commutative diagram as postulated in Lemma \ref{lem:forms}.  It thus remains to prove
Lemmas \ref{lem:isom1} and \ref{lem:isom2}.\\

We continue with the notation as above.
We denote the two components of $Y_b$ by $Y_{b1}$ and $Y_{b2}$.
For $j=1,2$ we pick one component $\ti{Y}_{bj}$ of the preimage of $Y_{bj}$ under the
covering map $Y_b^\varphi\to Y_b$. We write $\ti{Y}_b=\ti{Y}_{b1}\cup \ti{Y}_{b2}$.
Note that each component of $Y_b^\vph$ is isotopic in $\partial X$ to one of the two components of $\ti{Y}_b$. We thus obtain a canonical map $Y_b^\vph\to \ti{Y}_b$.

We now have the  following lemma:

\begin{lemma} \label{lem:diagram}
Let $\varphi$ be admissible. Consider the following diagram
\[ \xymatrix{
&H_1(E_C^\varphi)& \\
H_1(X^\varphi)\ar[ur] & H_1(Y_b^\varphi)\ar[d]\ar[l]\ar[r]\ar[u] & H_1(X_H^\varphi) \ar[ul] \\ & H_1(\ti{Y}_b).\ar[ul]\ar[ur]&}\]
Then the following hold:
\bn
\item the diagram commutes,
\item the bottom vertical map is surjective,
\item the bottom right diagonal map is an isomorphism,
\item all top diagonal maps are isomorphisms with rational coefficients,
\item the top vertical map is rationally surjective,
\item the bottom left diagonal map is injective and an isomorphism over $\Q$,
\item the bottom left diagonal map splits.
\en
\end{lemma}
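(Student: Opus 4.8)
The plan is to analyse the diagram one map at a time, using the admissibility hypothesis on $\vph$ together with the $\Z$-homology equivalences $X_L \to E_C$ and $X_H \to E_C$ (hence, by Lemma \ref{lemma:Zhomequiv_implies_torsion}, $\Q$-homology equivalences on the $\vph$-covers). Commutativity (1) is immediate since every arrow in the diagram is induced by an inclusion of spaces, and the relevant triangles of spaces commute up to the isotopies already discussed (each component of $Y_b^\vph$ is isotopic in $\partial X$ to a component of $\ti Y_b$, giving the canonical map $Y_b^\vph \to \ti Y_b$). For (3), the point is that since $\vph$ is admissible it is a direct sum $\Z \oplus \Z \to \Z_k \oplus \Z_l$, so the restriction of $\vph$ to the subgroup of $H_1(X_L)$ generated by a meridian of the $j$-th component (which is what $Y_{bj}$ carries) has image exactly the $j$-th cyclic factor; consequently the cover $\ti Y_{bj} \to Y_{bj}$ is the connected $|A_j|$-fold cover of $S^1 \times D^1$, i.e.\ $\ti Y_b$ is just a disjoint union of two solid tori, and the map $H_1(\ti Y_b) \to H_1(X_H^\vph)$ is an isomorphism because in the Hopf link exterior (a thickened torus) each boundary solid torus carries exactly one of the two $H_1 \cong \Z$ factors after passing to the cover --- here I would give the explicit description of $X_H^\vph = T^2 \times I$ and its $\vph$-cover to pin this down.

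For (2), surjectivity of $H_1(Y_b^\vph) \to H_1(\ti Y_b)$ follows because each component of $Y_b^\vph$ maps by a (possibly disconnected-cover-induced) degree-one-on-$H_1$ map onto the corresponding component of $\ti Y_b$: the core circle of any component of $Y_b^\vph$ maps to a generator of $H_1$ of its target solid torus. For (4), each of the four top diagonal maps is, after inverting the order of $A$, an instance of a $\Q$-homology equivalence: $H_1(X^\vph)\to H_1(E_C^\vph)$ and $H_1(X_H^\vph) \to H_1(E_C^\vph)$ are $\Q$-isomorphisms by Lemma \ref{lemma:Zhomequiv_implies_torsion}, and the two maps out of $H_1(\ti Y_b)$ land in these via (3) composed with the previous. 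Statement (5), rational surjectivity of $H_1(Y_b^\vph) \to H_1(E_C^\vph)$, then follows by combining (2), (3) and (4): the composite $H_1(Y_b^\vph) \to H_1(\ti Y_b) \xrightarrow{\cong} H_1(X_H^\vph) \xrightarrow{\cong_\Q} H_1(E_C^\vph)$ is rationally surjective, and by (1) it factors through $H_1(Y_b^\vph) \to H_1(E_C^\vph)$.

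For (6), that $H_1(Y_b^\vph) \to H_1(X^\vph)$ is injective and a $\Q$-isomorphism: rational surjectivity is the analogue of (5) using $X$ in place of $E_C$ (or follows from (5) and (4)), and a rank count via the multivariable Alexander polynomial formula of the appendix --- or a direct Mayer--Vietoris computation of $H_1(X^\vph;\Q)$ from the decomposition of $X_L$ --- shows the two sides have equal rank; injectivity on the nose then needs the observation that $H_1(Y_b^\vph)$ is torsion-free (it is a disjoint union of circles and tori) so that a rationally injective map from it is injective. Finally (7), that $H_1(Y_b^\vph) \to H_1(X^\vph)$ splits: since $H_1(Y_b^\vph)$ is free abelian, any injection of it whose image is a direct summand splits, so it suffices to know the image is a summand; this I would get from the fact that the cokernel $H_1(X^\vph, Y_b^\vph)$ --- equivalently, by the isotopy argument in Section \ref{section:defn_of_linking_forms}, $H_1(X^\vph, Y_a^\vph)$ --- has free part complementary to the image, or more cleanly by exhibiting an explicit retraction $X^\vph \to Y_b^\vph$-up-to-homology using the meridional structure.

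The main obstacle I anticipate is statement (3) (equivalently the clean identification of $\ti Y_b$ and of $X_H^\vph$), because it is exactly where admissibility is used essentially: without the direct-sum hypothesis on $\vph$ the cover of a boundary torus need not split as two solid tori and the map to $H_1(X_H^\vph)$ can fail to be an isomorphism. Once (3) is in hand, statements (4)--(7) are bookkeeping with $\Q$-homology equivalences, torsion-freeness, and rank counts; (1) and (2) are essentially formal.
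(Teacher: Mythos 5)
Your handling of (1)--(5) essentially reproduces the paper's argument: you correctly locate the use of admissibility in (3), where $\ker\vph=\Z x^m\oplus\Z y^n$ must coincide with the image of $H_1(\ti{Y}_b)$ in $H_1(X_H^\vph)$ (note, incidentally, that the pieces of $\ti{Y}_b$ are annuli, not solid tori), and (4), (5) are the intended applications of Lemma \ref{lemma:Zhomequiv_implies_torsion} and commutativity. However, in (6) and (7) you argue about the wrong map: these items concern the bottom left diagonal $H_1(\ti{Y}_b)\to H_1(X^\vph)$, whereas you consistently discuss the horizontal map $H_1(Y_b^\vph)\to H_1(X^\vph)$. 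For the latter the claims are false in general: $Y_b^\vph$ has $|A|/m+|A|/n$ annulus components (four already for $A=\Z_2\oplus\Z_2$), so $H_1(Y_b^\vph)$ has rank bigger than $2$, while $H_1(X^\vph;\Q)$ has rank $2$; such a map cannot be injective, let alone split. Reading $\ti{Y}_b$ in place of $Y_b^\vph$, the $\Q$-isomorphism part of (6) does go through, as in the paper, by chasing both ways around the diamond using (3) and (4); but your proposed rank count via the appendix formula is not a substitute, since that formula controls the rank only when the relevant Alexander polynomial evaluations are non-zero, which is not known a priori.

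The genuine gap is (7). Reducing to ``the image of $H_1(\ti{Y}_b)$ is a direct summand of $H_1(X^\vph)$'' is fine, but neither of your suggested justifications proves it. The natural sufficient criterion --- torsion-free cokernel --- fails here: the image is free of rank $2$, so $TH_1(X^\vph)$ injects into the cokernel of $H_1(\ti{Y}_b)\to H_1(X^\vph)$, and this torsion is typically non-trivial ($\Z_9$ in the paper's example); and the assertion that ``the cokernel has free part complementary to the image'' is essentially a restatement of what must be proved (it is the content of Corollary \ref{cor:h1yasplits}, which is deduced from (7), so cannot be invoked). What is needed, and what the paper supplies, is an explicit left inverse: compare the exact sequences $0\to H_1(\ti{Y}_b)\to H_1(Y_b)\to A\to 0$ and $H_1(X^\vph)\to H_1(X)\to A\to 0$, with $H_1(Y_b)\toiso H_1(X)$. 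The image of $H_1(X^\vph)$ in $H_1(X)$ is $\ker\vph$, and admissibility says precisely that $H_1(\ti{Y}_b)\to H_1(Y_b)$ maps isomorphically onto $\ker\vph$; composing $H_1(X^\vph)\to\ker\vph$ with the inverse of this isomorphism retracts $H_1(X^\vph)$ onto $H_1(\ti{Y}_b)$. Your phrase ``explicit retraction using the meridional structure'' gestures at this, but the construction --- and with it the second essential use of admissibility --- is missing from your proposal.
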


\begin{proof}[Proof of Lemma \ref{lem:diagram}]
It is evident that the diagram commutes and that the bottom vertical map is surjective (this is (1) and (2)).
Now recall that $X_H$ is homeomorphic to a torus times an interval $S^1 \times S^1 \times I$. We denote the two meridians of $H$ by $x$ and $y$. Recall that we identified $H_1(X_H)$ with $\Z^2$ such that $x$ corresponds to $(1,0)$ and $y$ corresponds to $(0,1)$.
Let $\varphi\colon H_1(X_H) \to A$ be any epimorphism onto a finite abelian group. We denote by $m$ and $n$ the orders of $\varphi(x)\in A$ and $\varphi(y)\in A$.
Lifting to the covering space induced by $\varphi$, and then considering $H_1(X_H^\vph)$ as a subgroup of $H_1(X_H)$, we see that
\[ \im(H_1(\ti{Y}_b)\to H_1(X_H^\varphi) \to H_1(X_H))=\Z x^m \oplus \Z y^n.\]
On the other hand we have $\im(H_1(X_H^\varphi) \to H_1(X_H)) = \ker(\vph \colon H_1(X_H) = \Z^2\to A)$.
It is straightforward to verify that $\im(H_1(\ti{Y}_b)\to H_1(X_H^\varphi))=H_1(X_H^\varphi)$ if and only if
\[ \ker(\varphi\colon \Z^2\to A)=\ker(\varphi\colon \ll x\rr \to A)\oplus \ker(\varphi\colon \ll y\rr \to A).\]
The latter condition in turn is satisfied if and only if $\varphi$ is admissible.
This shows that the bottom right diagonal map is an isomorphism, which is (3).

Note that by Lemma \ref{lemma:Zhomequiv_implies_torsion} the top diagonal maps are isomorphisms with rational coefficients, which is (4).  Since the bottom right diagonal map is an isomorphism over the integers, it follows by commutativity, going both ways around the diamond, that the bottom left diagonal map is also an isomorphism over the rationals, which is part of (6).  It also follows by commutativity, (2), (3) and (4), that the top vertical map is surjective over the rationals, which is (5).  Note note that  $H_1(\ti{Y}_b)$ is torsion-free.  It follows that the bottom left diagonal map is injective, which shows (6).

Finally, consider the following
 commutative diagram of exact sequences
\[ \xymatrix{0\ar[r]& H_1(\widetilde{Y}_b^\varphi)\ar[r]\ar[d] & H_1(Y_b)\ar[r]\ar[d]^= & A\ar[d]^=\ar[r]&0\\
  & H_1(X^\varphi)\ar[r] & H_1(X)\ar[r] & A\ar[r]&0.}\]
Note that the right two vertical maps are isomorphisms. It is a straightforward diagram chase to see that the left hand map splits, which is (7).
\end{proof}

For future reference we record the following corollary to Lemma \ref{lem:diagram}.

\begin{corollary}\label{cor:h1yasplits}
Let $\varphi$ be admissible. Then we have an isomorphism
\[ TH_1(X^\varphi,Y_b^\varphi)\cong TH_1(X^\varphi).\]
\end{corollary}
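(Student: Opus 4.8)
The plan is to extract the statement directly from the long exact sequence in homology of the pair $(X^\varphi, Y_b^\varphi)$, feeding in Lemma \ref{lem:diagram}. Write the relevant piece of that sequence as
\[ H_1(Y_b^\varphi) \xrightarrow{j} H_1(X^\varphi) \xrightarrow{\pi} H_1(X^\varphi, Y_b^\varphi) \xrightarrow{\partial} H_0(Y_b^\varphi). \]
Each component of $Y_b^\varphi$ is homotopy equivalent to a circle, so $H_0(Y_b^\varphi)$ is free abelian; hence every torsion element of $H_1(X^\varphi, Y_b^\varphi)$ is killed by $\partial$, i.e. $TH_1(X^\varphi, Y_b^\varphi) \subseteq \ker \partial = \im \pi$. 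So it will suffice to show that $\pi$ restricts to an isomorphism from $TH_1(X^\varphi)$ onto the torsion subgroup of $\im \pi$.

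The key point is to identify $\ker \pi = \im j$. By commutativity of the diagram of Lemma \ref{lem:diagram} (part (1)), the map $j$ factors as the composite of the bottom vertical map $H_1(Y_b^\varphi) \to H_1(\ti Y_b)$ with the bottom-left diagonal map $H_1(\ti Y_b) \to H_1(X^\varphi)$. The first of these is surjective (part (2)), so $\im j$ equals the image of the bottom-left diagonal map. By parts (6) and (7) this diagonal map is an injection which splits; therefore $\im j$ is a direct summand of $H_1(X^\varphi)$, and since $H_1(\ti Y_b) \cong \Z^2$ is torsion-free, $\im j$ is a \emph{torsion-free} direct summand of $H_1(X^\varphi)$.

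Now choose a complement, $H_1(X^\varphi) = \im j \oplus K$. Because $\im j$ is torsion-free, all torsion of $H_1(X^\varphi)$ lies in $K$, so $TH_1(X^\varphi) = TK$. As $\ker \pi = \im j$, the projection $\pi$ restricts to an isomorphism $K \xrightarrow{\cong} \im \pi$, hence to an isomorphism $TH_1(X^\varphi) = TK \xrightarrow{\cong} T(\im \pi)$. Finally $T(\im\pi) = TH_1(X^\varphi, Y_b^\varphi)$ by the first paragraph, so $\pi$ induces the desired isomorphism $TH_1(X^\varphi) \xrightarrow{\cong} TH_1(X^\varphi, Y_b^\varphi)$.

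I do not expect a genuine obstacle here: the argument is formal once Lemma \ref{lem:diagram} is available. The one point that must be handled with care is that $\im j$ \emph{equals} (not merely is contained in) the image of the split injection $H_1(\ti Y_b) \to H_1(X^\varphi)$; this uses the surjectivity asserted in part (2), which is precisely where admissibility of $\varphi$ enters (via parts (2), (6) and (7) of the lemma). Everything else is routine bookkeeping with torsion subgroups of finitely generated abelian groups.
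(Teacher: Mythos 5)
Your proof is correct and follows essentially the same route as the paper: both arguments hinge on Lemma \ref{lem:diagram}, in particular the split injection $H_1(\widetilde{Y}_b)\to H_1(X^\varphi)$ from (7), combined with (1) and (2) to identify the image of $H_1(Y_b^\varphi)$ in $H_1(X^\varphi)$ with this torsion-free direct summand. The only difference is packaging: the paper routes the argument through the intermediate group $TH_1(X^\varphi,\widetilde{Y}_b)$, showing it is isomorphic first to $TH_1(X^\varphi)$ and then to $TH_1(X^\varphi,Y_b^\varphi)$ by comparing the long exact sequences of the two pairs, whereas you carry out the same bookkeeping directly in the long exact sequence of $(X^\varphi,Y_b^\varphi)$.
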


\begin{proof}
By Lemma \ref{lem:diagram} (7) the inclusion induced map $H_1(\widetilde{Y}_b^\varphi)\to H_1(X^\varphi)$ admits a left inverse.  It follows that
\[  TH_1(X^\varphi,\widetilde{Y}_b^\varphi)\cong TH_1(X^\varphi).\]
On the other hand it follows easily from comparing the long exact sequences of the pairs $(X^\varphi,Y_b^\varphi)$ and $(X^\varphi,\widetilde{Y}_b^\varphi)$ that
\[  TH_1(X^\varphi,\widetilde{Y}_b^\varphi)\cong TH_1(X^\varphi,{Y}_b^\varphi).\]
\end{proof}

\begin{proof}[Proof of Lemma \ref{lem:isom1}]
The map which we claim is an isomorphism arises from restricting to the torsion part in the long exact sequence of the triple $(E_C^\vph, \favespacen, X^\vph\setminus Y_a^\vph \simeq X^\vph)$:
\[\ba{cccccccccc} &&H_2(\favespacen,X^\vph) &\to & H_2(E_C^\vph, X^{\vph})& \to& H_2(E_C^\vph, \favespacen)\\
 &\to& H_1(\favespacen,X^\vph). &\ea\]
By Lemma \ref{lemma:Zhomequiv_implies_torsion}, we have that
\[H_2(E_C^\vph, X^\vph) \cong TH_2(E_C^\vph, X^\vph).\]
It therefore suffices to show  that
\[TH_1(\favespacen,X^\vph) \cong 0\]
and that
\[ H_2(\favespacen,X^\vph) \to H_2(E_C^\vph, X^{\vph})\]
is the zero map.

We first consider the following  Mayer-Vietoris sequence to investigate the homology of $\partial E_C^\vph \setminus Y_a^\vph$, decomposing it as $X^\vph \cup_{Y_b^\vph} X_H^{\varphi}$.
\[
H_1(Y_b^\vph) \to  H_1(X^\vph) \oplus H_1(X_H^\vph) \to  H_1\fs \to
H_0(Y_b^\vph)\to \dots \]
We write $I := \im\big(H_1(X^\vph) \to H_1\fs \big)$.
By Lemma \ref{lem:diagram} (1),(2) and (3), the map $H_1(Y_b^\vph) \to H_1(X_H^\vph)$  is surjective. It then follows from the above exact sequence
that
\[I=\im\big( H_1(X^\vph) \oplus H_1(X_H^\vph) \to  H_1\fs\big).\]
In particular, we get an inclusion
\[ 0 \to  H_1\fs/I \to H_0(Y_b^\vph).\]
Since $H_0(Y_b^\vph)$ is torsion--free, it follows that $H_1\fs/I$ is also torsion-free.
On the other hand the pair $(\partial E_C^\vph\sm Y_a^\vph,X^\varphi)$ gives rise to the following exact sequence
\[ 0 \to H_1\fs/I \to H_1(\fsn,X^\vph) \to H_0(X^\vph).\]
Since any subgroup of $H_0(X^\vph)$ is torsion--free it now follows that
\\$H_1(\fsn,X^\vph)$ is torsion-free.
Put differently,
 \[TH_1(\fsn,X^{\vph}) \cong 0,\] as required.

In order to complete the proof of the lemma, we still have to show that
\[H_2(\favespacen,X^\vph) \to H_2(E_C^\vph, X^{\vph})\]
is the zero map.
The following commutative diagram will guide us through the remainder of the proof:
\[\xymatrix{
0\ar[d] \ar[r] &  H_2((\partial X^1)^\vph) \oplus H_2((\partial X^2)^\vph) \ar[d]\\
H_2(X_H^\vph) \oplus H_2(X^\vph) \ar@{=}[r] \ar[d] & H_2(X_H^\vph) \oplus H_2(X^\vph) \ar[d]\\
 H_2\fs \ar[r] \ar[d]^\partial & H_2(\partial E_C^\vph) \ar[d]\ar[r]& H_2(\partial E_C^\varphi,X^\varphi)\\
 H_1(Y_b^\vph) \ar[r]^-{g} \ar[d]^-{f} & H_1((\partial X^1)^\vph) \oplus H_1((\partial X^2)^\vph) \ar[d]\\
H_1(X_H^\vph) \oplus H_1(X^\vph) \ar[d] \ar@{=}[r] & H_1(X_H^\vph) \oplus H_1(X^\vph)\\
H_1(\fs)& }\]
Note that the two vertical sequences are exact.
We start out with the following claim:

\begin{claim}
The map
\[H_2\fs \to H_2(\fsn,X^\vph)\]
is surjective.
\end{claim}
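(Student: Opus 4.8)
The plan is to exploit the commutative diagram displayed just before the claim, whose two vertical columns are pieces of the Mayer--Vietoris sequences for the decompositions $\partial E_C^\vph = X^\vph \cup_{Y_b^\vph} X_H^\vph$ and $\partial X_L^\vph = (\partial X^1)^\vph \sqcup (\partial X^2)^\vph$ written as $Y_a^\vph \cup_{\text{(circles)}} Y_b^\vph$ glued appropriately. Since both columns are exact, surjectivity of $H_2\fs \to H_2(\fsn, X^\vph)$ will follow from a diagram chase once I control the relevant connecting maps; the key point to isolate is that the map $g \colon H_1(Y_b^\vph) \to H_1((\partial X^1)^\vph) \oplus H_1((\partial X^2)^\vph)$ is injective.

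First I would observe that the long exact sequence of the pair $(\fsn, X^\vph)$ identifies $H_2(\fsn, X^\vph)$ with the cokernel of $H_2(X^\vph) \to H_2\fs$ together with a subgroup of $H_1(X^\vph)$; more precisely, combined with the left-hand column of the diagram, an element of $H_2(\fsn,X^\vph)$ is represented by a class in $H_2\fs$ exactly when its image under $\partial \colon H_2\fs \to H_1(Y_b^\vph)$ lands in the image of $H_2(\fsn) \to \dots$, equivalently when it dies in $H_1(\fsn, X^\vph)$. Then I would use the portion of the left column reading $H_2\fs \xrightarrow{\partial} H_1(Y_b^\vph) \xrightarrow{f} H_1(X_H^\vph)\oplus H_1(X^\vph)$ and the middle column $H_2(\partial E_C^\vph) \to H_1((\partial X^1)^\vph)\oplus H_1((\partial X^2)^\vph) \xrightarrow{} H_1(X_H^\vph)\oplus H_1(X^\vph)$, chasing a class in $H_2(\fsn,X^\vph)$ up to $H_2(\partial E_C^\vph, X^\vph)$ and back; commutativity forces any class to come from $H_2\fs$ provided the relevant kernels match up.

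The step I expect to be the main obstacle is verifying that $g\colon H_1(Y_b^\vph) \to H_1((\partial X^1)^\vph)\oplus H_1((\partial X^2)^\vph)$ is injective (or at least that $\ker g$ maps trivially in the way needed). This is where admissibility of $\vph$ and Lemma \ref{lem:diagram} must enter: each component of $Y_b^\vph$ is an annulus $S^1\times D^1$ inside the torus $(\partial X^i)^\vph$, and its core circle is a power of a meridian; admissibility guarantees that the covering $(\partial X^i)^\vph \to \partial X^i$ restricts on $Y_b^i$ to the ``obvious'' cover, so that $H_1(Y_b^\vph)\to H_1(\partial X_L^\vph)$ is injective with image a direct summand. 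I would package this by noting that $H_1(Y_b^\vph)$ is free abelian and the composite $H_1(Y_b^\vph) \to H_1((\partial X^1)^\vph)\oplus H_1((\partial X^2)^\vph) \to H_1(X_H^\vph)\oplus H_1(X^\vph)$ has controlled kernel by Lemma \ref{lem:diagram}(2),(3),(7), and the remaining discrepancy is visible on $H_1$ of the circles $S^1\times S^0$. Once $g$ is understood, the surjectivity of $H_2\fs \to H_2(\fsn,X^\vph)$ drops out of the chase, and one also reads off that the cokernel in question injects appropriately into $H_2(\partial E_C^\vph, X^\vph)$, which is exactly what is needed afterwards to show the map $H_2(\fsn,X^\vph)\to H_2(E_C^\vph,X^\vph)$ is zero.
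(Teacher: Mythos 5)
Your reduction is pointed in the right direction---surjectivity of $H_2\fs \to H_2(\fsn,X^\vph)$ is, via the long exact sequence of the pair, equivalent to injectivity of $H_1(X^\vph)\to H_1\fs$---but the step you isolate as the key one is false. The map $g\colon H_1(Y_b^\vph) \to H_1((\partial X^1)^\vph)\oplus H_1((\partial X^2)^\vph)$ is not injective in general. Since $\lk(L_1,L_2)=1$, each boundary torus $\partial X^i$ carries a full rank-two subgroup of $H_1(X_L;\Z)$, so $(\partial X^i)^\vph$ is a \emph{single} torus covering $\partial X^i$ with degree $|A|$; on the other hand the preimage of $Y_b^i$ consists of $|A|/\mathrm{ord}(\vph(\mu_i))$ parallel annuli inside that one torus, and their core circles (related by deck translations of the torus) are all homologous there. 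Hence $g$ kills the differences of the corresponding generators of $H_1(Y_b^\vph)$: already for the admissible epimorphism $\vph\colon \Z^2\to\Z_2\oplus\Z_2$ used in Section \ref{section:examples}, $\ker(g)$ has rank $2$. Your assertion that admissibility makes $H_1(Y_b^\vph)\to H_1(\partial X_L^\vph)$ injective with direct summand image conflates injectivity on each annulus component with injectivity on the direct sum. Moreover, even if $g$ were injective it would not give what the Mayer--Vietoris chase needs: the map $H_1(Y_b^\vph)\to H_1(X_H^\vph)$ factors through $g$ followed by a map identifying the two torus summands (each maps isomorphically onto $H_1(X_H^\vph)$), so its kernel is strictly larger than $\ker(g)$, and the whole issue is to show that this larger kernel also dies in $H_1(X^\vph)$; your hedge ``$\ker g$ maps trivially in the way needed'' is never made precise and is not the relevant condition.

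The actual crux is the containment $\ker\bigl(H_1(Y_b^\vph)\to H_1(X_H^\vph)\bigr) \subseteq \ker\bigl(H_1(Y_b^\vph)\to H_1(X^\vph)\bigr)$, and this is exactly what the paper extracts from Lemma \ref{lem:diagram}: both maps factor through $H_1(Y_b^\vph)\to H_1(\ti{Y}_b)$ by part (1), the map $H_1(\ti{Y}_b)\to H_1(X_H^\vph)$ is an isomorphism by part (3), and $H_1(\ti{Y}_b)\to H_1(X^\vph)$ is injective by part (6), so both kernels coincide with $\ker(H_1(Y_b^\vph)\to H_1(\ti{Y}_b))$. Given this, exactness of the left-hand Mayer--Vietoris column (together with surjectivity of $H_1(Y_b^\vph)\to H_1(X_H^\vph)$) shows $H_1(X^\vph)\to H_1\fs$ is injective, and the long exact sequence of $(\fsn,X^\vph)$ yields the claim. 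Note finally that $g$ enters the paper's argument only in the later, separate step showing that $H_2\fs\to H_2(\partial E_C^\vph,X^\vph)$ vanishes, and there the relevant fact is the containment $\ker(f)\subseteq\ker(g)$, not injectivity of $g$; so your proposal both relies on a false statement and misplaces where that map is needed.
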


Note that by Lemma \ref{lem:diagram} (1),(3) and (6) we know that
\[\ker(H_1(Y_b^\vph) \to H_1(X_H^\vph)) = \ker(H_1(Y_b^\vph)\to H_1(\ti{Y}_b)) =\]\[\ker(H_1(Y_b^\vph) \to H_1(X^\vph)). \]
Using the left vertical Mayer-Vietoris sequence in the above diagram and the fact that the map \[H_1(Y_b^\vph) \to H_1(X_H^\vph)\] is surjective, by Lemma \ref{lem:diagram} (1),(2) and (3), this implies that the inclusion induced map
\[ H_1(X^\varphi)\to H_1\fs \]
is injective. The claim follows from considering the long exact sequence in homology of the pair $(\fsn,X^\varphi)$.

By the claim, we just have to show that the composition
\[H_2\fs \to H_2(\fsn,X^\vph) \to H_2(E_C^\vph, X^\vph) \]
is the zero map.  Moreover, the map $H_2(\fsn,X^\vph) \to H_2(E_C^\vph, X^\vph)$ factors as
\[H_2(\fsn,X^\vph) \to H_2(\partial E_C^\vph,X^\vph) \to H_2(E_C^\vph, X^\vph),\]
so it suffices to show that the map
\[H_2(\fsn) \to H_2(\partial E_C^\vph,X^\vph)\]
is the zero map.

First note that the maps $H_i(\partial (X^j)^\varphi)\to H_i(X_H^\varphi), j=1,2$
in the above commutative diagram are induced by homotopy equivalences; in particular they are isomorphisms.
It follows from the second vertical exact sequence that
\[ \im\big(H_2(X_H^\varphi)\to H_2(\partial E_C^\varphi)\big)\subset  \im\big(H_2(X^\varphi)\to H_2(\partial E_C^\varphi)\big).\]
The fact that $H_2(X^\vph)$ maps to zero in $H_2(\partial E_C^\varphi,X^\varphi)$ implies that the map
\[H_2(X_H^\vph) \oplus H_2(X^\vph) \to H_2(\partial E_C^\vph)\to  H_2(\partial E_C^\varphi,X^\varphi)\]
is the zero map.

%

Recall that there are maps labelled as $f$ and $g$ in the preceding large commutative diagram.

\begin{claim}
\[\ker(f) \subseteq \ker(g).\]
\end{claim}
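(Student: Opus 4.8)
The plan is to identify the two maps explicitly and then observe that $g$, exactly like the inclusion-induced map $H_1(Y_b^\vph) \to H_1(X^\vph)$, factors on $H_1$ through the canonical collapse map $\kappa \colon Y_b^\vph \to \ti Y_b$; the claim then falls out of the triple equality of kernels recorded just before it.

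In detail: the map $f$ is the Mayer--Vietoris homomorphism of the left-hand column, for the decomposition $\fsn = X^\vph \cup_{Y_b^\vph} X_H^\vph$, built from the two inclusion-induced maps $(j_X)_* \colon H_1(Y_b^\vph) \to H_1(X^\vph)$ and $(j_H)_* \colon H_1(Y_b^\vph) \to H_1(X_H^\vph)$, so that $\ker f = \ker (j_X)_* \cap \ker (j_H)_*$. Combined with the equalities recorded just above the claim (a consequence of Lemma \ref{lem:diagram}\,(1),(3),(6)),
\[\ker\big((j_H)_* \colon H_1(Y_b^\vph) \to H_1(X_H^\vph)\big) = \ker\big(\kappa_* \colon H_1(Y_b^\vph) \to H_1(\ti Y_b)\big) = \ker\big((j_X)_* \colon H_1(Y_b^\vph) \to H_1(X^\vph)\big),\]
this yields $\ker f = \ker \kappa_*$. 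Meanwhile $g \colon H_1(Y_b^\vph) \to H_1((\partial X^1)^\vph) \oplus H_1((\partial X^2)^\vph) = H_1(\partial X_L^\vph)$ is induced by the inclusion $Y_b^\vph \hookrightarrow \partial X_L^\vph$.

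Thus the claim reduces to showing $\ker\kappa_* \subseteq \ker g$, and for this it suffices to show that $g$ factors on $H_1$ as $H_1(Y_b^\vph) \xrightarrow{\kappa_*} H_1(\ti Y_b) \hookrightarrow H_1(\partial X_L^\vph)$. I would verify this one component at a time. A component $C$ of $Y_b^\vph$ lies over one of the two annuli $Y_{bj}$; projecting an isotopy down to $\partial X_L$ shows that the component of $\ti Y_b$ to which $C$ is isotopic inside $\partial X_L^\vph$ is necessarily $\ti Y_{bj}$, and that this isotopy — and hence the induced homotopy from the inclusion $C \hookrightarrow \partial X_L^\vph$ to $(\ti Y_{bj} \hookrightarrow \partial X_L^\vph)\circ \kappa|_C$ — stays inside the component of $\partial X_L^\vph$ containing both $C$ and $\ti Y_{bj}$. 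Hence $[C]$ and $[\ti Y_{bj}]$ have the same image in $H_1(\partial X_L^\vph)$, which is precisely the required factorization; therefore $\ker f = \ker \kappa_* \subseteq \ker g$.

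The argument is essentially diagram bookkeeping with the Mayer--Vietoris sequences of the large commutative diagram, together with the triple kernel equality; the one point needing care is the observation that, just like $(j_X)_*$, the map $g$ factors through the collapse $\kappa_*$. This relies on the already-noted fact that each component of $Y_b^\vph$ is isotopic in $\partial X_L^\vph$ to the corresponding reference lift in $\ti Y_b$ — which holds because the lifts of a meridional annulus to the torus $(\partial X^j)^\vph$ are mutually parallel, $(\partial X^j)^\vph$ being connected since $\lk(L) = 1$. Beyond that I foresee no real obstacle.
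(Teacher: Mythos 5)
Your proof is correct, but it runs along a somewhat different track than the one in the paper. The paper's argument is the short one: from $x\in\ker(f)$ it only extracts that $x$ dies in $H_1(X_H^\vph)$, and then it inverts the inclusion-induced isomorphisms $H_1((\partial X^i)^\vph)\to H_1(X_H^\vph)$ (using that $X_H^\vph$ is a product $S^1\times S^1\times I$) together with commutativity of the bottom square to conclude $g(x)=0$. You instead first identify $\ker(f)$ as $\ker\big(\kappa_*\colon H_1(Y_b^\vph)\to H_1(\ti{Y}_b)\big)$ via the triple kernel equality displayed just before the claim (Lemma \ref{lem:diagram} (1),(3),(6)), and then show that $g$ itself factors through $\kappa_*$ on $H_1$, by the same isotopy-of-annuli argument that underlies the commutativity statement in Lemma \ref{lem:diagram} (1). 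What your route buys is that it deals explicitly with the component structure: the map $H_1((\partial X^1)^\vph)\oplus H_1((\partial X^2)^\vph)\to H_1(X_H^\vph)$ appearing in the paper's argument is far from injective (it is a sum of two isomorphisms), so ruling out cancellation between the two boundary tori really amounts to a summand-by-summand argument; your factorization of $g$ through the collapse $\kappa$ makes that bookkeeping airtight, at the cost of being longer. Your side remark about connectedness of $(\partial X^j)^\vph$ (needed so that all lifts of $Y_{bj}$ are parallel to the single reference lift $\ti{Y}_{bj}$) is also a point the paper only addresses implicitly when it defines the canonical map $Y_b^\vph\to\ti{Y}_b$, and it does hold here since the meridian and longitude of each component generate $H_1(X_L;\Z)$ when the linking number is one.
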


Suppose that $x \in \ker(f)$. Then in particular $x \in \ker(H_1(Y_b^\vph) \to H_1(X_H^\vph))$.  But then, since $X_H^\vph$ is just a product $S^1 \times S^1 \times I$ and $\partial X_H^\vph \cong \partial X^\vph$, the inclusion induced maps $H_1((\partial X^1)^\vph) \to H_1(X_H^\vph)$ and $H_1((\partial X^2)^\vph) \to H_1(X_H^\vph)$ are isomorphisms.  So indeed, using the inverses of these maps and commutativity of the bottom square, we have $x \in \ker(g)$. This concludes the proof of the claim.

A short diagram chase proves that the map $$H_2\fs \to H_2(\fsn,X^\vph) \to H_2(\partial E_C^\vph, X^\vph)$$ is the zero map.  Indeed, let $y \in H_2\fs$.  Then $\partial(y)\in H_1(Y_b^\varphi)$ lies in the kernel of $f$.  Since $\ker(f) \subseteq \ker(g)$, and by the commutativity of the diagram
it follows that the image of $y$ in $H_2(\partial E_C^\varphi)$ lies in the image of $H_2(X_H^\vph) \oplus H_2(X^\vph)$.
But we saw above that the map to $H_2(\partial E_C^\varphi,X^\varphi)$ vanishes on the image of $H_2(X_H^\vph) \oplus H_2(X^\vph)$.

\end{proof}

For the proof of Lemma \ref{lem:isom2} we will need the following lemma,
which we will also use in Section \ref{section:extend}.

\begin{lemma} \label{lem:isomsplit}
Let $J=H$ or $J=L$. We write
\[ F:=H_1(X_J^\vph, Y_a^{\vph};\Z)/\mbox{torsion}.\]
Then there exists a commutative diagram
\[ \xymatrix{TH_1(X_J^\varphi,Y_a^\varphi)\oplus F \ar[d]^{i_*\oplus \id}  \ar[r]^-\cong &   H_1(X_J^\varphi,Y_a^\varphi)\ar[d]^{i_*}\\
TH_1(E_C^\varphi,Y_a^\varphi)\oplus F\ar[r]^-\cong & H_1(E_C^\varphi,Y_a^\varphi).}\]
\end{lemma}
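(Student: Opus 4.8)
The plan is to produce the horizontal isomorphisms as splittings of the torsion/free-part short exact sequences for the two relative homology groups, and then to check that they can be chosen compatibly, i.e. that the square commutes. First I would observe that $F = H_1(X_J^\vph,Y_a^\vph)/\mathrm{torsion}$ is a finitely generated free abelian group, since $X_J^\vph$ is a compact manifold and hence all its relative homology groups are finitely generated; therefore the short exact sequence
\[
0 \to TH_1(X_J^\vph,Y_a^\vph) \to H_1(X_J^\vph,Y_a^\vph) \to F \to 0
\]
splits, giving the top horizontal isomorphism. The choice of splitting amounts to choosing a lift $F \to H_1(X_J^\vph,Y_a^\vph)$ of the identity on $F$; equivalently, picking a set of elements in $H_1(X_J^\vph,Y_a^\vph)$ projecting to a basis of $F$.

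Next I would set up the analogous sequence for $E_C^\vph$. Here the key input is Lemma \ref{lemma:Zhomequiv_implies_torsion}: the inclusion $X_J^\vph \to E_C^\vph$ (for $J = L$ this is literally $X^\vph \to E_C^\vph$; for $J = H$ it is $X_H^\vph \to E_C^\vph \subseteq W_C^\vph$, or one argues directly since both exteriors $\Z$-homology-equivalence-embed in $E_C^\vph$ via the concordance) is a $\Q$-homology equivalence on the covers. Combined with the fact that $Y_a^\vph$ includes identically, the map $H_1(X_J^\vph,Y_a^\vph) \to H_1(E_C^\vph,Y_a^\vph)$ is a rational isomorphism, hence induces an isomorphism on free parts; so the free part of $H_1(E_C^\vph,Y_a^\vph)$ is canonically identified with $F$ via $i_*$. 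I would then choose the splitting of
\[
0 \to TH_1(E_C^\vph,Y_a^\vph) \to H_1(E_C^\vph,Y_a^\vph) \to F \to 0
\]
by taking the lift $F \to H_1(E_C^\vph,Y_a^\vph)$ to be $i_*$ composed with the top splitting $F \to H_1(X_J^\vph,Y_a^\vph)$ already chosen. With this coordinated choice, the right-hand square of the diagram commutes essentially by construction: on the $F$-summand both composites are $i_*$ restricted to the chosen lift of $F$, and on the torsion summand $i_*$ maps $TH_1(X_J^\vph,Y_a^\vph)$ into $TH_1(E_C^\vph,Y_a^\vph)$, which is exactly the left vertical map $i_* \oplus \id$ restricted to that summand. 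One should also note that the image of the chosen lift of $F$ under $i_*$ lands in a complement of the torsion in $H_1(E_C^\vph,Y_a^\vph)$, which is automatic since it projects isomorphically to $F$.

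The only genuinely delicate point — and the step I expect to be the main obstacle — is verifying that $i_* \colon H_1(X_J^\vph,Y_a^\vph) \to H_1(E_C^\vph,Y_a^\vph)$ really does restrict to an isomorphism on free parts, and in particular is injective on the free part, so that the chosen lift of $F$ in $E_C^\vph$ genuinely has infinite order in each coordinate and spans a complement to the torsion. For $J = L$ this follows from Lemma \ref{lemma:Zhomequiv_implies_torsion} applied to the pair, via the five lemma on the long exact sequences of $(X^\vph,Y_a^\vph)$ and $(E_C^\vph,Y_a^\vph)$ with $\Q$-coefficients, together with the observation that rationally $H_2(E_C^\vph,X^\vph) = 0$. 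For $J = H$ one uses that $X_H^\vph \to E_C^\vph$ is likewise a $\Q$-homology equivalence (the inclusion $X_H \hookrightarrow E_C$ is a $\Z$-homology equivalence, being the other end of the concordance exterior, so Lemma \ref{lemma:Zhomequiv_implies_torsion} applies verbatim). Once rational injectivity of $i_*$ on $H_1(X_J^\vph,Y_a^\vph)$ is in hand, the matching of free ranks forces it to be an isomorphism on free parts, and the coordinated splittings furnish the diagram. The remaining bookkeeping — that a finitely generated abelian group is the direct sum of its torsion and any chosen free complement, naturally in the two chosen complements — is routine and I would not belabour it.
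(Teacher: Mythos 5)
Your overall strategy -- split off the free part of $H_1(X_J^\vph,Y_a^\vph)$ and transport the splitting to $H_1(E_C^\vph,Y_a^\vph)$ via $i_*$ -- is the same as the paper's, and your five-lemma argument that $i_*\colon H_1(X_J^\vph,Y_a^\vph;\Q)\to H_1(E_C^\vph,Y_a^\vph;\Q)$ is an isomorphism is fine (Lemma \ref{lemma:Zhomequiv_implies_torsion} applies for both $J=L$ and $J=H$, since both $X_L\to E_C$ and $X_H\to E_C$ are $\Z$-homology equivalences). But the step you yourself flag as the delicate one is where the argument genuinely breaks: you claim that rational injectivity together with equality of free ranks forces $i_*$ to be an isomorphism on free parts. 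That implication is false in general: multiplication by $2$ on $\Z$ is rationally an isomorphism, injective, with matching ranks, yet not an isomorphism on free parts. With only what you have established, the composite $F\xrightarrow{s}H_1(X_J^\vph,Y_a^\vph)\xrightarrow{i_*}H_1(E_C^\vph,Y_a^\vph)\to H_1(E_C^\vph,Y_a^\vph)/\mathrm{torsion}$ could a priori be injective with finite nontrivial cokernel, in which case your map $TH_1(E_C^\vph,Y_a^\vph)\oplus F\to H_1(E_C^\vph,Y_a^\vph)$ would not be surjective and the bottom horizontal arrow of the lemma would not be an isomorphism. Knowing that the cokernel of $i_*$ on the relative groups is finite (via the triple $(E_C^\vph,X_J^\vph,Y_a^\vph)$) does not repair this, for the same reason.

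The paper closes exactly this gap by identifying the free part differently: from the long exact sequences of the pairs one gets extensions $0\to H_1(X_J^\vph)/\im H_1(Y_a^\vph)\to H_1(X_J^\vph,Y_a^\vph)\to F'\to 0$ and likewise for $E_C^\vph$, where the \emph{same} torsion-free group $F'=\ker(H_0(Y_a^\vph)\to H_0(X_J^\vph))=\ker(H_0(Y_a^\vph)\to H_0(E_C^\vph))$ appears as quotient in both, and $i_*$ covers the identity of $F'$. Lemma \ref{lem:diagram} shows $H_1(Y_a^\vph)\to H_1(X_J^\vph)$ and $H_1(Y_a^\vph)\to H_1(E_C^\vph)$ are rationally surjective, so the left-hand groups are precisely the torsion subgroups; since the projections onto $F'$ are compatible with $i_*$, pushing a splitting $F'\to H_1(X_J^\vph,Y_a^\vph)$ forward by $i_*$ is automatically again a splitting (its composite with the projection is the identity on $F'$). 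This compatibility -- that the projection of $i_*(s(x))$ to the free quotient recovers $x$ -- is exactly what your proposal is missing, and without it (or an equivalent argument) the proof is incomplete.
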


\begin{proof}
We have the following commutative diagram.
\[\xymatrix{
H_1(Y_a^\vph) \ar[r] \ar@{=}[d] & H_1(X_J^\vph) \ar[d] \ar[r] & H_1(X_J^\vph,Y_a^\vph) \ar[r] \ar[d] & H_0(Y_a^\vph) \ar[r] \ar@{=}[d] & H_0(X_J^{\vph}) \ar[d]_{\cong} \\
H_1(Y_a^\vph) \ar[r]  & H_1(E_C^\vph) \ar[r] & H_1(E_C^\vph,Y_a^\vph) \ar[r] & H_0(Y_a^\vph) \ar[r] & H_0(E_C^{\vph})
}\]
It follows from Lemma \ref{lem:diagram} that the two left horizontal maps are rationally surjective.
We write $F':=\ker(H_0(Y_a^\varphi)\to H_0(X_J^\varphi))$, which equals
$\ker(H_0(Y_a^\varphi)\to H_0(E_c^\varphi))$. Note that $F'$, as a subgroup of a torsion-free group is itself torsion-free.
We now obtain the following commutative diagram of short exact sequences:
\[\xymatrix{
0 \ar[r] & H_1(X_J^\vph)/\im(H_1(Y_a^\vph)) \ar[d] \ar[r] & H_1(X_J^\vph,Y_a^\vph) \ar[r] \ar[d] & F'\ar[r]\ar@{=}[d]&0\\
0\ar[r]& H_1(E_C^\vph)/\im(H_1(Y_a^\vph)) \ar[r] & H_1(E_C^\vph,Y_a^\vph) \ar[r] & F'\ar[r]&0.}\]
By the above the two groups on the left are torsion. Since $F'$ is torsion-free it follows that the images of the groups on the left are precisely
the torsion subgroups. We pick an isomorphism $F\to F'$ and a splitting $F'\to H_1(X_J^\vph,Y_a^\vph)$.
Together with the induced splitting $F'\to H_1(X_J^\vph,Y_a^\vph)\to H_1(E_C^\varphi,Y_a^\varphi)$ we obtain the desired commutative diagram.
\end{proof}

\begin{proof}[Proof of Lemma \ref{lem:isom2}]
Recall the statement of Lemma \ref{lem:isom2}: the inclusion $Y_a^\vph\to X_H^\vph$ induces an isomorphism
\[ TH_1(E_C^\vph, Y_a^{\vph}) \xrightarrow{\simeq} TH_1(E_C^\vph, X_H^\vph).\]
We apply Lemma \ref{lem:isomsplit} to the case $J=H$. We obtain the following commutative diagram, where the right hand column contains a portion of the long exact sequence of the triple.
\[ \xymatrix{TH_1(X_H^\varphi,Y_a^\varphi)\oplus F \ar[d]^{i_*\oplus \id}  \ar[r]^-\cong &   H_1(X_H^\varphi,Y_a^\varphi)\ar[d]^{i_*}\\
TH_1(E_C^\varphi,Y_a^\varphi)\oplus F\ar[r]^-\cong & H_1(E_C^\varphi,Y_a^\varphi)\ar[d]\\
& H_1(E_C^\varphi,X_H^\varphi) = TH_1(E_C^\varphi,X_H^\varphi)\ar[d] \\& H_0(X_H^\vph,Y_a^\vph) = 0 }\]
We know that \[H_1(E_C^\varphi,X_H^\varphi) = TH_1(E_C^\varphi,X_H^\varphi)\] by Lemma \ref{lemma:Zhomequiv_implies_torsion}.  We immediately obtain the desired conclusion from the observation that $TH_1(X_H^\varphi,Y_a^\varphi)=0$.  This is a consequence of the proof of Lemma \ref{lem:isomsplit}, which shows that $TH_1(X_H^\vph,Y_a^\vph)$ is precisely $H_1(X_H^\vph)/\im(H_1(Y_a^\vph))$.  But by Lemma \ref{lem:diagram} (1), (2) and (3), $H_1(Y_a^\vph) \to H_1(X_H^\vph)$ is surjective.
\end{proof}

\subsection{Extending characters} \label{section:extend}

\begin{proposition}\label{prop:extend}
Let $L$ be an oriented 2-component link which is concordant to the Hopf link via a concordance $C$. We write $X=X_L$.  Let $\varphi: H_1(X;\Z) = \Z^2 \to A$ be an admissible epimorphism onto a group of prime power order.
Then there exists a metaboliser $P\subset TH_1(X^\vph,Y_a^\vph;\Z)$ for the linking form $\lambda_L$  such that for any prime $q$ and any homomorphism
$\chi\colon H_1(X^\varphi;\Z)\to \Z_{q^k}$
which factors through $H_1(X^\varphi;\Z)\to H_1(X^\varphi,Y_a^\varphi;\Z) \to \Z_{q^k}$
and which vanishes on $P$, there exists an integer $l \geqslant k$ and a homomorphism $H_1(E_C^\varphi;\Z)\to \Z_{q^l}$, such that following diagram commutes:
\[ \xymatrix{ H_1(X^\varphi;\Z)\ar[d] \ar[r]^-{\chi} & \Z_{q^k} \ar[d]^{\cdot q^{l-k}} \\  H_1(E_C^\varphi;\Z)\ar[r]& \Z_{q^l}.}\]
\end{proposition}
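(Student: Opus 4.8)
The plan is to take $P$ to be the metaboliser produced by Proposition \ref{prop:linkform}, namely $P = \ker(TH_1(X^\vph,Y_a^\vph;\Z) \to TH_1(E_C^\vph,Y_a^\vph;\Z))$, and then to show that a character $\chi$ which vanishes on $P$ and which factors through $TH_1(X^\vph,Y_a^\vph;\Z)$ extends, after multiplying by a suitable power of $q$, over $H_1(E_C^\vph;\Z)$. The first step is to rephrase the hypothesis: since $\chi$ factors through $H_1(X^\vph;\Z) \to H_1(X^\vph,Y_a^\vph;\Z) \to \Z_{q^k}$, and $\Z_{q^k}$ is finite, the second map in fact factors through the torsion subgroup after composing with a projection — more precisely, using Lemma \ref{lem:isomsplit} with $J = L$, we have $H_1(X^\vph,Y_a^\vph;\Z) \cong TH_1(X^\vph,Y_a^\vph;\Z) \oplus F$ with $F$ free, so $\chi$ is determined by a homomorphism $\bar\chi \colon TH_1(X^\vph,Y_a^\vph;\Z) \to \Z_{q^k}$ together with a homomorphism on $F$; the condition ``$\chi$ vanishes on $P$'' means $\bar\chi|_P = 0$.

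The key step is then to extend $\bar\chi$ over $TH_1(E_C^\vph,Y_a^\vph;\Z)$. Since $\bar\chi$ vanishes on $P = \ker(TH_1(X^\vph,Y_a^\vph;\Z) \to TH_1(E_C^\vph,Y_a^\vph;\Z))$, it descends to a homomorphism from the image of $i_* \colon TH_1(X^\vph,Y_a^\vph;\Z) \to TH_1(E_C^\vph,Y_a^\vph;\Z)$ into $\Q/\Z$ (viewing $\Z_{q^k} \hookrightarrow \Q/\Z$). Because $\Q/\Z$ is an injective $\Z$-module, this extends to a homomorphism $TH_1(E_C^\vph,Y_a^\vph;\Z) \to \Q/\Z$; as the source is a finite $q$-group (it is $q$-torsion since $A$ is a $p$-group and... more carefully, $TH_1(E_C^\vph,Y_a^\vph;\Z)$ is finite by Lemma \ref{lemma:Zhomequiv_implies_torsion} applied to the pair, and one restricts to its $q$-primary part), the extension lands in $\Z_{q^l}$ for some $l \geqslant k$, and one can arrange that the composite $TH_1(X^\vph,Y_a^\vph;\Z) \to TH_1(E_C^\vph,Y_a^\vph;\Z) \to \Z_{q^l}$ equals $q^{l-k}$ times $\bar\chi$ followed by the inclusion $\Z_{q^k} \hookrightarrow \Z_{q^l}$ — the point being that $\bar\chi$ itself need not extend on the nose, but $q^{l-k}\bar\chi$ does once $l$ is chosen large enough to absorb the exponent of $TH_1(E_C^\vph,Y_a^\vph;\Z)$. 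Finally, using Lemma \ref{lem:isomsplit} again, $H_1(E_C^\vph,Y_a^\vph;\Z) \cong TH_1(E_C^\vph,Y_a^\vph;\Z) \oplus F$ with the \emph{same} free part $F$, compatibly with $i_* \oplus \id$, so we may extend over the free summand to match $\chi$ on $F$ (after multiplying by $q^{l-k}$) and obtain a homomorphism $H_1(E_C^\vph,Y_a^\vph;\Z) \to \Z_{q^l}$; precomposing with $H_1(E_C^\vph;\Z) \to H_1(E_C^\vph,Y_a^\vph;\Z)$ gives the desired map, and the square commutes by construction.

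The main obstacle I expect is bookkeeping the power of $q$ correctly and verifying that the splittings of Lemma \ref{lem:isomsplit} for $J=L$ and for $E_C$ are genuinely compatible via $i_* \oplus \id$ — the lemma as stated gives the compatible square, so the real content is just matching the free parts — together with checking that the identification $\Z_{q^k} \hookrightarrow \Q/\Z$ interacts correctly with the linking-form description of $P^\bot$; one wants to be sure that ``$\bar\chi$ vanishes on $P$'' is exactly the condition needed for $\bar\chi$ to factor through $i_*(TH_1(X^\vph,Y_a^\vph;\Z))$, which is immediate from the definition of $P$ as the kernel of $i_*$. The injectivity of $\Q/\Z$ does the rest of the work; no further input on the linking form itself is needed here, since Proposition \ref{prop:linkform} has already been invoked to know $P = P^\bot$.
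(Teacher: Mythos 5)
Your proposal is correct and follows essentially the same route as the paper: you take the same metaboliser $P=\ker(i_*)$ from Proposition \ref{prop:linkform}, use Lemma \ref{lem:isomsplit} with $J=L$ to split off the free part $F$ compatibly on both sides, extend the torsion part of the character over $TH_1(E_C^\vph,Y_a^\vph;\Z)$ at the cost of a factor $q^{l-k}$, and reassemble with $q^{l-k}\chi_F$ on $F$ before precomposing with $H_1(E_C^\vph;\Z)\to H_1(E_C^\vph,Y_a^\vph;\Z)$. The only difference is cosmetic: where the paper cites an unproved ``elementary observation'' about finite abelian groups, you justify that step via injectivity of $\Q/\Z$ together with projection to the $q$-primary part, which is a perfectly valid (and arguably cleaner) way to prove the same observation.
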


\begin{proof}
We start out with the following elementary observation: let $A\to B$ be a homomorphism between finite abelian groups and let $\chi\colon A\to \Z_{q^k}$ be a homomorphism to a cyclic group of prime power order which vanishes on $\ker(A\to B)$.  Then there exists an integer $l \geqslant k$ and a homomorphism $\chi'\colon B\to \Z_{q^l}$, such that the following diagram commutes:
\[ \xymatrix{ A\ar[r]\ar[d] & \Z_{q^k} \ar[d]^{\cdot q^{l-k}} \\ B\ar[r] & \Z_{q^l}.}\]
For example, this observation can be proved using the classification of finite abelian groups.

Now let
\[P := \ker(TH_1(X^\vph,Y_a^\vph;\Z) \to TH_1(E_C^{\vph},Y_a^\vph;\Z)).\]
By Proposition \ref{prop:linkform} this is a metaboliser of the linking form  $\lambda_L$.  Now let $\chi$ be as in the statement of the lemma.
We apply Lemma \ref{lem:isomsplit} to $J = L$. We write \[F:=H_1(X_L^\vph, Y_a^{\vph};\Z)/\mbox{torsion}\]
and we obtain the following commutative diagram
\[ \xymatrix{H_1(X_L^\varphi)\ar[d]\ar[r] &
H_1(X_L^\varphi,Y_a^\varphi)\ar[d]^{i_*}\ar[r]^-\cong  &
TH_1(X_L^\varphi,Y_a^\varphi)\oplus F \ar[d]^{i_*\oplus \id}\ar[r]^-{\chi_T\oplus \chi_F}&
\Z_{q^k}    \\
H_1(E_C^\varphi)\ar[r] & H_1(E_C^\varphi,Y_a^\varphi)\ar[r]^-\cong &
TH_1(E_C^\varphi,Y_a^\varphi)\oplus F.}\]
Note that since $\chi$ vanishes on $P$, we can apply the above observation to
$A:= TH_1(X_L^\varphi,Y_a^\varphi)\to  B:= TH_1(E_C^\varphi,Y_a^\varphi)$ and
to $\chi_T$ to obtain
a homomorphism $\chi_T'\colon  TH_1(E_C^\varphi,Y_a^\varphi) \to \Z_{q^l}$ with the appropriate properties.  The homomorphisms
\[H_1(E_C^\varphi)\to H_1(E_C^\varphi,Y_a^\varphi)\xrightarrow{\cong}
TH_1(E_C^\varphi,Y_a^\varphi)\oplus F\xrightarrow{\chi_T'\oplus q^{l-k}\cdot \chi_F}
\Z_{q^l}\]
complete the diagram as required.
\end{proof}

For a two-component link $L$ of linking number 1, we define the closed 3-manifold $M_L$ to be given by the union
\[M_L := X_L \cup_{\partial X_L} -X_H\]
of the exterior of $L$ and the exterior of the Hopf link glued together along their common boundary $\partial X_L = \partial X_H$, respecting the ordering of the link components and identifying each of the subsets $Y_a \subset \partial X_J$ for $J = L,H$.  Note that $H_1(M_L;\Z) \cong \Z^3$, and that, if $L=H$, we have $M_H \cong T = S^1 \times S^1 \times S^1$.

\begin{lemma}\label{Lemma:H_1X_Lvarphi_subgroup}
Let $\vph \colon H_1(M_L;\Z) \to A$ be a homomorphism to a finite abelian $p$-group $A$. Then $H_1(X_L^\vph;\Z)$ fits into a short exact sequence \[0 \to H_1(X_L^\vph;\Z) \to H_1(M_L^\vph;\Z) \to \Z \to 0,\]
so that $H_1(X_L^\vph;\Z)$ is a subgroup of $H_1(M_L^\vph;\Z).$
\end{lemma}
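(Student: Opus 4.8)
The plan is to obtain the short exact sequence from the Mayer--Vietoris sequence of the decomposition $M_L = X_L \cup_{\partial X_L} (-X_H)$ applied to the cover. First I would observe that the cover $M_L^\vph \to M_L$ restricts to covers $X_L^\vph \to X_L$, $X_H^\vph \to X_H$, and $(\partial X_L)^\vph \to \partial X_L$, giving a decomposition $M_L^\vph = X_L^\vph \cup_{(\partial X_L)^\vph} X_H^\vph$. The resulting Mayer--Vietoris sequence reads
\[ H_1((\partial X_L)^\vph) \to H_1(X_L^\vph) \oplus H_1(X_H^\vph) \to H_1(M_L^\vph) \to H_0((\partial X_L)^\vph) \to H_0(X_L^\vph) \oplus H_0(X_H^\vph).\]
The key computational inputs are: $H_1(X_H^\vph) \to$ is understood because $X_H \cong S^1 \times S^1 \times I$, so $X_H^\vph$ is again of the form $S^1 \times S^1 \times I$ (a torus times an interval), and the inclusion $(\partial X_L)^\vph \hookrightarrow X_H^\vph$ induces a surjection on $H_1$ — this is exactly the content of Lemma \ref{lem:diagram} (1),(2),(3), or can be seen directly since $\partial X_H^\vph \to X_H^\vph$ is a homotopy equivalence on one boundary component. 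I would also need the analogous (easier) fact about $H_0$: since each of $X_L^\vph$, $X_H^\vph$, $M_L^\vph$ is connected (the cover is induced from an epimorphism, wait — $\vph$ need not be surjective, but $M_L^\vph$ is still connected as a finite cover of the connected $M_L$ only if $\vph$ is onto; in general one takes the component-wise statement, or restricts attention to the connected cover; I would be careful here and note that the statement concerns the cover $M_L^\vph$ which is taken connected), the map $H_0((\partial X_L)^\vph) \to H_0(X_L^\vph) \oplus H_0(X_H^\vph)$ has kernel a free abelian group.

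From the Mayer--Vietoris sequence, the surjectivity of $H_1((\partial X_L)^\vph) \to H_1(X_H^\vph)$ lets me kill the $H_1(X_H^\vph)$ summand: more precisely, I would split off the $X_H^\vph$ contribution to conclude that the composite $H_1(X_L^\vph) \to H_1(M_L^\vph)$ is injective, with cokernel identified with a subgroup of $H_0((\partial X_L)^\vph)$, namely the kernel of $H_0((\partial X_L)^\vph) \to H_0(X_L^\vph)\oplus H_0(X_H^\vph)$. A clean way to package this: use the exact sequence of the pair $(M_L^\vph, X_L^\vph)$ together with excision $H_*(M_L^\vph, X_L^\vph) \cong H_*(X_H^\vph, (\partial X_L)^\vph)$. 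Since $X_H^\vph$ is a torus times an interval and $(\partial X_L)^\vph$ maps onto it $\pi_1$-surjectively with the right meridian/longitude behaviour, I can compute $H_1(X_H^\vph, (\partial X_L)^\vph) = 0$ and $H_2(X_H^\vph,(\partial X_L)^\vph) \cong \Z$ (dually to $H_0$ of the "other" piece). Then the long exact sequence of the pair gives
\[ H_2(M_L^\vph, X_L^\vph) \to H_1(X_L^\vph) \to H_1(M_L^\vph) \to H_1(M_L^\vph, X_L^\vph) = 0,\]
and identifying $H_2(M_L^\vph,X_L^\vph)\cong \Z$ together with checking that the connecting map $\Z = H_2(M_L^\vph,X_L^\vph) \to H_1(X_L^\vph)$ is zero yields the desired
\[ 0 \to H_1(X_L^\vph) \to H_1(M_L^\vph) \to \Z \to 0.\]

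The main obstacle I anticipate is the careful bookkeeping of the covers $(\partial X_L)^\vph$ and $X_H^\vph$ and verifying the precise claims $H_1(X_H^\vph,(\partial X_L)^\vph) = 0$ and $H_2(X_H^\vph,(\partial X_L)^\vph)\cong\Z$ (equivalently, the surjectivity on $H_1$ and the computation of the cokernel term). This rests on the observation — already used repeatedly in the excerpt — that $X_H^\vph$ is homeomorphic to $T^2 \times I$ and that $\partial X_H^\vph$ consists of two copies of $T^2$, one of which maps by a homotopy equivalence; since $\vph$ is not assumed admissible here, I cannot invoke Lemma \ref{lem:diagram} directly, but the relevant surjectivity $H_1(\partial X_H^\vph) \to H_1(X_H^\vph)$ holds for \emph{any} $\vph$ because either boundary component of a product $T^2 \times I$ includes as a deformation retract, and this survives passage to the cover. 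Once that local computation is in hand, the diagram chase producing the short exact sequence, and in particular the vanishing of the connecting homomorphism $\Z \to H_1(X_L^\vph)$ (which follows since the generator of $H_2(M_L^\vph,X_L^\vph)\cong H_2(X_H^\vph,(\partial X_L)^\vph)$ is represented by a relative $2$-cycle whose boundary bounds in $(\partial X_L)^\vph$, hence in $X_L^\vph$), is routine.
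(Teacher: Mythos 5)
Your reduction to the pair $(M_L^\vph, X_L^\vph)$ and excision is fine in outline, but the two computations it rests on are wrong, and the step you call routine is exactly the hard point of the lemma. Each component of $X_H^\vph$ is a product $\tilde{T} \times I$ with $\tilde{T}$ a finite cover of the torus, and $(\partial X_L)^\vph$ is its \emph{whole} boundary $\tilde{T} \times \{0,1\}$: excision forces you to take both boundary tori, not just the one you would like to treat as a deformation retract. Hence $H_k(X_H^\vph,(\partial X_L)^\vph) \cong \bigoplus H_{k-1}(\tilde{T})$ (sum over components), so $H_1(M_L^\vph,X_L^\vph) \cong \Z^c$ and $H_2(M_L^\vph,X_L^\vph) \cong \Z^{2c}$, where $c$ is the number of components of $X_H^\vph$ --- not $0$ and $\Z$ as you claim. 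Indeed, if $H_1(M_L^\vph,X_L^\vph)$ were zero then $H_1(X_L^\vph) \to H_1(M_L^\vph)$ would be surjective, contradicting the very $\Z$-quotient you are trying to exhibit; the $\Z$ in the short exact sequence arises from $H_1(M_L^\vph,X_L^\vph)$ (equivalently from the $H_0$-kernel term in Mayer--Vietoris), not from $H_2$.

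More seriously, the vanishing of the connecting map $H_2(M_L^\vph,X_L^\vph) \to H_1(X_L^\vph)$ --- equivalently the injectivity of $H_1(X_L^\vph) \to H_1(M_L^\vph)$, equivalently the inclusion $\ker\big(H_1((\partial X_L)^\vph) \to H_1(X_H^\vph)\big) \subseteq \ker\big(H_1((\partial X_L)^\vph) \to H_1(X_L^\vph)\big)$ --- is the genuine content of the lemma, and your justification is not an argument: the image of the connecting map consists of classes of the form (a cycle in the top boundary torus of $X_H^\vph$) minus (the corresponding cycle in the bottom one), for instance lifts of $\mu_1-\lambda_2$; these die in $X_H^\vph$ by construction, but there is no a priori reason they die in $X_L^\vph$. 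This is precisely where the hypotheses enter, and your proposal never uses that $A$ is a $p$-group. The paper proves the kernel inclusion by pushing a kernel element down to the base (where $\ker f = \ker g$, using linking number one and the Hopf link structure), transferring back up, observing that $t_* \circ \pi_*$ is multiplication by $p^\a$ on $H_1$ of a covering torus, so the offending class in $H_1(X_L^\vph;\Z)$ is $p$-primary torsion, and then showing $H_1(X_L^\vph;\Z)$ has \emph{no} $p$-primary torsion: since $\partial X_L^1 \to X_L$ is a $\Z$-homology equivalence, Levine's lemma on inverting matrices over $\Z_{(p)}[A]$ (for $A$ a $p$-group) upgrades this to a $\Z_{(p)}$-homology equivalence of the $\vph$-covers. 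Your proposal contains no substitute for this step, so as written it has a real gap.
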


\begin{proof}
Consider the following exact sequence which arises from the Mayer-Vietoris sequence:
\[H_1((\partial X_L^1)^{\vph}) \oplus H_1((\partial X_L^2)^{\vph}) \to H_1(X_L^\vph) \oplus H_1(X_H^\vph) \to  H_1(M_L^\vph) \to \Z \to 0.\]
The $\Z$--term is given by \[\ker\big(H_0((\partial X_L^1)^{\vph}) \oplus H_0((\partial X_L^2)^{\vph}) \to H_0(X_L^\vph) \oplus H_0(X_H^\vph)\big).\]
We make the following claim.
\begin{claim}
\[\ba{rcl} \ker\big(H_1((\partial X_L^1)^{\vph}) \oplus H_1((\partial X_L^2)^{\vph}) \to H_1(X_H^\vph)\big) &\subseteq& \\ \ker\big(H_1((\partial X_L^1)^{\vph}) \oplus H_1((\partial X_L^2)^{\vph}) \to H_1(X_L^\vph)\big) & & \ea\]
\end{claim}

Since the maps $(\partial X_H^i)^{\vph} \to X_H^\vph$ are homotopy equivalences for $i=1,2$, the map $H_1((\partial X_L^1)^{\vph}) \oplus H_1((\partial X_L^2)^{\vph}) \to H_1(X_H^\vph)$ is surjective.  The claim then implies that the Mayer-Vietoris sequence for $M_L^\vph$ becomes the short exact sequence
\[0 \to H_1(X_L^\vph) \to H_1(M_L^\vph) \to \Z \to 0.\]
as required.

We now turn to the proof of the claim.
Let $\pi \colon Y^\vph \to Y$ be a finite covering: there are induced maps $\pi_* \colon H_*(Y^\vph) \to H_*(Y)$, and transfer maps $t_* \colon H_*(Y) \to H_*(Y^\vph)$.  To prove the claim, we consider the following commutative diagram:

\[\xymatrix{
H_1(X_L^\vph) \ar@<2pt>[d]^{\pi_*} & H_1((\partial X_L^1)^{\vph}) \oplus H_1((\partial X_L^2)^{\vph}) \ar[l]^-{h} \ar[r] \ar@<2pt>[d]^{\pi_* \oplus \pi_*} & H_1(X_H^\vph) \ar@<2pt>[d]^{\pi_*}\\
H_1(X_L) \ar@<2pt>[u]^{t_*} & H_1(\partial X_L^1) \oplus H_1(\partial X_L^2) \ar[l]^-f \ar[r]_-g \ar@<2pt>[u]^{t_* \oplus t_*} & H_1(X_H) \ar@<2pt>[u]^{t_*}.
}\]
Suppose that we have $$x \in \ker\big(H_1((\partial X_L^1)^{\vph}) \oplus H_1((\partial X_L^2)^{\vph}) \to H_1(X_H^\vph)\big).$$  Then $x \mapsto 0 \in H_1(X_H)$.  In the base spaces, we have that $\ker f = \ker g$, with both kernels generated by $[\mu_1 -\lambda_2]$ and $[\mu_2 - \lambda_1]$, where $\mu_i,\lambda_i$ is the meridian and longitude of the link component $i$.  Since the right hand square with down arrows is commutative, we see that $$f ((\pi_* \oplus \pi_*)(x)) = 0 \in H_1(X_L).$$ We write $y:= (\pi_* \oplus \pi_*)(x)$.  Since the left hand square is commutative with up arrows, we have that \[h((t_* \oplus t_*)(y)) = t_*(f(y)) = t_*(0) = 0 \in H_1(X_L^\vph).\]  We then observe that for tori such as $\partial X_L^i \cong S^1 \times S^1$, we have that $$t_* \circ \pi_* \colon H_1((\partial X_L^i)^\vph) \to H_1((\partial X_L^i)^\vph)$$ is given by multiplication by $p^\a$ for some $\a \geqslant 0$, which may depend on $x$.  Therefore
\[0 = h((t_* \oplus t_*)(y)) = h((t_* \oplus t_*)(\pi_* \oplus \pi_*)(x)) = h(p^\a x) = p^\a h(x)\]
 We have that the image $h(x)$ of $x$ in $H_1(X_L^\vph)$ is either $p$-primary torsion, or is zero.  To prove the claim we therefore wish to show that the latter case always holds.  We therefore proceed to show that there is no $p$-primary torsion in $H_1(X_L^\vph)$.

We use the fact that the inclusion map $\partial X_L^1 \to X_L$ induces a $\Z$-homology equivalence $H_*(\partial X_L^1;\Z) \toiso H_*(X_L;\Z)$ to deduce that there is also an isomorphism $H_*((\partial X_L^1)^{\vph};\Z_{(p)}) \toiso H_*(X_L^\vph;\Z_{(p)})$.  This deduction is an extension of Lemma \ref{lemma:Zhomequiv_implies_torsion} to $\Z_{(p)}$ coefficients.  We refer to \cite[Lemma~3.3]{cha_Hirzebuch_type_defects}, which follows from \cite[Lemma~4.3]{Le94}.  Levine shows that inverting integers coprime to $p$ in the coefficients of the group ring of a $p$-group $A$, to obtain $\Z_{(p)}[A]$, is sufficient to invert square matrices over $\Z[A]$ which are invertible over $\Z_p$.

This then implies, since $H_1((\partial X_L^1)^\vph;\Z) \cong \Z_{(p)}^2$, that there is no $p$-primary torsion in $H_1(X_L^\vph;\Z)$.  We make use of the universal coefficient theorem and the fact that $H_0((\partial X_L^1)^\vph;\Z)$ and $H_0(X_L^\vph;\Z)$ are torsion free.
This completes the proof of the claim.
\end{proof}

Suppose that $L$ is concordant to $H$ with concordance $C$.  The exterior $E_C$ of the concordance is a 4-manifold with boundary $X_L \cup (\partial X_H \times I) \cup X_H$. We can glue $E_C$ and $X_H \times I$ together along the $\partial X_H \times I$ components of their boundaries to define the manifold
\[W_C := E_C \cup (X_H \times I),\]
glued in such a way that $W_C$ is a 4-manifold with boundary $\partial W_C \cong M_L \cup -M_H$.

For $J = L,H$ we have the following diagram with exact rows given by Mayer-Vietoris sequences with $\Z$ coefficients and vertical maps induced by the inclusion maps:
\[\xymatrix @C-0.4cm{H_i(\partial X_J) \ar[r] \ar[d]_{\cong} & H_i(X_J) \oplus H_i(X_H) \ar[r] \ar[d]_{\cong} &  H_i(M_J) \ar[r] \ar[d] & H_{i-1}(\partial X_J) \ar[d]_{\cong}  \\H_i(\partial X_J \times I) \ar[r] & H_i(E_C) \oplus H_i(X_H \times I) \ar[r] & H_i(W_C) \ar[r] & H_{i-1}(\partial X_J \times I).
}\]
We already know that  the vertical maps, except possibly the maps $H_i(M_J) \to H_i(W_C)$,  are isomorphisms.
 But  the 5-lemma then implies that the maps  $H_i(M_J) \to H_i(W_C)$ are also isomorphisms, i.e.  the inclusions $M_L \to W_C$ and $M_H \to W_C$ induce $\Z$-homology equivalences.

Now let $\vph \colon H_1(M_L^\vph;\Z) \to A$ be such that $\vph$ restricted to $H_1(X_L;\Z) \subset H_1(M_L;\Z)$ is admissible.  We also call such homomorphisms admissible and we denote the corresponding cover of $M_L$ by $M_L^\vph$.

Using the isomorphism on first homology $H_1(M_L;\Z) \toiso H_1(W_C;\Z)$, the map $\varphi$ extends to maps $\varphi \colon H_1(W_C;\Z) \to A$ and $H_1(M_H;\Z) \to A$. We can therefore define the corresponding covering spaces, which we denote by $W_C^{\vph}$ and $M_H^\vph$.

\begin{proposition}\label{prop:extend2}
Let $L$ be an oriented 2-component link which is concordant to the Hopf link via a concordance $C$.  Let  $\varphi\colon H_1(M_L;\Z)\to A$ be an
admissible epimorphism onto a group of prime power order.
Suppose that we have a character $\chi \colon H_1(M_L^{\vph};\Z) \to \Z_{q^k}$ such that
$\chi|_{H_1(X_L^\vph)} \colon H_1(X_L^\vph;\Z) \to \Z_{q^k}$
extends to $\chi' \colon H_1(E_C^\vph;\Z) \to \Z_{q^l}.$
Then $\chi$ also extends to $\chi' \colon H_1(W_C^\vph;\Z) \to \Z_{q^l}$.  In other words, the map which is represented by the dotted arrow in the following commutative diagram exists.
\[\xymatrix{H_1(X_L^\vph) \ar[rr] \ar[ddd] \ar[dr] & & H_1(M_L^\vph) \ar[ddd] \ar[dl] \\
 & \Z_{p^k} \ar[d]^{\cdot q^{l-k}} & \\
 & \Z_{p^l} & \\
 H_1(E_C^\vph) \ar[ur] \ar[rr] & & H_1(W_C^\vph) \ar@{-->}[ul]}\]
\end{proposition}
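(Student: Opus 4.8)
The plan is to build the extension $H_1(W_C^\vph;\Z) \to \Z_{q^l}$ directly by combining the hypothesised extension $\chi' \colon H_1(E_C^\vph;\Z) \to \Z_{q^l}$ with the character $\chi$ on $H_1(M_L^\vph;\Z)$, and to check that these are compatible on overlaps using the Mayer--Vietoris description of $W_C^\vph$. Since $W_C^\vph = E_C^\vph \cup (X_H^\vph \times I)$, glued along $\partial X_H^\vph \times I \simeq X_H^\vph$, there is a Mayer--Vietoris exact sequence
\[H_1(X_H^\vph) \to H_1(E_C^\vph) \oplus H_1(X_H^\vph) \to H_1(W_C^\vph) \to \wt{H}_0(X_H^\vph),\]
and since $X_H^\vph$ is connected (it is a cover of the torus-times-interval $X_H$), the last term vanishes, so $H_1(W_C^\vph)$ is the pushout $H_1(E_C^\vph) \oplus_{H_1(X_H^\vph)} H_1(X_H^\vph) = H_1(E_C^\vph)$; i.e. the inclusion $E_C^\vph \to W_C^\vph$ already induces an isomorphism on $H_1$. (This also follows from the $\Z$-homology equivalences $M_L \to W_C$, $M_L \to M_L$ noted in the paragraph preceding the proposition, applied on the finite cover.) So in fact $\chi'$ on $H_1(E_C^\vph)$ \emph{is} a map on $H_1(W_C^\vph)$, and the only thing to verify is that under this identification the resulting composite $H_1(M_L^\vph) \to H_1(W_C^\vph) \to \Z_{q^l}$ agrees with $q^{l-k} \cdot \chi$.

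First I would make the identification $H_1(E_C^\vph;\Z) \toiso H_1(W_C^\vph;\Z)$ precise, either via the Mayer--Vietoris argument above or by invoking the $\Z$-homology equivalence $M_H \to W_C$ (equivalently $X_H^\vph \times I \to W_C^\vph$ induces a $\Z$-homology iso, so its inclusion does not change $H_1$, whence $E_C^\vph \hookrightarrow W_C^\vph$ does not either). Next, the map $H_1(M_L^\vph) \to H_1(W_C^\vph)$ factors as $H_1(M_L^\vph) \to H_1(W_C^\vph) \cong H_1(E_C^\vph)$, but more usefully one observes that $H_1(M_L^\vph)$ is generated by the images of $H_1(X_L^\vph)$ and $H_1(X_H^\vph)$ (again by Mayer--Vietoris for $M_L^\vph = X_L^\vph \cup_{\partial} X_H^\vph$), so it suffices to check the two composites agree after restriction along $H_1(X_L^\vph) \to H_1(M_L^\vph)$ and along $H_1(X_H^\vph) \to H_1(M_L^\vph)$. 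On $H_1(X_L^\vph)$ the agreement is exactly the hypothesis together with the commutativity of the left and top faces of the displayed cube. On $H_1(X_H^\vph)$, note that inside $W_C^\vph$ the subset $X_H^\vph$ (as the $\{0\}$-end of $X_H^\vph \times I$) is isotopic to a parallel copy lying in $\partial E_C^\vph$, hence the two maps $H_1(X_H^\vph) \to H_1(W_C^\vph)$ induced by ``$X_H^\vph \subset M_L^\vph$'' and ``$X_H^\vph \subset E_C^\vph$'' coincide; so the composite to $\Z_{q^l}$ is computed by $\chi'$ in both cases, and the two agree. Combining these two cases over a generating set gives the desired equality of maps $H_1(M_L^\vph) \to \Z_{q^l}$.

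The step I expect to be the main (though still mild) obstacle is the bookkeeping that $X_H^\vph$ really sits inside $W_C^\vph$ in two a priori different ways --- once as a piece of $\partial E_C^\vph$ and once as the $\{0\}$-slice of $X_H^\vph \times I$ --- and that these are related by a collar isotopy, so that they induce the same map on $H_1$; combined with the connectivity fact that $H_1(X_L^\vph)$ and $H_1(X_H^\vph)$ together generate $H_1(M_L^\vph)$. Once this is pinned down, the rest is a diagram chase around the given cube, using only the hypothesised extension $\chi'$ and the $\Z$-homology equivalences established before the proposition (via Lemma \ref{lemma:Zhomequiv_implies_torsion} and the 5-lemma argument). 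No new input beyond the excerpt is needed.
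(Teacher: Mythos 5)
There is a genuine error at the heart of your argument: the gluing region of $W_C^\vph = E_C^\vph \cup (X_H^\vph \times I)$ is not connected, and consequently $H_1(E_C^\vph) \to H_1(W_C^\vph)$ is \emph{not} an isomorphism. The pieces are glued along $(\partial X_H \times I)^\vph$, and $\partial X_H$ consists of \emph{two} tori, so the gluing region has two components (each connected in the cover, by admissibility); it is not ``$\simeq X_H^\vph$'' as a single connected piece. The Mayer--Vietoris sequence therefore ends in a nontrivial $H_0$-kernel term, and what one actually gets (after the transfer argument) is a short exact sequence $0 \to H_1(E_C^\vph) \to H_1(W_C^\vph) \to \Z \to 0$. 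You can see the discrepancy already without covers: $H_1(E_C;\Z) \cong H_1(X_L;\Z) \cong \Z^2$ while $H_1(W_C;\Z) \cong H_1(M_L;\Z) \cong \Z^3$, so your parenthetical appeal to the $\Z$-homology equivalence $M_L \to W_C$ in fact shows the opposite of what you want. The same mistake undermines your second reduction: $H_1(M_L^\vph)$ is \emph{not} generated by the images of $H_1(X_L^\vph)$ and $H_1(X_H^\vph)$ --- the Mayer--Vietoris sequence for $M_L^\vph = X_L^\vph \cup X_H^\vph$ (glued along the preimages of two tori) has cokernel $\Z$ on those images, and indeed Lemma \ref{Lemma:H_1X_Lvarphi_subgroup} gives $0 \to H_1(X_L^\vph) \to H_1(M_L^\vph) \to \Z \to 0$ with the quotient $\Z$ carried by a class crossing both boundary tori, which lies in neither image. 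So checking agreement on those two subgroups neither determines the extension nor verifies commutativity on all of $H_1(M_L^\vph)$.

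The correct route (and the one the paper takes) is to keep track of this extra $\Z$: run the Mayer--Vietoris sequence for $W_C^\vph$, use the transfer argument of Lemma \ref{Lemma:H_1X_Lvarphi_subgroup} together with the absence of $p$-primary torsion in $H_1(E_C^\vph;\Z)$ (via $H_*(X_L^\vph;\Z_{(p)}) \toiso H_*(E_C^\vph;\Z_{(p)})$) to reduce it to $0 \to H_1(E_C^\vph) \to H_1(W_C^\vph) \to \Z \to 0$, and compare it with $0 \to H_1(X_L^\vph) \to H_1(M_L^\vph) \to \Z \to 0$ via the vertical inclusion-induced maps. A splitting of the top sequence induces a compatible splitting of the bottom one, giving $H_1(M_L^\vph) \cong H_1(X_L^\vph) \oplus \Z$ mapping to $H_1(W_C^\vph) \cong H_1(E_C^\vph) \oplus \Z$ by $i_* \oplus \Id$; one then extends $\chi$ by using $\chi'$ on the first summand and $q^{l-k}$ times the value of $\chi$ on the $\Z$ summand. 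Your observation that the two copies of $X_H^\vph$ inside $W_C^\vph$ are collar-isotopic is fine but does not substitute for handling this extra generator.
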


\begin{proof}
We consider the exact Mayer-Vietoris sequence:
\[H_1((\partial X_L^1 \times I)^{\vph}) \oplus H_1((\partial X_L^2 \times I)^{\vph}) \to\]\[ H_1(E_C^\vph) \oplus H_1(X_H^\vph \times I) \to H_1(W_C^\vph) \to \Z \to 0,\]
where the $\Z$ term is given by
\[\ker\big(H_0((\partial X_L^1 \times I)^{\vph}) \oplus H_0((\partial X_L^2 \times I)^{\vph}) \to H_0(E_C^\vph) \oplus H_0(X_H^\vph \times I) \big).\]
We apply the same argument as in the proof of Lemma \ref{Lemma:H_1X_Lvarphi_subgroup}, noting also that the inclusion induces isomorphisms $$H_*(X_L;\Z) \toiso H_*(E_C;\Z),$$ and therefore isomorphisms $$H_*(X_L^\vph;\Z_{(p)}) \toiso H_*(E_C^\vph;\Z_{(p)}).$$  This means that we also have no $p$-primary torsion in $H_1(E_C^\vph;\Z)$, so the same argument with the transfer maps does indeed apply to show that the Mayer-Vietoris sequence becomes
\[0 \to H_1(E_C^\vph) \to H_1(W_C^\vph) \to \Z \to 0.\]
We have the commutative diagram:
\[\xymatrix{0 \ar[r] & H_1(X_L^\vph) \ar[r] \ar[d] & H_1(M_L^\vph) \ar[r] \ar[d] & \Z \ar[r] \ar@{=}[d] & 0 \\
0 \ar[r] & H_1(E_C^\vph) \ar[r] & H_1(W_C^\vph) \ar[r] & \Z \ar[r] & 0.}\]
Choosing a splitting map $\Z \to H_1(M_L^\vph)$ for the top row induces a splitting of the bottom row by commutativity, so that we have:
\[\xymatrix{H_1(X_L^\vph) \oplus \Z \ar[r]^-{\cong} \ar[d]_{i_* \oplus \Id} & H_1(M_L^\vph) \ar[d] \\
H_1(E_C^\vph) \oplus \Z \ar[r]^-{\cong} & H_1(W_C^\vph).}\]
We then see that a character which extends from $H_1(X_L^\vph)$ to $H_1(E_C^\vph)$ also extends from $H_1(M_L^\vph)$ over $H_1(W_C^\vph)$, which completes the proof.
\end{proof}

\section{Obstructions}\label{section:obstructions}

\subsection{Preparation}

We quote the following proposition, again from \cite{FP10}.  This is a consequence of Theorem \ref{Thm:main_thm_FP10}, which comes from the same paper, and it will be used in key places in the construction of our obstruction theory.  For our applications we will take $\mathcal{H}'$ to have rank $3$, and $A := \Z_{p^\a} \oplus \Z_{p^\b}$.

\begin{proposition}\label{prop:lifting_homology_to_covers}
Let $q$ be a prime.  Suppose that $S,Y$ are finite CW-complexes such that there is a map $i \colon S \to Y$ which induces an isomorphism \[i_* \colon H_*(S;\Z_q) \xrightarrow{\simeq} H_*(Y;\Z_q).\]
 Let $\phi' \colon \pi_1(Y) \to \mathcal{H}'$ be a homomorphism to a torsion-free abelian group $\mathcal{H}'$
 and let $\varphi\colon \pi_1(Y) \to \mathcal{H}' \to A$ be an epimorphism onto a finite abelian group which factors through $\phi'$.
Let $Y^\varphi$ be the induced  cover of $Y$,
 let $\phi \colon \pi_1(Y^\varphi) \to \mathcal{H}'$ be the restriction of $\phi'$, let $\mathcal{H} \subseteq \mathcal{H}'$ be the image of $\phi$, and let $\a' \colon \pi_1(Y^\varphi) \to \GL(d,Q)$ be a $d$-dimensional representation to a field $Q$ of characteristic zero, such that $\a'$ restricted to the kernel of $\phi$ factors through a $q$-group.  Define $S^\varphi$ to be the pull-back cover $S^\varphi := i^*(Y^\varphi)$.  Then \[ i_*^\vph \colon H_*(S^\varphi;Q(\mathcal{H})^d) \xrightarrow{\simeq} H_*(Y^\varphi;Q(\mathcal{H})^d)\]
 is an isomorphism.
\end{proposition}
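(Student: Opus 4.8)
The plan is to deduce this from Theorem \ref{Thm:main_thm_FP10} by the standard chain-homotopy-lifting argument, applied to the mapping cone (relative chain complex) of the map $i$. First I would replace $i \colon S \to Y$ by an inclusion of CW-complexes, up to homotopy equivalence, so that one may speak of the relative chain complex $C_* := C_*(Y,S;\Z[A])$ of the $A$-cover (equivalently, the $\Z[A]$-cellular chain complex of the pair). This is a bounded complex of free $\Z[A]$-modules, hence of projective $\Z[\pi]$-modules where $\pi = \pi_1(Y)$ acts via $\varphi$. The hypothesis that $i_*\colon H_*(S;\Z_q) \to H_*(Y;\Z_q)$ is an isomorphism says exactly that $\Z_q \otimes_{\Z} C_* = \Z_q \otimes_{\Z[A]} C_*(Y,S;\Z[A])$ is acyclic; more relevantly, since $\Z_q \otimes_{\Z[A]}(-) = \Z_q \otimes_{\Z[\pi]}(-)$ for the regular-representation-type action, the differentials of $\Z_q \otimes_{\Z[\pi]} C_*$ have the property that the relevant ``inclusion of a boundary complex'' maps are injective after tensoring with $\Z_q$.

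The key step is then to promote acyclicity of $C_*$ over $\Z_q$ to acyclicity of $Q(\HH)^d \otimes_{\Z[\pi]} C_*$, where $\pi_1(Y^\varphi)$ acts on $Q(\HH)^d = Q^d \otimes_Q Q[\HH]$ via $\a'$ and $\phi$ as in the paragraph preceding Theorem \ref{Thm:main_thm_FP10}; restricting $C_*$ to a complex of projective $\Z[\pi_1(Y^\varphi)]$-modules is harmless since $\pi_1(Y^\varphi) \le \pi_1(Y)$ has finite index. Because $C_*$ is a bounded complex of projectives, it suffices to show that each differential $d_n \colon C_n \to C_{n-1}$, or rather the induced map onto its image, stays injective after applying $Q(\HH)^d \otimes_{\Z[\pi_1(Y^\varphi)]}(-)$. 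For this I would split $C_*$ into short exact sequences $0 \to Z_n \to C_n \to B_{n-1} \to 0$ of $\Z_q$-acyclicity data, and feed the relevant maps — which are injective after $\Z_q \otimes_{\Z[\pi_1(Y^\varphi)]}(-)$ precisely because the complex is $\Z_q$-acyclic and the modules are projective — into Theorem \ref{Thm:main_thm_FP10}, whose hypothesis ``$\a'|_{\ker\phi}$ factors through a $q$-group'' is assumed. Theorem \ref{Thm:main_thm_FP10} then yields injectivity of each such map after tensoring up to $Q(\HH)^d$, and hence acyclicity of $Q(\HH)^d \otimes_{\Z[\pi_1(Y^\varphi)]} C_*$; since $Q(\HH)^d \otimes_{\Z[\pi_1(Y^\varphi)]} C_*(Y,S;\Z[A])$ computes $H_*(Y^\varphi; Q(\HH)^d)$ relative to $H_*(S^\varphi; Q(\HH)^d)$, this gives the desired isomorphism $i^\varphi_*$. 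As a reference for the chain-homotopy-lifting bookkeeping I would cite \cite[Proposition~2.10]{COT}, exactly as in the proof of Lemma \ref{lemma:Zhomequiv_implies_torsion} and in \cite[Proposition~4.1]{FP10}.

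The main obstacle — really the only nontrivial point — is verifying that the $\Z_q$-homology isomorphism hypothesis translates into the injectivity hypothesis of Theorem \ref{Thm:main_thm_FP10} for the correct maps $f \colon M \to N$ of projective $\Z[\pi_1(Y^\varphi)]$-modules, i.e. identifying which maps in the chain complex need to be injective and checking that $\Z_q$-acyclicity plus projectivity delivers exactly that. Once the complex is resolved into short exact sequences of projectives, this is a diagram chase: $\Z_q$-acyclicity means $\Z_q \otimes B_n \to \Z_q \otimes Z_n$ is an isomorphism and in particular injective, and each $B_n$, $Z_n$ is projective (a direct summand of $C_n$, using that the short exact sequences of projectives split), so Theorem \ref{Thm:main_thm_FP10} applies verbatim with $M = B_n$, $N = Z_n$ (or $N = C_n$). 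The rest — that tensoring a bounded complex of projectives with a ring and getting all connecting maps injective forces acyclicity, and that the algebraic conclusion is the topological statement about $H_*(S^\varphi;Q(\HH)^d) \to H_*(Y^\varphi;Q(\HH)^d)$ — is routine homological algebra that I would not spell out in detail, deferring to the cited references.
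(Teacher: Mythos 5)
Your overall strategy is the intended one (the paper itself only quotes this proposition from \cite{FP10}, noting it follows from Theorem \ref{Thm:main_thm_FP10} via the chain-homotopy-lifting argument of \cite[Proposition~2.10]{COT}, exactly as invoked for Lemma \ref{lemma:Zhomequiv_implies_torsion}), but the step you yourself single out as the only nontrivial point is not correctly executed. Over the group ring the relative complex $C_*$ is \emph{not} acyclic, so its cycles $Z_n$ and boundaries $B_n$ are honestly different submodules; they are not projective in general (submodules of projectives over $\Z[\pi]$ need not be projective), so Theorem \ref{Thm:main_thm_FP10} cannot be fed the maps $B_n \to Z_n$ or $B_n\to C_n$; the sequences $0 \to Z_n \to C_n \to B_{n-1} \to 0$ split only if $B_{n-1}$ is projective, which is exactly what you do not know; and since $\Z_q \otimes_{\Z[\pi]} -$ is only right exact, $\Z_q \otimes B_n$ and $\Z_q \otimes Z_n$ are not the boundaries and cycles of $\Z_q \otimes C_*$, so ``$\Z_q$-acyclicity gives an isomorphism $\Z_q \otimes B_n \to \Z_q\otimes Z_n$'' is unjustified. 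The place where Theorem \ref{Thm:main_thm_FP10} is actually inserted in the quoted argument is different: $\Z_q\otimes C_*$ is a bounded acyclic complex of $\Z_q$-vector spaces, hence contractible; one lifts a chain contraction $s$ to a degree-one map $\tilde s$ on $C_*$ (using projectivity of the $C_n$) and applies the injectivity theorem to the chain map $f = d\tilde s + \tilde s d \colon C_n \to C_n$, which becomes the identity after $\Z_q\otimes-$. Injectivity of $Q[\HH]^d \otimes f$, together with finite dimensionality over the quotient field $Q(\HH)$, makes $f$ a null-homotopic automorphism of $Q(\HH)^d\otimes C_*$, which forces acyclicity. Applying the theorem to the differentials, even if one could, would not by itself yield acyclicity of the tensored complex.

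There is a second, related gap: the group ring mismatch. Your hypothesis translates into acyclicity of $\Z_q \otimes_{\Z[\pi_1(Y)]} C_*$ (the action factoring through $A$ with trivial $\Z_q$-module), whereas $\a'$ and $\phi$ are only defined on $\pi_1(Y^\vph)$. ``Restricting to $\Z[\pi_1(Y^\vph)]$'' is not harmless here: if the theorem is applied with the group $\pi_1(Y^\vph)$, its hypothesis requires injectivity after $\Z_q \otimes_{\Z[\pi_1(Y^\vph)]}(-)$, i.e.\ essentially that $C_*(Y^\vph,S^\vph;\Z_q)$ is acyclic, and this does \emph{not} follow from the hypothesis when $|A|$ is not a power of $q$ (for instance, multiplication by $1+t+t^2$ on $\Z[t^{\pm 1}]$ becomes an isomorphism after $\Z_2\otimes_{\Z[t^{\pm1}]}-$, but after restricting to the index-three subgroup it becomes multiplication by a zero divisor in $\Z_2[\Z_3]$). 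The correct bookkeeping is to apply Theorem \ref{Thm:main_thm_FP10} with $\pi = \pi_1(Y)$, using the representation of $\pi_1(Y)$ induced from $\a' \otimes \phi$ (Shapiro's lemma identifies the resulting homology with $H_*(Y^\vph,S^\vph;Q(\HH)^d)$), and checking that the induced representation restricted to $\ker \phi'$ still factors through a $q$-group; this induction step is part of the content of the quoted proof in \cite{FP10} and is missing from your plan.
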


We now set the stage for the definition of our invariant.
Let $L = L_1 \cup L_2 $ be a two--component oriented link with $\lk(L_1,L_2) = 1$.  Choose a homomorphism
\[\phi' \colon \pi_1(M_L) \to H_1(M_L) \toiso \mathcal{H}'\]
onto a free abelian group of rank 3.  Choose a prime $p$, a $p$-group $A = \Z_{p^\a} \oplus \Z_{p^\b}$ for integers $\a,\b$, and an admissible homomorphism $\vph \colon \pi_1(M_L) \to H_1(M_L;\Z) \to A$.
Define \[\phi \colon \pi_1(M_L^\vph) \to \mathcal{H}'\]
to be the restriction of $\phi'$ to $$\pi_1(M_L^\vph) \cong \ker(\pi_1(M_L) \to H_1(M_L) \xrightarrow{\vph} A)$$ and let $\mathcal{H} \subseteq \mathcal{H}'$ be the image of $\phi$.  As a finite index subgroup $\mathcal{H}$ is also free abelian of rank 3.  Finally, we choose a homomorphism
\[\chi \colon \pi_1(M_L^\vph) \to H_1(M_L^\vph) \to \Z_{q^k},\]
where $q$ is a prime and $k \geqslant 1$ is an integer.  In this section, we proceed as follows.  We will define an invariant of the isotopy class of $L$, $\tau(L,\chi) := \tau(L,\phi',\vph,\chi)$, as a Witt class of a certain combination of intersection forms on a 4-manifold $W$ whose boundary is a collection of copies of $M_L \sqcup -M_H$.  We will then show that our invariant is well-defined, in that it does not depend on the choices we make to initially define it.  Our main theorem is then that this invariant gives concordance obstructions for certain judicious choices of the representation $\chi$.

\subsection{Definition of $\tau(L,\chi)$}\label{section:deftau}

Note that $X_J$, for $J=L,H$, is an irreducible $3$-manifold. It is well--known that the result of gluing two irreducible 3--manifolds along incompressible tori is again
 irreducible. It follows that  $M_L$ is also irreducible.

The homomorphism $\phi \colon \pi_1(M_L^\vph) \to \HH$ induces a map $$\phi_* \colon H_i(\pi_1(M_L^\vph);\Z) \to H_i(\mathcal{H};\Z)$$ for all $i$.
Since covering maps induce isomorphisms on homotopy groups, by the Papakyriakopoulos sphere theorem and the fact that $M_L$ is irreducible, we have that $M_L^\vph$ is an Eilenberg-Maclane space $K(\pi_1(M_L^\vph),1)$. Therefore $$H_i(\pi_1(M_L^\vph);\Z) \cong H_i(M_L^\vph;\Z).$$

\begin{claim}
The map $\phi\colon \pi_1(M_L^\varphi)\to \HH$ gives rise to an isomorphism \\$\phi_*\colon H_3(M_L^\varphi;\Z)\to H_3(\HH;\Z)$.
\end{claim}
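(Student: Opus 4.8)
We want to show $\phi_*\colon H_3(M_L^\vph;\Z)\to H_3(\HH;\Z)$ is an isomorphism. Since $M_L^\vph$ is a closed orientable $3$-manifold (a finite cover of the closed orientable $3$-manifold $M_L$), we have $H_3(M_L^\vph;\Z)\cong\Z$, generated by the fundamental class $[M_L^\vph]$. On the other hand, $\HH\cong\Z^3$, so $H_3(\HH;\Z)\cong H_3(T^3;\Z)\cong\Z$ as well. Thus both groups are infinite cyclic and it suffices to show that $\phi_*$ sends a generator to a generator, equivalently that $\phi_*[M_L^\vph]$ is a generator of $H_3(\HH;\Z)\cong\Z$.

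First I would reduce the computation of $\phi_*[M_L^\vph]$ to a degree statement. The map $\phi\colon\pi_1(M_L^\vph)\to\HH$ is realized (up to homotopy) by a map $g\colon M_L^\vph\to K(\HH,1)=T^3$, and since $M_L^\vph$ is a $K(\pi_1(M_L^\vph),1)$, $\phi_*$ on $H_3$ is exactly $g_*$ on $H_3$, i.e. multiplication by $\deg g$. So the claim is equivalent to $\deg g=\pm 1$. The key input is that, by construction, $\HH=\im\phi\subseteq\HH'=H_1(M_L;\Z)\cong\Z^3$ is a finite-index subgroup, and that $\phi$ is (a restriction of) the Hurewicz-type map composed with the inclusion $\HH\hookrightarrow\HH'$. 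Concretely, the composite $\pi_1(M_L^\vph)\xrightarrow{\phi}\HH\hookrightarrow\HH'$ equals $\pi_1(M_L^\vph)\hookrightarrow\pi_1(M_L)\to H_1(M_L)=\HH'$, which on $H_1$ is just the map $p_*\colon H_1(M_L^\vph;\Z)\to H_1(M_L;\Z)$ induced by the covering projection $p\colon M_L^\vph\to M_L$ (followed by Hurewicz for $M_L$, which is an iso onto $\HH'$ since we can take $\phi'$ to be the Hurewicz map).

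Now I would argue via the covering. Let $r\colon M_L\to T^3$ classify the Hurewicz map $\pi_1(M_L)\to H_1(M_L)=\HH'$; this has some degree $d_0$. Actually the cleaner route: $\HH\subseteq\HH'$ of index $N$ corresponds to a finite cover $q\colon T^3_\HH\to T^3_{\HH'}$ of degree $N$, and $g$ is a lift through $q$ of the composite $M_L^\vph\xrightarrow{p}M_L\xrightarrow{r}T^3_{\HH'}$. We have $\deg(p)=|A|$ (the order of the deck group) and we need $\deg(r)$. But $r_*\colon H_3(M_L;\Z)\to H_3(T^3_{\HH'};\Z)$ is, by the same $K(\pi,1)$ reasoning applied to $M_L$ together with the fact that $r$ induces an isomorphism on $H_1$ and that $H_*(T^3_{\HH'})$ is generated by products of $H_1$-classes under cup product (so its cohomology, hence homology, is detected by $H^1$), an isomorphism; hence $\deg(r)=\pm 1$. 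Actually I should be careful: $r$ inducing an isomorphism on $H_1$ forces an isomorphism on $H^1$ and therefore, since the cohomology ring of $T^3$ is generated in degree $1$, a surjection on $H^3$, hence $\deg r=\pm1$. Then $\deg(r\circ p)=\pm|A|$, and since this degree also equals $\deg(q)\cdot\deg(g)=N\cdot\deg(g)$, we get $N\mid |A|$ and $\deg(g)=\pm|A|/N$. To finish I must identify $N$, the index of $\HH$ in $\HH'$: by the first paragraph, $\HH=\im(p_*\colon H_1(M_L^\vph;\Z)\to H_1(M_L;\Z))$, and a standard transfer argument (or the exact sequence of the covering) shows this image has index exactly equal to the size of the deck group that acts nontrivially on $H_1$; in fact for an abelian cover classified by $\vph$, the cokernel of $p_*$ on $H_1$ is exactly $A$, so $N=|A|$, giving $\deg(g)=\pm1$ as desired.

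\textbf{Main obstacle.} The delicate point is the last step: pinning down that the index $[\HH':\HH]$ equals $|A|$, i.e. that $\coker(p_*\colon H_1(M_L^\vph;\Z)\to H_1(M_L;\Z))\cong A$. This is where admissibility and the prime-power hypotheses could in principle enter, and one must be careful that $p_*$ is not merely injective-with-finite-cokernel but has cokernel of the expected size; the clean way is to use the five-term exact sequence $H_2(A)\to H_1(M_L^\vph)_A\to H_1(M_L)\to A\to 0$ (coming from $1\to\pi_1(M_L^\vph)\to\pi_1(M_L)\to A\to 1$ and $H_1$ of $M_L^\vph$ with the conjugation action) to get surjectivity of $H_1(M_L)\to A$ with kernel exactly $\im(p_*)$ after coinvariants — but then one needs that $\HH=\im\phi$ coincides with $\im(p_*)$ rather than its image in coinvariants, which holds because $\phi$ factors through $H_1(M_L^\vph)$ by definition. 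I would present this carefully, and if the cokernel computation is fiddly I would instead argue directly: $\phi_*[M_L^\vph]$ pairs with the generator $\omega\in H^3(\HH;\Z)$, and $\phi^*\omega\in H^3(M_L^\vph;\Z)$ is the triple cup product of the three generating classes in $H^1(M_L^\vph;\Z)$ pulled back from $\HH$; evaluating on $[M_L^\vph]$ gives $\pm1$ exactly because those three classes restrict, on the sub-torus coming from the $S^1\times S^1\times S^1=M_H^\vph$-type piece or from a section, to a basis — this is essentially the computation that $M_L$ has the same $H^1$-cup-product structure as $T^3$, which follows from $M_H=T^3$ and $M_L\to W_C\leftarrow M_H$ being $\Z$-homology equivalences when $L$ is concordant to $H$; but since the claim is stated for arbitrary $L$ (no concordance assumed), the right argument is the homological/degree one above, using only that $M_L$ is a closed orientable $3$-manifold whose Hurewicz map is an isomorphism onto $\HH'\cong\Z^3$.
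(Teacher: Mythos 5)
Your reduction to a degree statement and the covering bookkeeping are fine: $g$ lifts $r\circ p$ through the $[\HH':\HH]$-fold cover $T_{\HH}\to T_{\HH'}$, $\deg p=|A|$, and $[\HH':\HH]=|A|$ is immediate (not fiddly) because $\vph$ factors through $H_1(M_L)$, so $\HH=\im\bigl(\pi_1(M_L^\vph)\to H_1(M_L)\bigr)=\ker\bigl(\vph\colon \HH'\to A\bigr)$. That part is essentially the paper's passage to the finite cover (done there with transfer maps on $H_3$) in different language.

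The genuine gap is the step $\deg r=\pm 1$. You argue that since $r_*$ is an isomorphism on $H_1$ and $H^*(T^3;\Z)$ is generated in degree one, $r^*$ is onto $H^3$; but generation in degree one only says the generator of $H^3(T^3)$ is a triple product of degree-one classes, so its pullback is a triple product of degree-one classes in $H^3(M_L)$ --- and nothing in your hypotheses forces that product to generate (or even be nonzero). Concretely, for $M=(S^1\times S^2)\,\#\,(S^1\times S^2)\,\#\,(S^1\times S^2)$ the classifying map to $T^3$ induces an isomorphism on $H_1$ yet has degree $0$, since all triple cup products of one-dimensional classes vanish. So your closing assertion that the argument ``uses only that $M_L$ is a closed orientable $3$-manifold whose Hurewicz map is an isomorphism onto $\Z^3$'' cannot be correct. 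The missing ingredient is exactly what the paper invokes at this point: the triple cup product form on $H^1(M_L;\Z)$ agrees with that of the $3$-torus, i.e.\ a basis of $H^1(M_L;\Z)$ has triple product evaluating to $\pm 1$ on $[M_L]$. This is a statement about the specific manifold $M_L=X_L\cup_{\partial X_L}X_H$ with linking number one (no concordance hypothesis is needed, contrary to your worry), and it must be proved from that structure; once it is in place, $\deg r=\pm1$ and your covering computation (or the paper's transfer argument) completes the proof.
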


First note that the map $$\phi' \colon \pi_1(M_L) \to \HH'$$ allows us to define a map $M_L \to T$, by elementary obstruction theory, since $T$ is a $K(\HH',1)$.  Both $M_L$ and $T$ have the same cup product structure with $\Z$ coefficients, which easily implies that $$\phi'^* \colon H^3(\HH';\Z) = H^3(T;\Z) \toiso H^3(M_L;\Z) = H^3(\pi_1(M_L);\Z)$$ is an isomorphism.  By the universal coefficient theorem and Poincar\'{e} duality, $$TH_2(M_L;\Z) \cong TH^1(M_L;\Z) \cong TH_0(M_L;\Z) \cong 0.$$
Similarly,
$$TH_2(T;\Z) \cong 0.$$  This implies, again by the universal coefficient theorem, making choices of fundamental classes $[M_L]$ and $[T]$, that $$H^3(M_L;\Z) \cong H_3(M_L;\Z)$$ and $$H^3(T;\Z) \cong H_3(T;\Z).$$  The representation $\phi'$ therefore induces an isomorphism $$\phi'_* \colon H_3(M_L;\Z) \toiso H_3(T;\Z).$$  We then restrict to the finite index subgroups $\pi_1(M_L^\vph) \subseteq \pi_1(M_L)$ and $\HH \subseteq \HH'$.  The transfer maps induce isomorphisms
 \[t_* \colon H_3(M_L;\Z) \toiso H_3(M_L^\vph;\Z)\]
and
\[t_* \colon H_3(T;\Z) \toiso H_3(T^\vph;\Z),\]
where we denote by $T^\vph$ the corresponding finite cover of $T$, which is nonetheless abstractly homeomorphic to $T$.
We thus induce an isomorphism $$H_3(\pi_1(M_L^\vph);\Z) = H_3(M_L^\vph;\Z) \toiso H_3(\HH;\Z).$$
This concludes the proof of the claim.

Now let $T := S^1 \times S^1 \times S^1$ be the 3-torus. It follows from the above claim that there exists an isomorphism $\psi \colon \pi_1(T) \cong H_1(T;\Z) \toiso \HH$ such that
\begin{equation}\label{eqn:condition_on_psi}
\phi_*([M_L^\vph]) = \psi_*([T]) \in H_3(\HH) \cong \Z.
\end{equation}

We now follow the Casson-Gordon program \cite{CassonGordon} for defining obstructions.
We first have the following claim:

\begin{claim}
There exists an integer $s \geqslant 0$ and a 4-manifold $W$ with a representation $\kappa \times \Phi \colon \pi_1(W) \to \Z_{q^k} \times \HH$ and an orientation preserving homeomorphism
\[I \colon \bigsqcup_{q^s}\, (M_L^\vph \sqcup -T)  \to \partial W\]
such that, restricted to each component of the boundary,
\[(\kappa \times \Phi)| \circ I_* = \chi \times \phi \colon \pi_1(M_L^\vph) \to \Z_{q^k} \times \HH \]
and
\[(\kappa \times \Phi)| \circ I_* = \tr \times \psi \colon \pi_1(T) \to \Z_{q^k} \times \HH,\]
where $\tr$ is the trivial homomorphism.
\end{claim}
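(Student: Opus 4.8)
The plan is to realize the character $\chi \times \phi \colon \pi_1(M_L^\vph) \to \Z_{q^k} \times \HH$ by a bordism and then perform surgery to match the target manifold $T$ on the other end. First I would observe that $(M_L^\vph, \chi \times \phi)$ and $(T, \tr \times \psi)$ both represent classes in the bordism group $\Omega_3(\Z_{q^k} \times \HH)$, equivalently (after splitting off the $K(\Z_{q^k},1)$ and $K(\HH,1)=T$ factors) in $\Omega_3$ of a product of Eilenberg--MacLane spaces. Since $\Omega_3(B\Z_{q^k} \times T)$ is finite -- one can see this from the Atiyah--Hirzebruch spectral sequence, as $\Omega_3 = \Omega_3^{\mathrm{SO}} = 0$ and the relevant reduced bordism groups of $B\Z_{q^k}$ and $T$ are finite $q$-torsion and finitely generated respectively, so after tensoring appropriately the whole thing is finite of $q$-power order after removing the $\HH$-part -- there is an integer $s \geqslant 0$ such that $q^s$ copies of the difference $(M_L^\vph, \chi\times\phi) - (T, \tr\times\psi)$ is null-bordant. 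This produces a compact oriented $4$-manifold $W_0$ with a map to $B\Z_{q^k} \times T$, i.e. a representation $\kappa\times\Phi\colon \pi_1(W_0)\to \Z_{q^k}\times\HH$, and an orientation preserving homeomorphism $I\colon \bigsqcup_{q^s}(M_L^\vph \sqcup -T) \to \partial W_0$ restricting to the prescribed data on each boundary component.

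The one subtlety is the condition \eqref{eqn:condition_on_psi}: I must ensure that the fundamental class of $M_L^\vph$ and that of $T$ map to the \emph{same} generator of $H_3(\HH)\cong\Z$, so that the $\HH$-components of the two ends actually cancel in $\Omega_3(B\Z_{q^k}\times T)$ rather than adding. This is exactly why $\psi$ was chosen in \eqref{eqn:condition_on_psi}: with that choice, $\phi_*[M_L^\vph] = \psi_*[T]$ in $H_3(\HH)$, so the $H_3(T)$-components of the two bordism classes over the $T=K(\HH,1)$ factor are equal, and their difference lies in the subgroup of $\Omega_3(B\Z_{q^k}\times T)$ on which the $\HH$-fundamental class vanishes; I would note that this subgroup is genuinely finite (the only possibly infinite part of $\Omega_3(B\Z_{q^k}\times T)$ being detected by $H_3(T)$), which legitimizes passing to $q^s$ copies.

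I expect the main obstacle to be the bookkeeping needed to guarantee that the relevant bordism group is finite of $q$-power order after one has quotiented out the $H_3$-fundamental-class invariant -- i.e. being careful about which summands of $\Omega_3(B\Z_{q^k}\times S^1\times S^1\times S^1)$ survive, since $H_1$ and $H_3$ of $T$ contribute free $\Z$-summands to the $E^2$-page of the Atiyah--Hirzebruch spectral sequence. Concretely I would filter by the $T$-factor: the $H_3(T)$ line is handled by condition \eqref{eqn:condition_on_psi}, and the $H_1(T)\otimes \widetilde\Omega_2(B\Z_{q^k})$ and $H_2(T)\otimes\widetilde\Omega_1$ type contributions, together with $\widetilde\Omega_3(B\Z_{q^k})$, are all finite of $q$-power order (standard computations of the bordism of lens-space-type spaces), so multiplying by a suitable $q^s$ kills everything that is left. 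Once $W_0$ is in hand, the homeomorphism $I$ and the representation $\kappa\times\Phi$ are read off directly, completing the construction; one may additionally arrange $\kappa\times\Phi$ to be defined on all of $\pi_1(W_0)$ rather than just on a quotient by taking $W=W_0$ as is, since the map to the Eilenberg--MacLane space $B\Z_{q^k}\times T$ furnishes the homomorphism on $\pi_1(W)$ automatically.
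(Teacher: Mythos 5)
Your proposal is correct and follows essentially the same route as the paper: interpret $(M_L^\vph,\chi\times\phi)\sqcup -(T,\tr\times\psi)$ as a class in $\Omega_3(\Z_{q^k}\times\HH)$, compute this group via the Atiyah--Hirzebruch spectral sequence (and a K\"unneth-type bookkeeping) to see that, once the $H_3(\HH)$-component vanishes by the choice of $\psi$ in \eqref{eqn:condition_on_psi}, the class is $q$-primary torsion, so $q^s$ copies bound a $4$-manifold carrying the required representation. The only cosmetic difference is that the paper collapses the spectral sequence immediately using $\Omega_1=\Omega_2=\Omega_3=0$ to get $\Omega_3(\Z_{q^k}\times\HH)\cong H_3(\Z_{q^k}\times\HH;\Z)$ and then applies K\"unneth, whereas you filter by the $T$-factor; the content is the same.
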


First note that
\[(M_L^\vph, \chi \times \phi) \cup -(T,\tr \times \psi) \]
defines an element in $\Omega_3(\Z_{q^k} \times \HH)$.
By the Atiyah-Hirzebruch spectral sequence,
\[\Omega_3(\Z_{q^k} \times \HH) \cong H_3(\Z_{q^k} \times \HH ;\Omega_0)  \cong H_3(\Z_{q^k} \times \HH ;\Z).\]
By the K\"{u}nneth theorem,
\[ H_3(\Z_{q^k} \times \HH)  \cong  \bigoplus_{j=0}^3 \, H_j(\Z_{q^k}) \otimes H_{3-j}(\HH) \]
 \[ \ba{rcl} &\cong&  (H_0(\Z_{q^k}) \otimes H_3(\HH)) \oplus (H_1(\Z_{q^k}) \otimes H_2(\HH)) \oplus (H_3(\Z_{q^k}) \otimes H_0(\HH)) \\
 &\cong&  (\Z \otimes H_3(\HH)) \oplus (\Z_{q^k} \otimes \Z^3) \oplus (\Z_{q^k} \otimes \Z)\ea\]
since $H_2(\Z_{q^k};\Z) \cong 0$.  Therefore
\[\Omega_3(\Z_{q^k} \times \HH) \cong H_3(\HH;\Z) \oplus \bigoplus_4\, \Z_{q^k}.\]
By (\ref{eqn:condition_on_psi}), the orientations are such that the image of
\[(M_L^\vph, \chi \times \phi) \cup -(T,\tr \times \psi)\] in $H_3(\HH;\Z)$ is zero.  Therefore $M_L \sqcup -T$, with the representations, is $q$-primary torsion in $\Omega_3(\Z_{q^k} \times \HH)$.
This concludes the proof of the claim.

We choose an inclusion $\Z_{q^k} \hookrightarrow S^1 \subseteq \C$.  Then, using the composition of ring homomorphisms
\[\Z[\pi_1(W)] \to \Z[\Z_{q^k} \times \HH] = \Z[\Z_{q^k}][\HH] \to \Z[\xi_{q^k}][\HH] \to \Q(\xi_{q^k})(\HH),\]
where $\xi_{q^k} \in S^1 \subseteq \mathbb{C}$ is a primitive $q^k$-th root of unity, we can define the twisted intersection form $\lambda_{\Q(\xi_{q^k})(\HH)}$ to be the non-singular part of the middle dimensional intersection form on $H_2(W;\Q(\xi_{q^k})(\HH))$.  If we also consider the non-singular part of the ordinary intersection form $\lambda_\Q$ on rational homology, we obtain another intersection form, which we can also consider as a form of $\Q(\xi_{q^k})(\HH)$-modules by tensoring up via the inclusion $\Q \to \Q(\xi_{q^k})(\HH)$. We define $\tau(L,\chi)$ to be
\[\big(\lambda_{\Q(\xi_{q^k})(\HH)}(W) - \Q(\xi_{q^k})(\HH) \otimes_{\Q} \lambda_\Q\big) \otimes \frac{1}{q^s} \in L^0(\Q(\xi_{q^k})(\HH)) \otimes_{\Z} \Z[1/q].\]

We first show that $\tau(L,\chi)$ is well-defined.  As usual when defining such invariants, the following lemma simplifies several key arguments.  In particular, it implies that the intersection form $\lambda_{\Q(\xi_{q^k})(\HH)}(W)$ is non-singular on the whole of $H_2(W;\Q(\xi_{q^k})(\HH))$.  When $J=L$ we use $\a' = \chi$ and when $J=H$ we use the trivial representation for $\a'$.

\begin{lemma}\label{lemma:homology_M_L_vanishes}
Let $p,q$ be primes.  For any two-component link $J$ with $\lk(J_1,J_2) = 1$, with an isomorphism $\phi' \colon H_1(M_J;\Z) \to \HH'$, a homomorphism $\vph \colon H_1(M_J;\Z) \to \HH' \to A$ to a finite abelian $p$-group, and a representation $$\a' \colon H_1(M_J^\vph;\Z) \to \Z_{q^k} \hookrightarrow S^1 \subset \C,$$ we have that
\[H_*(M_J^\vph;\Q(\xi_{q^k})(\HH)) \cong 0,\]
where $\xi_{q^k}$ is a primitive $q^k$th root of unity.
\end{lemma}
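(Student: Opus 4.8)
The plan is to compute the homology of $M_J^\vph$ with coefficients in the skew field $\Q(\xi_{q^k})(\HH)$ by comparing $M_J^\vph$ with the 3-torus $T$, whose homology with these coefficients is easy to compute directly, and then transferring the vanishing statement across an appropriate homology equivalence. The key point is that $\HH \cong \Z^3$ has rank equal to $\dim M_J^\vph$, so the "correct" coefficient system forces the homology to vanish — this is exactly the phenomenon one exploits in the Casson--Gordon setting, where $L^2$-style or $\Q(\xi)(\HH)$-coefficient homology of aspherical $3$-manifolds with a full-rank abelian representation dies.

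First I would observe that for $J = H$ the manifold $M_H^\vph$ is literally a 3-torus $T^\vph \cong T$, and the representation $\a'$ is trivial, so the coefficient system is just $\Q(\xi_{q^k})(\HH)$ regarded as a module over $\Z[\pi_1(T)] = \Z[\HH]$ via (a finite-index inclusion followed by) the tautological map $\Z[\HH] \to \Q(\xi_{q^k})(\HH)$. Since $\HH$ acts on $\Q(\xi_{q^k})(\HH)$ by multiplication by units that generate a copy of $\Z^3$ acting freely, the chain complex $C_*(\widetilde{T}) \otimes_{\Z[\HH]} \Q(\xi_{q^k})(\HH)$ is the Koszul complex of three commuting units $t_1 - 1, t_2 - 1, t_3 - 1$ in the field $\Q(\xi_{q^k})(\HH)$, each of which is invertible. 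Hence $H_*(T;\Q(\xi_{q^k})(\HH)) \cong 0$. For the general case $J = L$, I would invoke Proposition \ref{prop:lifting_homology_to_covers}: the inclusion of $X_H$ into $M_L$, or more to the point a map realizing the $\Z$-homology equivalence, together with the map $M_L \to T$ constructed by obstruction theory from $\phi'$ (used already in the proof of the $H_3$-isomorphism claim), induces an isomorphism $H_*(M_L;\Z_q) \xrightarrow{\simeq} H_*(T;\Z_q)$ because $M_L$ and $T$ have the same $\Z$-homology (both $\cong H_*(T^3;\Z)$). The hypothesis of Proposition \ref{prop:lifting_homology_to_covers} that $\a'$ restricted to $\ker(\phi)$ factors through a $q$-group holds: $\a'$ has image in $\Z_{q^k}$, so its restriction to any subgroup factors through a $q$-group. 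Applying the proposition with $S = T$, $Y = M_L$ (or vice versa, whichever direction the map goes), $Q = \Q(\xi_{q^k})$, $d = 1$, yields
\[
H_*(M_L^\vph;\Q(\xi_{q^k})(\HH)) \cong H_*(T^\vph;\Q(\xi_{q^k})(\HH)) \cong 0,
\]
where the last vanishing is the $J=H$ computation just performed (noting $T^\vph \cong T$ abstractly, and the restricted representation on $\pi_1(T^\vph)$ is still trivial).

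The main obstacle is making sure the hypotheses of Proposition \ref{prop:lifting_homology_to_covers} are genuinely met in the direction needed — in particular, that there is a map inducing a $\Z_q$-homology isomorphism between $M_L$ and $T$ compatible with the chosen $\phi'$ and $\vph$, so that the pulled-back cover $S^\vph$ really is $T^\vph$ with the trivial character. This requires using that $\phi' \colon H_1(M_L;\Z) \to \HH'$ is an isomorphism, so the obstruction-theoretic map $M_L \to T$ induces an isomorphism on $H_1$ and, as shown in the $H_3$-claim, on all of $H_*(-;\Z)$; hence it induces a $\Z_q$-homology isomorphism, and the pullback of the $\vph$-cover of $T$ along it is exactly $M_L^\vph$. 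One must also check that $\vph$ on $\pi_1(M_L)$ corresponds under this map to a character on $\pi_1(T)$ — but this is automatic since $\vph$ factors through $\phi'$ by construction. A small subtlety worth spelling out is that the twisted homology $H_*(T^\vph;\Q(\xi_{q^k})(\HH))$ is computed using the restriction $\phi|_{\pi_1(T^\vph)}$ with image $\HH$, i.e. a finite-index subgroup of $\HH'$; this changes nothing since $\HH \cong \Z^3$ still acts by a free $\Z^3$ of multiplication-units, so the Koszul complex argument applies verbatim. Once these identifications are in place the vanishing is immediate, and this is genuinely the simplification (non-singularity of $\lambda_{\Q(\xi_{q^k})(\HH)}(W)$ on all of $H_2(W)$) that the lemma is designed to provide.
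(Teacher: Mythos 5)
Your Koszul computation for the $3$-torus is fine, but the reduction of the general case to it has a genuine gap, and it lies in the direction in which Proposition \ref{prop:lifting_homology_to_covers} transports coefficients. In that proposition the representation $\a'$ is defined on $\pi_1(Y^\vph)$, i.e.\ on the \emph{target} of the map $i \colon S \to Y$, and the coefficient system on $S^\vph$ is its pullback. If you use the obstruction-theoretic map $M_L \to T$ (so $S = M_L$, $Y = T$), the only characters you can feed into the proposition are those defined on $\pi_1(T^\vph) \cong \HH$, and the conclusion computes $H_*(M_L^\vph;\Q(\xi_{q^k})(\HH))$ only for coefficient systems pulled back from $T^\vph$, i.e.\ essentially for $\a'$ trivial or factoring through $\phi$. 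But the lemma is stated---and is needed, when $J=L$, in the definition of $\tau(L,\chi)$---for an \emph{arbitrary} character $\a' \colon H_1(M_L^\vph;\Z) \to \Z_{q^k}$, and the Casson--Gordon characters of interest are nontrivial on the torsion of $H_1(M_L^\vph)$, hence do not factor through $H_1(T^\vph) \cong \Z^3$; your remark that ``the restricted representation on $\pi_1(T^\vph)$ is still trivial'' is exactly the symptom of this loss. Taking the proposition the other way round ($S = T$, $Y = M_L$) does not help, since there is no map $T \to M_L$ inducing a $\Z_q$-homology isomorphism. So your argument establishes the lemma only for characters pulled back from $\HH$ (which does cover the case $J=H$, where $\a'$ is trivial), not in the generality claimed. (A secondary, fixable point: the $H_3$-claim in the paper gives an isomorphism on $H_1$ and $H_3$ only; to get an isomorphism on $H_2$ you should argue via the degree-one property rather than asserting that equal abstract homology suffices.)

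The paper avoids the problem by never comparing $M_J$ with $T$ directly. Since the linking number is $1$, the inclusion of a boundary torus $\partial X_J^1 \to X_J$ is a $\Z$-homology equivalence, so Proposition \ref{prop:lifting_homology_to_covers} is applied with $S = S^1 \times S^1$ and $Y = X_J$, the representation being the restriction of $\a'$ from $\pi_1(M_J^\vph)$ to $\pi_1(X_J^\vph)$; here $\a'$ lives on the target, so no pullback issue arises. The twisted homology of the covering $2$-torus vanishes because its fundamental group maps nontrivially into the torsion-free group $\HH$, hence some group element acts by a unit $\neq 1$ of $\Q(\xi_{q^k})(\HH)$ (your Koszul argument one dimension lower), and therefore $H_*(X_J^\vph;\Q(\xi_{q^k})(\HH)) = 0$; the same applies to $X_H^\vph$ and to $\partial X_J^\vph$. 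The Mayer--Vietoris sequence of $M_J^\vph = X_J^\vph \cup_{\partial X_J^\vph} X_H^\vph$ with these twisted coefficients then gives the vanishing for $M_J^\vph$ with the full $\a' \times \phi$-twisted coefficient system. If you wish to keep a global comparison with $T$, you would need a version of Theorem \ref{Thm:main_thm_FP10} in which the representation is defined on the source of the map, which is not what is proved in \cite{FP10}.
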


\begin{proof}
Since the linking number is 1, the inclusion maps $\partial X_J^1 \to X_J$ are $\Z$-homology equivalences.  We wish to apply Proposition \ref{prop:lifting_homology_to_covers} with $S = S^1 \times S^1$ to see that
\[H_*(X_J^\vph;\Q(\xi_{q^k})(\HH)) \cong H_*((\partial X_J^1)^\vph;\Q(\xi_{q^k})(\HH)) \cong 0.\]
In order to apply Proposition \ref{prop:lifting_homology_to_covers}, we need to note that $\Z[\xi_{q^k}]$ is a domain, so the cyclotomic field $Q:= \Q(\xi_{q^k})$ is a field of characteristic zero, and that $\a'$ factors through a $q$-group.

The Mayer-Vietoris sequence of $M_J^\vph = X_J^\vph \cup_{\partial X_J^\vph} X_H^\vph$ with twisted coefficients then yields the desired result.
\end{proof}

\begin{lemma}\label{lemma:independence_of_tau_on_choices}
The Witt group class $\tau(L,\chi) \in L^0(\Q(\xi_{q^k})(\HH)) \otimes_{\Z} \Z[1/q]$ depends neither on the choice of 4-manifold $W$ nor on the choice of isomorphism $\psi \colon \pi_1(T) \to \HH$.
\end{lemma}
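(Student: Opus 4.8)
The plan is to establish independence in two steps, following the standard Casson--Gordon template: first independence of the choice of $\psi$, then independence of the choice of $W$.

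\textbf{Independence of $\psi$.}  Suppose $\psi_0$ and $\psi_1$ are two isomorphisms $\pi_1(T) \to \HH$ both satisfying the normalisation \eqref{eqn:condition_on_psi}.  Since both induce the same map on $H_3$, the composite $\psi_1^{-1} \circ \psi_0 \in \Aut(\HH) \cong \GL(3,\Z)$ has determinant $+1$, i.e.\ lies in $\SL(3,\Z)$.  The group $\SL(3,\Z)$ is generated by elementary matrices, so it suffices to handle the case where $\psi_1^{-1}\psi_0$ is a single elementary transvection; such an automorphism of $\HH$ is realised by an orientation-preserving self-homeomorphism $g$ of $T$.  Then $(T, \tr \times \psi_1)$ and $(T, \tr \times \psi_0)$ differ by precomposition with $g$, so the mapping cylinder of $g$, suitably collared, provides a cobordism which can be glued to a choice of $W$ for $\psi_0$ to produce a choice of $W'$ for $\psi_1$.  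Gluing on this cobordism changes neither $H_2(W;\Q(\xi_{q^k})(\HH))$ nor the rational $H_2$ in a way that affects the non-singular part of the intersection form (the cobordism piece has vanishing twisted homology by Lemma~\ref{lemma:homology_M_L_vanishes}, applied with the trivial character, since $M_H^\vph \cong T$), so $\tau$ is unchanged.  Alternatively, and perhaps more cleanly, one observes that changing $\psi$ by an element of $\SL(3,\Z)$ merely applies a ring automorphism of $\Q(\xi_{q^k})(\HH)$ induced by the corresponding automorphism of the group $\HH$, and one checks this automorphism acts trivially on $L^0(\Q(\xi_{q^k})(\HH)) \otimes \Z[1/q]$ because it is realised geometrically.

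\textbf{Independence of $W$.}  Let $W_0$ (with $s = s_0$) and $W_1$ (with $s = s_1$) be two choices.  Taking $q^{s_1}$ copies of $W_0$ and $q^{s_0}$ copies of $W_1$, and reversing orientation on the latter, we may assume both bound the same number $N = q^{s_0+s_1}$ of copies of $M_L^\vph \sqcup -T$, with matching boundary data.  Form the closed $4$-manifold $Z := W_0 \cup_{\partial} (-W_1)$ by gluing along the common boundary using the homeomorphism $I_1 \circ I_0^{-1}$; the representations $\kappa \times \Phi$ agree on the boundary by construction, so they glue to a representation $\pi_1(Z) \to \Z_{q^k} \times \HH$, hence a map to $\Q(\xi_{q^k})(\HH)$.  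By Novikov additivity for the rational signature and its analogue for the twisted intersection form (using that $M_L^\vph \sqcup -T$ has vanishing $\Q(\xi_{q^k})(\HH)$-homology by Lemma~\ref{lemma:homology_M_L_vanishes}, so the twisted intersection forms add without correction terms), we get
\[
\lambda_{\Q(\xi_{q^k})(\HH)}(W_0) - \lambda_{\Q(\xi_{q^k})(\HH)}(W_1) = \lambda_{\Q(\xi_{q^k})(\HH)}(Z)
\]
and similarly for $\lambda_\Q$, in the respective Witt groups after multiplying by $1/N$.  So it remains to show that the closed-manifold contribution
\[
\big(\lambda_{\Q(\xi_{q^k})(\HH)}(Z) - \Q(\xi_{q^k})(\HH)\otimes_\Q \lambda_\Q(Z)\big)\otimes \tfrac{1}{N}
\]
vanishes in $L^0(\Q(\xi_{q^k})(\HH)) \otimes \Z[1/q]$.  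This is exactly the point at which one invokes an $L^2$-signature / Casson--Gordon-style vanishing theorem: the difference of the twisted and untwisted signatures of a closed $4$-manifold with a representation to $\Z_{q^k} \times \HH$ that factors the twisting through a finite cyclic group tensor a free abelian group is $q$-primary torsion, by the argument of Casson--Gordon (comparing to the $\Z_{q^k}$-cover and using that the signature defect is multiplicative under the $q^k$-fold cover), so it dies after tensoring with $\Z[1/q]$.

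\textbf{Main obstacle.}  The delicate point is the last one: showing the closed-manifold correction term is $q$-primary torsion in $L^0(\Q(\xi_{q^k})(\HH)) \otimes \Z[1/q]$.  One must be careful that the relevant signature-defect vanishing is being applied to a representation whose image is $\Z_{q^k} \times \HH$ with $\HH$ free abelian; the standard trick is that passing to the $\Z_{q^k}$-cover $\widetilde Z$ of $Z$ kills the twisting down to the $\HH$-part, and $q^k \cdot (\text{twisted signature}) = (\text{signature of }\widetilde Z\text{ with }\HH\text{-coefficients})$ which agrees with the untwisted one up to an $L^2$-signature of the free abelian cover, which in turn vanishes in the Witt group by a standard argument (e.g.\ via the fact that $\Q(\xi_{q^k})(\HH)$ is a field and signatures of $\HH$-coverings of closed $4$-manifolds behave as in the Atiyah $L^2$-index theorem).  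Keeping track of exactly which Witt group the correction lives in, and that multiplication by $1/N$ with $N$ a power of $q$ is the operation that makes it vanish, is the bookkeeping that needs the most care.
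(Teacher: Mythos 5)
Your overall skeleton is the same as the paper's: form a closed manifold from two choices of $W$, use Novikov additivity together with the vanishing of $H_*(M_L^\vph \sqcup -T;\Q(\xi_{q^k})(\HH))$ (Lemma \ref{lemma:homology_M_L_vanishes}) to reduce to a statement about the closed manifold, and handle the choice of $\psi$ by realising any element of $\SL(3,\Z)$ by a self-homeomorphism of $T$ and gluing on a product cobordism. Your bookkeeping with $q^{s_1}$ copies of $W_0$ and $q^{s_0}$ copies of $W_1$ is fine, and the $\psi$-independence argument is essentially identical to the paper's.

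The genuine gap is in the one step that actually carries the content of the lemma: showing that for the closed manifold $Z$ the twisted and the tensored-up rational Witt classes agree in $L^0(\Q(\xi_{q^k})(\HH)) \otimes_{\Z} \Z[1/q]$. You propose to get this from a Casson--Gordon/$L^2$-signature covering argument, but as written this fails on two counts. First, the identity ``$q^k \cdot (\text{twisted signature}) = \text{signature of the } \Z_{q^k}\text{-cover with } \HH\text{-coefficients}$'' is not correct: the homology of the cover decomposes as a sum over \emph{all} characters of $\Z_{q^k}$, so the cover's signature is the sum of the corresponding twisted signatures, not $q^k$ times the one you are interested in. Second, and more fundamentally, equality of (even all) real-valued signatures, which is what the Atiyah $L^2$-index theorem or the $G$-signature theorem can deliver, does not imply equality of classes in the Witt group $L^0(\Q(\xi_{q^k})(\HH)) \otimes_{\Z} \Z[1/q]$; the lemma is a statement about Witt classes, not about signature numbers. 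The paper instead disposes of this step by a bordism computation: by the Atiyah--Hirzebruch spectral sequence and the K\"unneth theorem, $H_4(\Z_{q^k} \times \HH;\Z)$ is a finite $q$-group (since $H_j(\HH)=0$ for $j>3$ and $H_j(\Z_{q^k})$ is $q$-torsion for $j>0$), so $\Omega_4(\Z_{q^k} \times \HH) \otimes_{\Z} \Z[1/q] \cong \Omega_4 \otimes_{\Z} \Z[1/q]$, and then the argument of Cha (\cite[Lemma~2.1]{cha_Hirzebuch_type_defects}) shows that modulo $q$-primary torsion the twisted intersection form of the closed manifold is Witt equivalent to the ordinary one. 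If you want to complete your proof you should replace your covering-space paragraph by this bordism argument (or an equivalent one proving the Witt-class statement directly), since that is precisely the point your sketch leaves unsupported.
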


\begin{proof}
To see that the object $\tau(L,\chi)$ as defined is independent of the choice of 4-manifold $W$, and so is an invariant, we need, by Novikov additivity for the rational intersection form and by Lemma \ref{lemma:homology_M_L_vanishes} for twisted coefficients, to see that for a closed manifold $$V := W \cup_{q^sM_L^\vph \sqcup q^s T} -W',$$ the $L$-group classes
\[\lambda_{\Q(\xi_{q^k})(\HH)}(V) \mbox{ and } \lambda_\Q(V) \in L^0(\Q(\xi_{q^k})(\HH)) \otimes_{\Z} \Z[1/q]\]
coincide.  This follows again from the Atiyah-Hirzebruch spectral sequence.
 More precisely, this follows from the argument of \cite[Lemma~2.1]{cha_Hirzebuch_type_defects},
 the only difference in our case is that  $H_4(\Z_{q^k} \times \HH)$ is not zero, but is a finite $q$-group, and Cha's argument thus implies that
\[\Omega_4(\Z_{q^k} \times \HH) \otimes_{\Z} \Z[1/q] \cong \Omega_4 \otimes_{\Z} \Z[1/q].\]
Therefore, modulo $q$-primary torsion, the twisted intersection form of $V$ is Witt equivalent to the ordinary intersection form of $V$.

The obstruction does not depend on the choice of isomorphism \\$\psi \colon \pi_1(T) \to \HH$: simply choose a different orientation preserving self-homeomorphism of $T$ corresponding to the difference in two choices $\psi$ and $\psi'$, and glue a product cobordism onto $W$ for each copy of $T$ in $\partial W$, using the new homeomorphism at the new boundary.  We are using here the fact that any element of $\SL(3,\Z)$ can be realised by a homeomorphism of $T$, since $T \cong \R^3/\Z^3$.
\end{proof}

\subsection{Main obstruction theorem}

In order to prove Theorem \ref{thm:main_theorem} below, we will need the following lemma to justify having taken the trivial character on $\pi_1(T)$ to define $\tau(L,\chi)$.

\begin{lemma}\label{Lemma:any_character_cobordant_to_trivial_one}
There is a cobordism of $q^l$ copies of the 3-torus $T$ with any character $$\chi \colon \pi_1(T) \toiso H_1(T;\Z) \to \Z_{q^l},$$ via a cobordism $W_{\chi}$ which is depicted in Figure \ref{Fig:link_conc_hopf_link2}, to $q^l$ copies of $T$ with the trivial character, with an isomorphism $\psi \colon \pi_1(T) \to \mathcal{H}$ extending over $W_\chi$.  Moreover $W_{\chi}$ has trivial intersection form both with $\Q(\xi_{q^l})(\HH)$-coefficients twisted using the extensions of $\psi$ and $\chi$, $\chi \times \psi \colon \pi_1(W_\chi) \to \Z_{q^l} \times \HH$, and with rational coefficients.
\end{lemma}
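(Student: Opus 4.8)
The plan is to build the cobordism $W_\chi$ very explicitly, following the standard Casson--Gordon strategy of realising the character as the boundary of an explicitly-understood $4$-manifold. First I would observe that $\chi \colon \pi_1(T) \to \Z_{q^l}$ is determined by an element of $H^1(T;\Z_{q^l}) \cong \Hom(\Z^3,\Z_{q^l})$, hence by a primitive cohomology class (after scaling we may assume it is dual to an embedded, non-separating $T^2 \subset T$, or to a disjoint union of such). The kernel of $\chi$ is a finite-index subgroup $\Z^3 \subseteq \Z^3$, and since $T \cong \R^3/\Z^3$ and $\mathrm{SL}(3,\Z)$ acts by homeomorphisms, we may after a change of coordinates assume $\chi$ is the projection onto the last coordinate, i.e. $\chi(a,b,c) = c \bmod q^l$. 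Then $T = T^2 \times S^1$ and $\chi$ is pulled back from the degree-one map $S^1 \to S^1$ followed by reduction mod $q^l$. The natural cobordism is then $W_\chi := T^2 \times (\text{cobordism of } q^l \text{ copies of } S^1 \text{ with the identity character to } q^l \text{ copies of } S^1 \text{ with the trivial character})$; the one-dimensional piece is the standard surface-with-boundary (a disjoint union of pairs of pants / a connected planar surface) realising that $q^l \cdot [\text{point}] = 0$ in $\Omega_0(\Z_{q^l})$, crossed with $T^2$. This is exactly the picture to be drawn in Figure~\ref{Fig:link_conc_hopf_link2}.

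Next I would check the homological claims. Because $W_\chi = T^2 \times \Sigma$ for a surface $\Sigma$ with boundary, its rational homology is computed by the K\"unneth formula, and the rational intersection form on $H_2(W_\chi;\Q)$ is metabolic: the relevant middle-dimensional classes pair trivially because $\Sigma$ has the rational homology of a wedge of circles (it deformation retracts onto a $1$-complex), so $H_2(W_\chi;\Q) \cong H_2(T^2;\Q) \otimes H_0(\Sigma;\Q)$ consists of classes carried by the incompressible boundary tori, which have self-intersection zero. (Alternatively: the form is induced from a form on a $3$-manifold cross an interval, hence is trivial in the Witt group.) For the twisted statement, I would invoke Lemma~\ref{lemma:homology_M_L_vanishes}-style vanishing: $\chi$ is nontrivial on each boundary copy of $T$ with the nontrivial character but we need the twisted homology of $W_\chi$ itself. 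Since $\Q(\xi_{q^l})(\HH)$ is a field and $\chi \times \psi$ sends the $S^1$-direction of $\Sigma$ nontrivially, the twisted chain complex of $\Sigma$ (and hence of $T^2 \times \Sigma$) is acyclic over $\Q(\xi_{q^l})(\HH)$ — this is the standard computation that $H_*(S^1; F[t^{\pm 1}]) = 0$ when $t$ acts by a nontrivial root of unity, extended over the field of fractions — so $H_*(W_\chi;\Q(\xi_{q^l})(\HH)) = 0$ and the twisted intersection form is (vacuously) trivial.

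The extension of $\psi \colon \pi_1(T) \to \HH$ over $W_\chi$ is automatic from the product structure: $\pi_1(W_\chi) = \pi_1(T^2) \times \pi_1(\Sigma)$ surjects onto $\pi_1(T^2) \times \Z \cong \Z^3 \cong \HH$ in a way restricting to $\psi$ on each boundary torus, once we pick the splitting of $\Sigma$ onto its $S^1$-factor compatibly. I should also make sure the character $\chi \times \psi \colon \pi_1(W_\chi) \to \Z_{q^l} \times \HH$ restricts correctly to \emph{each} boundary component — on the "incoming" copies it must be $\chi \times \psi$ and on the "outgoing" copies $\tr \times \psi$ — which is exactly the content of the choice of $\Sigma$ as a cobordism from $q^l$ circles-with-identity-character to $q^l$ circles-with-trivial-character, together with the fact that $q^l$ was chosen precisely so that such a $\Sigma$ exists ($q^l \cdot 1 = 0$ in $H_1(S^1;\Z_{q^l}) = \Z_{q^l}$).

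I expect the main obstacle to be bookkeeping rather than depth: getting the orientations and the labelling of boundary components consistent so that the gluing of $W_\chi$ onto $W$ in the proof of the main theorem actually cancels the unwanted character on the $T$-boundary, and making precise the surface $\Sigma$ (how many pairs of pants, which boundary circles carry which character) so that Figure~\ref{Fig:link_conc_hopf_link2} matches the algebra. A secondary subtlety is the twisted acyclicity: one must be careful that $\psi$ extended over $W_\chi$ still has infinite-order image on the relevant $S^1$ so that localising at the field of fractions $\Q(\xi_{q^l})(\HH)$ kills the homology; this is where I would lean on Proposition~\ref{prop:lifting_homology_to_covers} / the argument of Lemma~\ref{lemma:homology_M_L_vanishes} rather than redo it by hand.
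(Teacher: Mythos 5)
Your construction of $W_\chi$ is essentially the paper's: after a change of coordinates you arrange the character to be supported on one circle factor, take a planar cobordism $\Sigma$ of circles realising $q^l\cdot(\text{character})=0$ in $\Z_{q^l}$, and set $W_\chi=\Sigma\times S^1\times S^1$; the extension of $\psi$ via the product structure is also as in the paper. The gap is in your verification that the intersection forms vanish. For the twisted form you assert that the $\Q(\xi_{q^l})(\HH)$-twisted chain complex of $\Sigma$ is acyclic ``since $\chi\times\psi$ sends the $S^1$-direction of $\Sigma$ nontrivially''. This is false: $\Sigma$ is homotopy equivalent to a wedge of roughly $2q^l$ circles, so for any rank-one local system over a field its $H_1$ has dimension one less than the number of circles, which is nonzero; a nontrivial character only kills $H_0$. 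The correct source of vanishing is either the $T^2$ factor (the restriction of $\psi$ to $\pi_1(S^1\times S^1)$ lands in $\HH$ nontrivially, so $H_*(T^2;\Q(\xi_{q^l})(\HH))=0$ and the K\"unneth formula for the external product local system kills everything), or, as the paper does, one only needs $H_2$: by Eckmann--Shapiro, $H_2(W_\chi;\Q(\xi_{q^l})[\HH])\cong H_2(\Sigma;\Q(\xi_{q^l})[t^{\pm 1}])$, which vanishes simply because $\Sigma$ is homotopy equivalent to a $1$-complex.

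Your rational computation also misquotes K\"unneth: $H_2(W_\chi;\Q)$ is not just $H_2(T^2)\otimes H_0(\Sigma)$; it contains the summand $H_1(T^2)\otimes H_1(\Sigma)$, represented by tori $a\times\alpha$ with $\alpha$ a curve in $\Sigma$. These happen to pair trivially because $\Sigma$ is planar (its $H_1$ intersection pairing vanishes), but you neither include them nor argue this; the parenthetical ``form induced from a $3$-manifold cross an interval'' does not apply to $W_\chi$. The paper's argument is cleaner and worth adopting: $H_2(\partial W_\chi)\to H_2(W_\chi)$ is surjective (all the classes above are carried by the boundary $3$-tori), and classes in the image of the boundary pair trivially with everything, so the non-singular part of the rational intersection form is trivial. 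With these two repairs your proof goes through and matches the paper's.
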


\begin{proof}
First note that $S^1$ with a trivial character $(S^1,\tr)$ is cobordant on the one hand to $q^l$ copies of $S^1$, again with a trivial character, via a cobordism $V_{\tr}$.  On the other hand, $(S^1,\tr)$ is also cobordant to $q^l$ copies of $S^1$ with the same non-trivial character $\chi \colon \pi_1(S^1) \to \Z_{q^l}$ on each copy, via a cobordism $V_{\chi}$.  If we glue together $V_{\tr}$ and $V_{\chi}$, and cross the resulting cobordism with $S^1 \times S^1$, we obtain a cobordism from $q^l$ copies of $T$ to itself, with any character changed to the trivial character on the first $S^1$ of $S^1 \times S^1 \times S^1 = T$.  That is, we define
\[W_\chi := (V_{\chi} \cup_{S^1} V_{\tr}) \times S^1 \times S^1.\]
$W_\chi$ is just a union of several pairs of trousers (pants), crossed with a torus.

Part of the definition of a cobordism $W_\chi$ is a collection of homeomorphisms $T \toiso (\partial W_\chi)_i$ for each component $(\partial W_\chi)_i, i= 1 ,\dots, q^l$, of $\partial W_\chi$ with the non-trivial character associated to it.  We pick a homeomorphism, which we use for each $i = 1 ,\dots ,q^l$, $T \toiso (\partial W_\chi)_i$ such that the induced character $\chi \colon \pi_1((\partial W_\chi)_i)  \to \Z_{q^l}$ factors through \[\pi_1((\partial W_\chi)_i) \toiso \pi_1(S^1) \times \pi_1(S^1 \times S^1) \xrightarrow{\pr} \pi_1(S^1) \cong \Z \to \Z_{q^l}.\]   We therefore only need to change the character on one $S^1$ component of $T$ in order to arrive at the trivial character associated to $(\partial W_\chi)_i$ for $i=q^l +1, \dots,2q^l$.  This is precisely what $W_\chi$ achieves.

Since $W_\chi$ is a union of pairs of trousers, crossed with a torus, and since we have the same isomorphism $\psi \colon \pi_1(T) \toiso \HH$ for each copy of $T$ at the end of the cobordism with non-trivial character, then $\psi$ extends over $W_\chi$.  If we require a different isomorphism $\psi$ at the end of $W_\chi$ with trivial character, then, as in Lemma \ref{lemma:independence_of_tau_on_choices}, we can glue on a product cobordism to each of the copies $(T,\tr \times \psi) \subset \partial W_\chi,$ of $T$ with the trivial character, using a different self-homeomorphism of $T$.

The map $H_2(\partial W_\chi) \to H_2(W_\chi)$ is surjective.  The non-singular part of the intersection form on $H_2(W_\chi)$ is therefore trivial.
On the other hand, we have by the Eckmann--Shapiro lemma, that
\[ \ba{rcl} H_2(W_\chi;\Q(\xi_{q^l})[\HH])&=&H_2( (V_{\chi} \cup_{S^1} V_{\tr}) \times S^1 \times S^1;\Q(\xi_{q^l})[\HH])\\
&=&H_2( (V_{\chi} \cup_{S^1} V_{\tr});\Q(\xi_{q^l})[t^{\pm 1}]),\ea \]
but the last group is zero since $V_{\chi} \cup_{S^1} V_{\tr}$ is homotopy equivalent to a 1--complex.
It thus follows that the twisted intersection form on $W_\chi$ is also trivial.
\end{proof}

The following is our main obstruction theorem.  Note that we use the inclusion $\Q(\xi_{q^k}) \hookrightarrow \C$ in order to obviate dealing with the complications which arise from requiring higher prime powers to extend a character over a 4-manifold.

\begin{theorem}\label{thm:main_theorem}
Suppose that $L$ is concordant to the Hopf link.  Then for any admissible homomorphism $\varphi \colon H_1(M_L) \to A$ to a finite abelian $p$-group $A$, for a prime $p$, there exists a metaboliser $P = P^\bot$ for the linking form
\[TH_1(X_L^\varphi,Y_a^\varphi)\times TH_1(X_L^\varphi,Y_{a}^\varphi)\to \Q/\Z\]
 with the following property: for any character of prime power order $\chi \colon H_1(M_L^\varphi) \to \Z_{q^k}$ which satisfies that $\chi|_{H_1(X_L^{\vph})}$ factors through \[\chi|_{{H_1(X_L^{\vph})}} \colon H_1(X_L^\vph) \to H_1(X_L^\vph,Y_a^\vph) \xrightarrow{\delta} \Z_{q^k}\] and that $\delta$ vanishes on $P$, we have that
\[\tau(L,\chi) = 0 \in L^0(\C(\HH)) \otimes_{\Z} \Z[1/q],\]
using the inclusion $\Q(\xi_{q^k}) \hookrightarrow \C$ to define the map $L^0(\Q(\xi_{q^k})(\HH)) \to \\L^0(\C(\HH))$.
\end{theorem}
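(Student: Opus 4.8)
The plan is to manufacture, from the exterior of the concordance, an explicit null-bordism exhibiting the vanishing of $\tau(L,\chi)$, in the spirit of Casson--Gordon.

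\emph{Choice of metaboliser and extension of the character.} We take
\[P := \ker\big(TH_1(X_L^\vph,Y_a^\vph;\Z)\to TH_1(E_C^\vph,Y_a^\vph;\Z)\big),\]
which is a metaboliser of $\lambda_L$ by Proposition~\ref{prop:linkform}. Let $\chi$ be as in the statement, so that $\chi|_{H_1(X_L^\vph)}$ factors through $\delta\colon H_1(X_L^\vph,Y_a^\vph)\to\Z_{q^k}$ with $\delta$ killing $P$. Proposition~\ref{prop:extend} then yields $l\geqslant k$ and an extension $\chi'\colon H_1(E_C^\vph)\to\Z_{q^l}$ of $q^{l-k}\cdot(\chi|_{H_1(X_L^\vph)})$, and Proposition~\ref{prop:extend2} extends $\chi'$ over $H_1(W_C^\vph)\to\Z_{q^l}$, where $W_C=E_C\cup(X_H\times I)$ has $\partial W_C=M_L\sqcup -M_H=M_L\sqcup -T$. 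Via $H_1(M_L)\cong H_1(W_C)$ the Hurewicz map $\phi'$ also extends over $\pi_1(W_C)$; restricting to the cover produces $\Phi\colon\pi_1(W_C^\vph)\to\HH'$ with $\im\Phi=\ker(\HH'\to A)=\HH$, the last equality because $\phi'$ is onto for both $M_L$ and $W_C$. Restricting $\Phi$ and $\chi'$ to the boundary piece $M_H^\vph\cong T$ gives an isomorphism $\psi_0\colon\pi_1(T)\toiso\HH$ and some character $\chi''\colon\pi_1(T)\to\Z_{q^l}$; since $W_C^\vph$ is a bordism carrying $\Phi$, we get $\phi_*[M_L^\vph]=\psi_{0*}[T]\in H_3(\HH)$, so $\psi_0$ satisfies (\ref{eqn:condition_on_psi}) and, by Lemma~\ref{lemma:independence_of_tau_on_choices}, may be used for $\psi$.

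\emph{Correcting the torus end and assembling $W$.} Take $q^l$ copies of $W_C^\vph$ and glue Lemma~\ref{Lemma:any_character_cobordant_to_trivial_one}'s cobordism $W_{\chi''}$ onto the resulting $q^l$ copies of $(T,\chi''\times\psi_0)$, arranging as in that lemma that $\psi_0$ extends over $W_{\chi''}$; call the result $W$. Then $W$ has boundary $\bigsqcup_{q^l}(M_L^\vph,\chi\times\phi)\sqcup\bigsqcup_{q^l}(-T,\tr\times\psi)$, so $W$ is precisely a $4$--manifold of the kind used to compute $\tau(L,\chi)$, with $s=l$ (here $\psi$ denotes the isomorphism appearing at the trivial end of $W_{\chi''}$, which again satisfies (\ref{eqn:condition_on_psi}) by bordism invariance). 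We fix the inclusion $\Z_{q^l}\hookrightarrow S^1\subset\C$ by $1\mapsto\xi_{q^l}$ and the compatible $\Z_{q^k}\hookrightarrow S^1$ by $1\mapsto\xi_{q^l}^{q^{l-k}}$; this is legitimate exactly because the target of the theorem is $L^0(\C(\HH))\otimes_\Z\Z[1/q]$, which absorbs the passage from $q^k$ to $q^l$ (cf.\ the remark preceding the theorem), a standard Novikov-additivity comparison of the two models showing that $W$ computes the image of $\tau(L,\chi)$.

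\emph{Vanishing of the intersection forms.} For the twisted form, apply Proposition~\ref{prop:lifting_homology_to_covers} to the $\Z$--homology equivalence $M_L\to W_C$, with $\a'=\chi'$, the torsion-free group $\HH'$, and the admissible $\vph$, to get $H_*(W_C^\vph;\C(\HH))\cong H_*(M_L^\vph;\C(\HH))$ for the local system carried by $\chi'\times\Phi$; the right-hand side vanishes by Lemma~\ref{lemma:homology_M_L_vanishes} (after base change from $\Q(\xi_{q^k})$ to $\C$). Since $W_{\chi''}$ has vanishing twisted homology by Lemma~\ref{Lemma:any_character_cobordant_to_trivial_one}, and $H_*(T;\C(\HH))=0$ because the Koszul complex on the $t_i-1$ is exact over the field $\C(\HH)$, a Mayer--Vietoris argument gives $H_2(W;\C(\HH))=0$, so $\lambda_{\C(\HH)}(W)$ is the zero form. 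For the rational form, $H_2(M_L^\vph;\Q)\twoheadrightarrow H_2(W_C^\vph;\Q)$ by Lemma~\ref{lemma:Zhomequiv_implies_torsion}, so every class in $H_2(W_C^\vph;\Q)$ dies in $H_2(W_C^\vph,\partial W_C^\vph;\Q)$ and $\lambda_\Q(W_C^\vph)$ is metabolic; together with the vanishing of $\lambda_\Q(W_{\chi''})$ and Novikov additivity, $\lambda_\Q(W)$ is metabolic. Therefore
\[\tau(L,\chi)=\big(\lambda_{\C(\HH)}(W)-\C(\HH)\otimes_\Q\lambda_\Q(W)\big)\otimes\frac{1}{q^l}=0\in L^0(\C(\HH))\otimes_\Z\Z[1/q].\]

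\emph{Where the difficulty lies.} The conceptual heart --- that a concordance yields a null-bordism as soon as $\delta$ annihilates the metaboliser $P$ --- is already encapsulated in Propositions~\ref{prop:linkform}, \ref{prop:extend} and~\ref{prop:extend2}. The main obstacle in the present argument is instead the bookkeeping: propagating the single character $\chi$ through the covers and across the glued $4$--manifold while controlling the discrepancy between $\Z_{q^k}$ and $\Z_{q^l}$ (handled by working over $\C$), replacing the character $\chi''$ induced on the torus boundary by the trivial one via Lemma~\ref{Lemma:any_character_cobordant_to_trivial_one}, and checking that the twisted homology of the \emph{entire} glued manifold $W$, not merely of $W_C^\vph$, vanishes, so that $\lambda_{\C(\HH)}(W)$ is literally zero rather than just metabolic.
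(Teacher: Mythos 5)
Your proposal is correct and follows essentially the same route as the paper: the metaboliser $P$ from Proposition \ref{prop:linkform}, character extension over $E_C^\vph$ and then $W_C^\vph$ via Propositions \ref{prop:extend} and \ref{prop:extend2}, gluing $q^l$ copies of $W_C^\vph$ to the torus-end cobordism of Lemma \ref{Lemma:any_character_cobordant_to_trivial_one}, and killing both intersection forms via Proposition \ref{prop:lifting_homology_to_covers} together with Lemmas \ref{lemma:homology_M_L_vanishes} and \ref{lemma:Zhomequiv_implies_torsion}, with the $q^k$ versus $q^l$ discrepancy absorbed by passing to $\C(\HH)$. The extra details you supply (the bordism-invariance check of condition (\ref{eqn:condition_on_psi}) for $\psi_0$, the Koszul-complex vanishing of $H_*(T;\C(\HH))$, and the explicit Mayer--Vietoris/Novikov additivity bookkeeping for the glued manifold) are consistent with, and slightly more explicit than, the paper's own argument.
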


\begin{proof}
As above let $E_C$ be the exterior of the concordance.  The conclusion regarding the $\Q/\Z$-valued linking form was Proposition \ref{prop:linkform}: a metaboliser is given by
\[P := \ker(TH_1(X^\vph,Y_a^\vph;\Z) \to TH_1(E_C^{\vph},Y_a^\vph;\Z)).\]  Let $\chi$ satisfy the conditions set out.  As in Section \ref{section:extend}, let $W_C := E_C \cup X_H \times I$.  By Propositions \ref{prop:extend} and \ref{prop:extend2}, we see that our character extends from $\pi_1(M_L^\vph)$ to $\pi_1(W_C^{\varphi})$, if we allow it to take values in $\Z_{q^l}$ for a potentially higher prime power: $l \geqslant k$.  We include $\Q(\xi_{q^k}) \subset \C$ so that our characters always extend.

Using the inverse of the inclusion induced isomorphism $H_1(M_L;\Z) \toiso H_1(W_C;\Z)$, the map $\phi' \colon \pi_1(M_L) \to H_1(M_L) \to \HH'$ extends to \[\phi' \colon \pi_1(W_C) \to H_1(W_C) \toiso H_1(M_L) \xrightarrow{\phi'} \HH'.\]
As $\vph \colon \pi_1(M_L) \to H_1(M_L) \to A$ extends to $\vph \colon \pi_1(W_C) \to A$ using the same isomorphism $H_1(W_C) \toiso H_1(M_L)$, the restriction of $\phi'$ to \[\ker(\vph \colon \pi_1(W_C) \to H_1(W_C) \toiso H_1(M_L) \to A) \cong \pi_1(W_C^\vph)\] yields a homomorphism $\phi \colon \pi_1(W_C^\vph) \to \HH$, with $\HH \subset \HH'$ the image of $\phi \colon \pi_1(M_L^\vph) \to \HH'$ as above: since both $\phi \colon \pi_1(M_L^\vph) \to \HH$ and $\phi \colon \pi_1(W_C^\vph) \to \HH$ factor through $\ker(\vph \colon H_1(M_L) \to A)$, we indeed have that $$\im(\phi \colon \pi_1(W_C^\vph) \to \HH')\subseteq \im(\phi \colon \pi_1(M_L^\vph) \to \HH') = \HH.$$

Since both $\chi$ and $\phi$ extend to $W_C^\vph$, we can now  use $(W_C^\vph,\chi \times \phi)$ to calculate $\tau(L,\chi)$.

As $\chi$ extends to $\pi_1(W_C^\vph)$, it also restricts to a map $\chi \colon \pi_1(T) \to \Z_{q^l}$, since $T \cong M_H^\vph$.  We need to know that for any character on $\pi_1(T) \cong \Z^3$, there is a cobordism from $q^l$ copies of $(T,\chi)$ to $q^l$ copies of $(T,\tr)$, since we used the trivial character to define $\tau(L,\chi)$.  This cobordism $W_{\chi}$ was described in Lemma \ref{Lemma:any_character_cobordant_to_trivial_one}.  We take $q^l$ copies of $W_C$, and glue them to $W_\chi$: this gives us a cobordism from $q^l \cdot M_L^\vph$ to $q^l \cdot T$, which we can then use to calculate $\tau(L,\chi)$: see Figure \ref{Fig:link_conc_hopf_link2}.  We saw in Lemma \ref{Lemma:any_character_cobordant_to_trivial_one} that the intersection forms of $W_\chi$ are trivial.

\begin{figure}[h]
    \begin{center}
 {\psfrag{A}{$(M_L^\vph,\chi)$}
 \psfrag{B}{$(T,\chi)$}
 \psfrag{C}{$(T,\tr)$}
 \psfrag{D}{$W_\chi$}
 \psfrag{W}{$(W_C^\vph,\chi)$}
 \includegraphics[width=12cm]{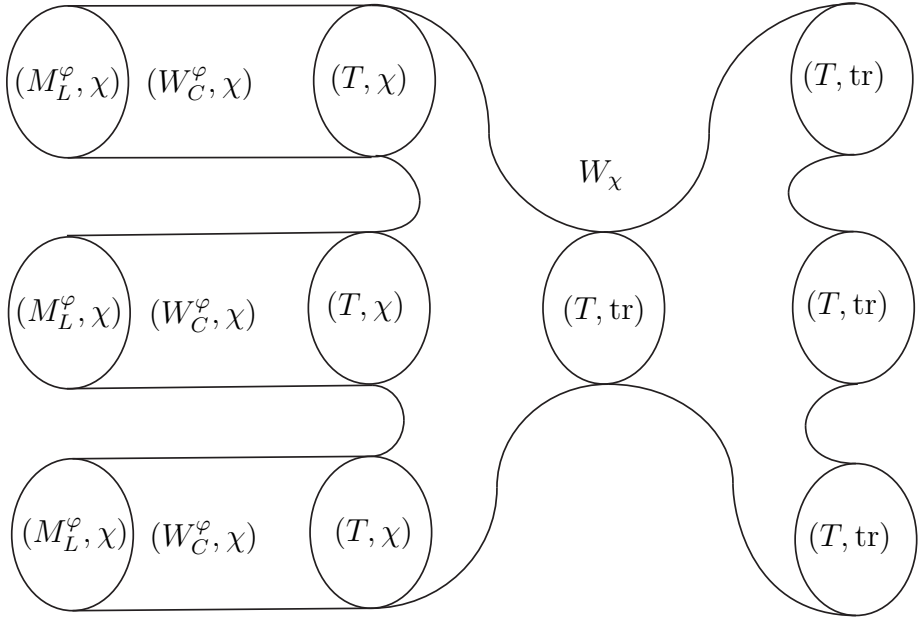}
 }
 \caption{The cobordism with which we calculate $\tau(L,\chi)$ ($q^l = 3$).}
 \label{Fig:link_conc_hopf_link2}
 \end{center}
\end{figure}

We therefore need to see that the twisted and ordinary Witt classes in $L^0(\C(\HH)) \otimes_{\Z} \Z[1/q]$ which arise from $W_C^\vph$ vanish.

We again apply Proposition \ref{prop:lifting_homology_to_covers}, with $S = M_L$ and $Y = W_C$, to see, since $M_L \to W_C$ is a $\Z$-homology equivalence, that
\[H_*(W_C^\vph;\Q(\xi_{q^l})(\HH)) \cong H_*(M_L^\vph;\Q(\xi_{q^l})(\HH)) \cong 0.\]
Here we also use the conclusion of Lemma \ref{lemma:homology_M_L_vanishes}, so we are really applying Proposition \ref{prop:lifting_homology_to_covers} twice.  Trivial homology groups necessarily carry Witt trivial forms; the rational intersection form is also trivial since \[H_2(W_C^\vph;\Q) \subseteq \im(H_2(M_L^\vph;\Q) \to H_2(W_C^\vph;\Q))\]
by Lemma \ref{lemma:Zhomequiv_implies_torsion} applied with $X=M_L$ and $E=W_C$.
Therefore, including $\Q(\xi_{q^l}) \subset \C$, we have \[\tau(L,\chi) = 0 \in L^0(\C(\HH)) \otimes_{\Z} \Z[1/q]\] as claimed, since $\C$ is flat over $\Q(\xi_{q^l})$.
\end{proof}

\section{Examples}\label{section:examples}

\subsection{Signature invariants}

Let $\HH$ be a free-abelian group. In this section we will  show how to recognise non--trivial elements in $L^0(\C(\HH))$.
 Let $P$ be a Hermitian matrix over $\C[\HH]$.
Let $\eta\colon \HH\to S^1$ be any character. Note that $\eta$ gives rise to a ring homomorphism $\C[\HH]\to \C$. We denote by $\eta(P)$ the Hermitian matrix over $\C$ obtained by applying the above ring homomorphism to all entries of $P$. We can then consider the signature $\operatorname{sign}(\eta(P))$ of the matrix $\eta(P)$.

Let $\tau \in L^0(\C(\mathcal{\HH}))$. We represent $\tau$ by a Hermitian matrix $P$ over $\C[\HH]$ and we define
\[\sigma(P):=\int_{\eta\in \operatorname{Hom}(\HH,S^1)} \operatorname{sign}(\eta(P)),\]
where we equip the torus $\operatorname{Hom}(\HH,S^1)$ with the Haar measure.

\begin{lemma}\label{lem:sigmatau}
The real number $\sigma(P)$ is independent of the choice of $P$ representing a given $\tau \in L^0(\C(\HH))$.
\end{lemma}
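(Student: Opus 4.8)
The plan is to show that $\sigma(P)$ is a well-defined function on the Witt group $L^0(\C(\HH))$ by checking that it vanishes on the two types of relations that generate Witt equivalence: (i) if $P$ and $P'$ are congruent Hermitian matrices over $\C[\HH]$, i.e. $P' = \overline{A}^t P A$ for some invertible $A$ over $\C(\HH)$, then $\sigma(P) = \sigma(P')$; and (ii) if $P$ is metabolic (split), then $\sigma(P) = 0$. Since every element of $L^0(\C(\HH))$ is represented by a Hermitian matrix over $\C[\HH]$ (clearing denominators), and any two representatives of the same class differ by a sequence of congruences and addition/subtraction of metabolic forms, establishing (i) and (ii) suffices.

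\textbf{Step 1: reduce to pointwise statements.} The key observation is that for all but a measure-zero subset of characters $\eta \in \Hom(\HH,S^1)$, the ring homomorphism $\C[\HH] \to \C$ induced by $\eta$ extends to the relevant localization: the entries of $P$, $P'$, $A$, and the denominators appearing in $A$ all lie in $\C[\HH]$, and a nonzero element $f \in \C[\HH]$ has $\eta(f) = 0$ only for $\eta$ in the zero set of $f$, which is a proper subtorus (or empty), hence Haar-null. So for almost every $\eta$, the matrices $\eta(P)$, $\eta(P')$, $\eta(A)$ are defined and $\eta(A)$ is invertible over $\C$.

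\textbf{Step 2: congruence invariance pointwise.} For such an $\eta$, applying $\eta$ to $P' = \overline{A}^t P A$ gives $\eta(P') = \overline{\eta(A)}^t \, \eta(P) \, \eta(A)$ (using that $\eta$ respects the involution, since $\eta(\HH) \subset S^1$ so $\overline{\eta(g)} = \eta(g)^{-1} = \eta(g^{-1})$). By Sylvester's law of inertia over $\C$, $\operatorname{sign}(\eta(P')) = \operatorname{sign}(\eta(P))$. Integrating over the measure-one set where this holds gives $\sigma(P') = \sigma(P)$. For metabolic forms, if $P$ is congruent over $\C(\HH)$ to a matrix of the block form $\begin{pmatrix} 0 & B \\ \overline{B}^t & C \end{pmatrix}$ with $B$ invertible, then after clearing denominators and applying a generic $\eta$, $\eta(P)$ is congruent over $\C$ to such a block matrix with $\eta(B)$ invertible, which has signature zero; again integrate over the full-measure set. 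One must also note that $\operatorname{sign}(\eta(P))$ is a bounded measurable function of $\eta$ (it is locally constant away from a measure-zero set, being determined by signs of principal minors or by eigenvalue counts), so the integral makes sense; this is standard.

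\textbf{Main obstacle.} The only genuinely delicate point is the bookkeeping about which characters $\eta$ must be excluded, and confirming that the exceptional set is always Haar-null. This amounts to checking that the denominators arising in a congruence realizing a given Witt relation, together with the determinant of the congruence matrix and the determinant of the invertible block in a metabolizer, are all \emph{nonzero} elements of $\C[\HH]$ (or $\C(\HH)$) — which they are, essentially by definition of invertibility in $\C(\HH)$ — and that the zero locus in $\Hom(\HH,S^1) \cong (S^1)^{\operatorname{rk}\HH}$ of a nonzero Laurent polynomial has Haar measure zero. The latter is a routine Fubini/induction argument on $\operatorname{rk}\HH$: a nonzero Laurent polynomial in one variable has finitely many roots on $S^1$, and one induces on the number of variables. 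With that in hand the rest is formal.
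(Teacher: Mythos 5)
Your proof is correct and takes essentially the same route as the paper: both reduce to the pointwise statement that $\operatorname{sign}(\eta(\cdot))$ is invariant under congruence (Sylvester's law of inertia) and vanishes on hyperbolic/metabolic blocks, for all characters $\eta$ outside the Haar-null zero sets of the relevant nonzero Laurent polynomials (determinants and denominators). The only difference is bookkeeping: the paper stabilizes both representatives by hyperbolic blocks $B$ so that the congruence matrices can be taken over $\C[\HH]$ with nonzero determinant, while you allow congruences over $\C(\HH)$ and handle metabolic forms directly; either variant works.
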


Before proving Lemma \ref{lem:sigmatau}, note that it allows us to define an invariant $\sigma(\tau) := \sigma(P)$ for some choice of representative $P$ of $\tau$.  Finally given $\tau \otimes q \in L^0(\C(\mathcal{\HH}))\otimes \Q$ we define
\[ \sigma(\tau\otimes q)=q\cdot \sigma(\tau).\]

\begin{proof}[Proof of Lemma \ref{lem:sigmatau}]
Since $\sigma$ is additive it suffices to show that $\sigma:L^0(\C(\mathcal{\HH}))\to \Z$ is well--defined.
Let $\tau \in L^0(\C(\mathcal{\HH}))$.
Let $P$ and $P'$ be two Hermitian matrices over $\C[\HH]$ which represent $\tau$. We let
\[ B = \begin{pmatrix} \, 0 \, & \, 1\, \\ \,1 \, & \, 0 \,\end{pmatrix},\]
and given $n\in \N$ we denote by $nB$ the diagonal sum of $n$ copies of $B$.
It is well-known that there exist $n$ and $n'$ such that $P\oplus nB$ and $P'\oplus n'B$ are congruent over $\C[\HH]$, i.e. there exist matrices $Q,Q'$ over $\C[H]$ with non-zero determinant, such that
\[ Q(P\oplus nB)Q^t=Q'(P'\oplus n'B)(Q')^t.\]
A priori the conjugation matrices $Q$ are over $\C(\HH)$, but only one is required.  We need two matrices $Q$ and $Q'$ here in order to ensure that both are over $\C[\HH]$ rather than $\C(\HH)$.
A straightforward calculation shows that
\[  \ba{rcl} \operatorname{sign}(\eta(P))&=&\operatorname{sign}(\eta(Q(P\oplus nB)Q^t))\\
&=&\operatorname{sign}(\eta(Q'(P'\oplus n'B)(Q')^t))\\
&=&\operatorname{sign}(\eta(P')),\ea \]
for all $\eta$ such that  $\det(\eta(Q))\ne 0$ and such that $\det(\eta(Q'))\ne 0$.
Put differently, we have $ \operatorname{sign}(\eta(P))= \operatorname{sign}(\eta(P'))$ for all $\eta \in \operatorname{Hom}(\HH,S^1)$ outside a set of measure zero.
\end{proof}

\subsection{The satellite construction}

Let $L$ be a link, let $K$ be a knot and let $\g \subset X_L$ be a curve which is unknotted in $S^3$.  Define $X_K:= S^3 \setminus \nu K$.  We form the space
\[S^3 \setminus \nu \g \cup_{\theta} X_K \cong S^3,\]
where $\theta \colon \partial(S^3 \setminus \nu\g) \to \partial X_K$ is a homeomorphism which sends the meridian of $\gamma$ to the zero-framed longitude of $K$ and the zero-framed longitude of $\g$ to a meridian of $K$.  The image of $L$ is the \emph{satellite link} $S(L,K,\g)$.
When $L,K$ and $\g$ are understood, then we just write  $S= S(L,K,\g)$.

Note that the canonical degree one map $S^3\sm \nu K\to S^1\times D^2$ gives rise to a canonical degree one map $S^3\sm \nu S\to S^3\sm \nu L$.
Let $\vph\colon H_1(X_L;\Z)\to A$ be an epimorphism onto a finite group. We denote the induced homomorphism $H_1(X_S;\Z)\xrightarrow{\cong} H_1(X_L;\Z)\to A$ by $\vph$ as well.
  We now have the following lemma, which says that in many cases the satellite construction has no effect on the homology of finite covers or on the linking form.

\begin{lemma}\label{lem:sameh1finitecovers}
Let $L,K$ and $\g$ be as above such that in addition $\g$ is null homologous. Let $\vph\colon H_1(X_L;\Z)\to A$ be an epimorphism onto a finite group.   Then the canonical degree one map $X_S\to X_L$ induces an isomorphism
\[TH_1(X_S^\vph,Y_a^\vph) \toiso TH_1(X_L^\vph,Y_a^\vph)\]
and the linking form of Section \ref{section:defn_of_linking_forms} is preserved.
\end{lemma}

\begin{proof}
We write $k=|A|$.
Since $\g$ is null homologous, it lifts to curves $\wt{\g}_1,\dots\wt{\g}_{k} \subset X_L^\vph$.  Then
\[X_S^\vph \cong \cl(X_L^\vph \setminus \cup_{i=1}^{k} \nu \wt{\g}_i) \cup_{\cup_{i=1}^{k} \partial X_K} \bigcup\limits_{i=1}^{k} X_K.\]
We now refer to the argument of \cite[Lemma~10.3]{Friedl}, the ideas of which originate from \cite{Litherland84}.  This argument carries over to our situation with translation of notation, and noting that $H_1(X_L^\vph) \toiso H_S(X_L^\vph)$ implies that \[H_1(X_L^\vph,Y_a^\vph) \toiso H_1(X_S^\vph,Y_a^\vph)\]
by the five lemma, and the fact that the boundary of $X_S$ is the image of the boundary of $X_L$ in the satellite construction.
\end{proof}

The key technical advantage of the satellite construction is that it allows us to calculate
$\sigma(\tau(S,\chi))$ in terms of $\tau(L,\chi)$ and the Levine--Tristram signatures of $K$.
Recall that given any knot $K$ and $\omega\in S^1$ the Levine--Tristram signature $\sigma(K,\omega)\in \Z$ is defined
and that it can be calculated using any Seifert matrix of $K$.
It is well--known that $\sigma(K,\omega)=0$ for any slice knot $K$ and any prime power root of unity.
We refer to \cite{Levine} and \cite{Tristram69} for details.

The following theorem can easily be proved using the ideas and methods of Litherland \cite{Litherland84};  see also \cite[Theorem~10.5]{Friedl}.
Similar results have been obtained by many authors over the years, and we will therefore not provide a proof.

\begin{theorem}\label{thm:litherland}
Let $L$ be a 2-component link with linking number one and let $\g$ be a null-homologous curve in $S^3\sm \nu L$ which is unknotted in $S^3$.
Let $K$ be a knot. We write $S:=S(L,K,\g)$.  Let $\vph:H_1(X_L;\Z)\to A$ be an epimorphism onto a finite group of order $k$ and let
\[\chi \colon TH_1(X_S^\vph,Y_a^\vph) \toiso TH_1(X_{L}^\vph,Y_a^\vph) \to S^1\]
be a character and let $\phi \colon H_1(M_S^\vph) \toiso H_1(M_L^\vph) \to \mathcal{H}$ be an epimorphism onto a free abelian group of rank 3.  Suppose that $\g$ lifts to $\wt{\g}_1, \dots \wt{\g}_{k}$.  Then
\[\sigma(\tau(S,\chi)) = \sigma(\tau(L,\chi)) + \sum_{i=1}^{k}\, \sigma(K,\omega_i),\]
where $\omega_i = \chi([\wt{\g}_i])$ and $[\wt{\g}_i]$ is the image of $\wt{\g}_i$ in $TH_1(X_L^\vph,Y_a^\vph)$.
\end{theorem}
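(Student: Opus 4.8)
The plan is to realise $\tau(S,\chi)$ by a $4$--manifold built from one computing $\tau(L,\chi)$, by excising neighbourhoods of the lifted curves $\wt\g_i$ and gluing in copies of a standard $4$--manifold bounding $0$--framed surgery on $K$, and then to read off $\sigma(\tau(S,\chi))-\sigma(\tau(L,\chi))$ via Novikov additivity. This is precisely Litherland's method \cite{Litherland84}, and the argument of \cite[Theorem~10.5]{Friedl} transfers to the present setting with only notational changes; I indicate its shape. First I would record the local input. Write $N_K$ for $0$--framed surgery on $K$, so that $H_1(N_K;\Z)=\Z$ is generated by the meridian $\mu_K$ and $N_K=X_K\cup_{T^2}(S^1\times D^2)$, where the satellite gluing $\theta$ identifies $(S^1\times D^2)\cup_\theta(-X_K)$ with $\pm N_K$. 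For a prime--power root of unity $\omega$, the pair $(N_K,\omega)$ carrying the character $\mu_K\mapsto\omega$ is torsion in $\Omega_3(\Z_{q^m})$, so a $q$--power multiple $q^m(N_K,\omega)$ bounds a $4$--manifold $\mathcal A_\omega$ over which $\omega$ extends, and the classical Seifert--matrix computation --- the $\omega$--twisted intersection form being congruent to $(1-\omega)V+(1-\ol\omega)V^{T}$ for a Seifert matrix $V$ of $K$, see \cite{CassonGordon,Litherland84} --- yields the signature defect
\[\operatorname{sign}_{\omega}(\mathcal A_\omega)-\operatorname{sign}(\mathcal A_\omega)=q^{m}\,\sigma(K,\omega),\]
where $\operatorname{sign}_\omega$ denotes the signature of the $\omega$--twisted intersection form.

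Next I would build the cobordism. Since $\g$ is null--homologous it lifts to $\wt\g_1,\dots,\wt\g_k\subset X_L^\vph\subset M_L^\vph$, and, as in the proof of Lemma~\ref{lem:sameh1finitecovers}, $M_S^\vph\cong\cl\big(M_L^\vph\setminus\bigcup_{i=1}^k\nu\wt\g_i\big)\cup\bigcup_{i=1}^k X_K$, the satellite operation being supported in $X_L^\vph$ and leaving $X_H^\vph$ untouched; moreover $\phi$ vanishes on each $\wt\g_i$ and on its meridian, so $\HH$ acts trivially near each $\nu\wt\g_i$, while $\chi(\mu_{\wt\g_i})=0$ and $\chi(\wt\g_i)=\omega_i$. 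Let $W$ realise $\tau(L,\chi)$, with $\partial W=\bigsqcup_{q^s}(M_L^\vph\sqcup -T)$, and fix $m$ so that, for every $i$, $q^m(N_K,\omega_i)$ bounds $\mathcal A_{\omega_i}$ with $\omega_i$ extending. Passing to $q^m$ copies of $W$, the $q^{s+m}k$ lifts of $\g$ split into $q^sk$ families of $q^m$ curves of the same type; for each such family, of some type $i$, I glue a copy of $\mathcal A_{\omega_i}$ onto the corresponding $q^m$ neighbourhoods $\nu\wt\g$, identifying $\mathcal A_{\omega_i}$'s $(S^1\times D^2)$--pieces with these neighbourhoods via $\theta$, which replaces them by $q^m$ copies of $X_K$. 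This produces a cobordism $\mathcal C$ from $\bigsqcup_{q^{s+m}}M_L^\vph$ to $\bigsqcup_{q^{s+m}}M_S^\vph$, leaving the $-T$ summands alone, over which $\phi$ and $\chi$ extend, the latter restricting to $\omega_i=\chi([\wt\g_i])$ on each $\mathcal A_{\omega_i}$. Gluing $\mathcal C$ onto the $q^m$ copies of $W$ gives a $4$--manifold $V$ with $\partial V=\bigsqcup_{q^{s+m}}(M_S^\vph\sqcup -T)$ carrying the extended representation, so $V$ computes $\tau(S,\chi)$ with normalisation $1/q^{s+m}$.

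Finally I would extract the formula. By Lemma~\ref{lemma:homology_M_L_vanishes} the twisted homology of $M_L^\vph$ vanishes, and by Lemma~\ref{lemma:Zhomequiv_implies_torsion} its rational homology injects, so Novikov additivity for the rational intersection form together with the analogous additivity of the twisted Witt class (the separating copies of $M_L^\vph$ carrying no twisted homology) gives
\[\tau(S,\chi)-\tau(L,\chi)=\big(\lambda_{\Q(\xi_{q^k})(\HH)}(\mathcal C)-\Q(\xi_{q^k})(\HH)\otimes_{\Q}\lambda_{\Q}(\mathcal C)\big)\otimes\tfrac{1}{q^{s+m}}.\]
Now $\mathcal C$ is a union of a product part, whose non--singular intersection form vanishes both twisted and untwisted, with one copy of $\mathcal A_{\omega_i}$ for each of the $q^sk$ families; since $\HH$ acts trivially on each $\mathcal A_{\omega_i}$, applying $\sigma$ replaces the contribution of a type--$i$ family by its signature defect $\operatorname{sign}_{\omega_i}(\mathcal A_{\omega_i})-\operatorname{sign}(\mathcal A_{\omega_i})=q^m\,\sigma(K,\omega_i)$ from the first paragraph. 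As there are $q^s$ families of each type $i$, the total is $q^{s+m}\sum_{i=1}^k\sigma(K,\omega_i)$, and dividing by $q^{s+m}$ gives $\sigma(\tau(S,\chi))-\sigma(\tau(L,\chi))=\sum_{i=1}^k\sigma(K,\omega_i)$, as claimed.

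The hard part will be the middle step: confirming that $\mathcal C$ genuinely has the asserted boundary and that $\phi$ and $\chi$ extend over it with the precise restriction $\omega_i=\chi([\wt\g_i])$ to each $\mathcal A_{\omega_i}$ --- that is, tracking meridians and longitudes through the satellite identification $\theta$ and through the covering $X_S^\vph\to X_S$, and keeping the $q$--power multiplicities coherent. Together with the classical Seifert--matrix identification of the signature defect with the Levine--Tristram signature, this is exactly the bookkeeping carried out in \cite{Litherland84} and in \cite[Theorem~10.5]{Friedl}, which is why no further details are given here.
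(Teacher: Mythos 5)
Your outline is exactly the Litherland-style infection/satellite-cobordism argument that the paper itself points to for this theorem (the paper gives no proof, deferring to \cite{Litherland84} and \cite[Theorem~10.5]{Friedl}): replace the lifted curves' solid tori by copies of $X_K$ via a cobordism capped off with a $4$-manifold bounding $q^m$ copies of $(N_K,\omega_i)$, note that $\chi\times\phi$ extends with $\HH$ acting trivially on the new pieces, and extract the Levine--Tristram terms by (twisted) Novikov additivity. This is correct and essentially the same approach as the one the paper relies on.
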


%



\subsection{Example}

We consider the link $\mathcal{L}$
depicted in Figure \ref{Fig:example1}. Note that $\mathcal{L}$  is concordant to the Hopf link via a ribbon move.  It has non-trivial multi-variable Alexander polynomial
\[ \Delta_{\mathcal{L}}(s,t) = (ts+1-s)(ts+1-t),\]
which we computed using a Wirtinger presentation of the link group and the Fox calculus.

\begin{figure}[h]
 \begin{center}
 \includegraphics [width=6cm] {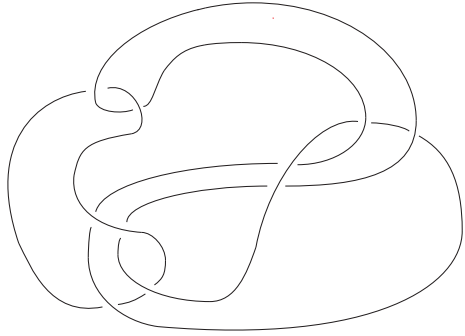}
 \caption {A link $\mathcal{L}$ which is concordant to the Hopf link, with non-trivial Alexander polynomial.}
 \label{Fig:example1}
 \end{center}
\end{figure}

We take $\vph$ to be the admissible homomorphism given by two projections $\varphi \colon H_1(X_\mathcal{L};\Z) \cong \Z \oplus \Z \to \Z_2 \oplus \Z_2$.
Using the theorem in the appendix we can calculate that $TH_1(X_{\mathcal{L}}^\vph,Y_a^\vph)$ of order 9.
In fact we have   a Maple program which calculates the homology explicitly. It starts with the Wirtinger presentation of $\pi_1(X_\mathcal{L})$, and uses it to construct the boundary maps in the chain complex
$C_*(X_\mathcal{L}^\vph;\Z)$, by using the regular representation at the end of the composition:
\[\pi_1(X_\mathcal{L}) \to H_1(X_\mathcal{L}) \cong \Z \oplus \Z \to \Z_2 \oplus \Z_2  \to  \aut(\Z[\Z_2\oplus \Z_2])\to \GL(4,\Z),\]
which sends
\[(1,0)  \mapsto  \left(\ba{cccc} \, 0 \, &  \,1\, &\, 0\, & \,0\, \\\, 1\, &\, 0\, & \, 0\, &\, 0\, \\\, 0\, &\, 0\, &\, 0\, &\, 1\, \\\, 0\, &\, 0\, &\, 1\, &\, 0\, \ea \right)
\mbox{ and }(0,1)  \mapsto  \left(\ba{cccc} \, 0\, &\, 0\, &\, 1\, &\, 0\, \\\, 0\, &\, 0\, &\, 0\, &\, 1\, \\\, 1\, &\, 0\, &\, 0\, &\, 0\, \\\, 0\, &\, 1\, &\, 0\, &\, 0\, \ea \right).\]
By doing row and column operations simultaneously on matrices representing the boundary maps $\partial_2 \colon C_2 \to C_1$ and $\partial_1 \colon C_1 \to C_0$, we directly compute that
\[H_1(X_\mathcal{L}^\vph;\Z) \cong \Z_9 \oplus \Z \oplus \Z,\]
which by Corollary \ref{cor:h1yasplits} implies that:
\[TH_1(X_\mathcal{L}^\vph,Y_a^\vph;\Z) \cong \Z_9.\]
Using  our Maple program  we augment the matrix representing $\partial_1$ with an identity matrix, whose task is to keep track of the basis changes.  With this extra information we can relate a generator of the $\Z_9$ summand of $TH_1(X_\mathcal{L}^\vph,Y_a^\vph)$ to our original Wirtinger generators, which we understand geometrically and can exhibit in the diagram of $\mathcal{L}$.
Using this approach we obtain  the curve $\g$ in Figure \ref{Fig:link_conc_hopf_link3}
which has the following three properties:
\bn
\item $\g$ represents a generator of $TH_1(X_\mathcal{L}^\vph,Y_a^\vph)$;
\item there exists a Seifert surface $F$ for $\mathcal{L}$, such that $\g$ and $F$ do not intersect (given by Seifert's algorithm and tubing along $\g$); and
\item the curve $\g$ forms the unlink with either of the two components of $L$.
\en
\begin{figure}[h]
    \begin{center}
 {\psfrag{A}{$\gamma$}
 \includegraphics[width=9cm]{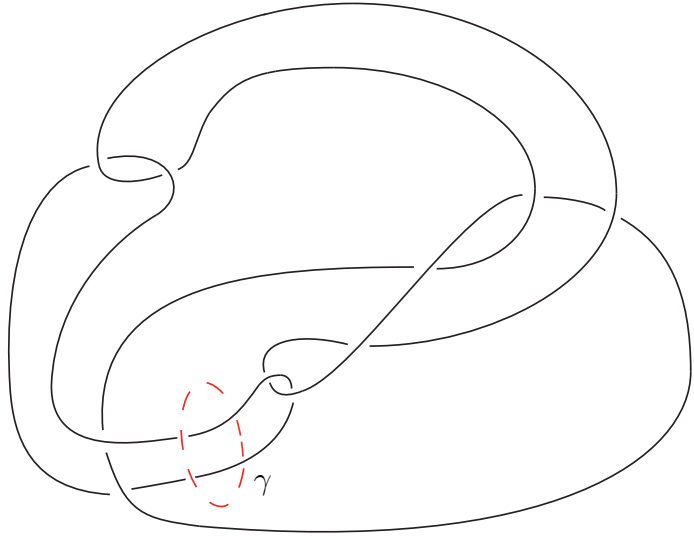}
 }
 \caption{The link $\mathcal{L}$ with the curve $\g$ shown.}
 \label{Fig:link_conc_hopf_link3}
 \end{center}
\end{figure}

\begin{claim}
Let  $K=3_1$ be the trefoil knot.  Then $S=S(\mathcal{L},K,\gamma)$ is not concordant to the Hopf link.
\end{claim}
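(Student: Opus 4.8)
The plan is to assume $S$ is concordant to the Hopf link and derive a contradiction, by playing the obstruction $\tau(S,\chi)$ of Theorem~\ref{thm:main_theorem} against the obstruction for $\mathcal{L}$ — which \emph{is} concordant to the Hopf link — via the satellite formula of Theorem~\ref{thm:litherland}. First I would fix the admissible homomorphism $\vph\colon\Z\oplus\Z\to\Z_2\oplus\Z_2$. Since $\g$ forms the unlink with each component of $\mathcal{L}$ it has linking number zero with both, hence is null-homologous in $X_{\mathcal{L}}$, so Lemma~\ref{lem:sameh1finitecovers} provides a canonical isomorphism $TH_1(X_S^\vph,Y_a^\vph)\cong TH_1(X_{\mathcal{L}}^\vph,Y_a^\vph)\cong\Z_9$ identifying the two linking forms. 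The key elementary observation is that a non-singular linking form on $\Z_9$ has metabolisers of order exactly $3$, and $\Z_9$ contains a \emph{unique} subgroup of that order; hence whenever Theorem~\ref{thm:main_theorem} is invoked — for $\mathcal{L}$, or (under the contradiction hypothesis) for $S$ — the metaboliser $P=P^\bot$ it produces is forced to be $P=3\Z_9$.

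Next I would choose the test character. Take $q=3$ and let $\delta\colon\Z_9\twoheadrightarrow\Z_3\hookrightarrow S^1$ be the reduction map, so $\ker\delta=3\Z_9=P$. Using the splitting $H_1(X_S^\vph,Y_a^\vph)\cong TH_1(X_S^\vph,Y_a^\vph)\oplus F$ of Lemma~\ref{lem:isomsplit}, I extend $\delta$ by zero on $F$, pull it back along $H_1(X_S^\vph)\to H_1(X_S^\vph,Y_a^\vph)$, and then extend over $H_1(M_S^\vph)$ — the last step being possible because $H_1(M_S^\vph)/H_1(X_S^\vph)\cong\Z$ is free by Lemma~\ref{Lemma:H_1X_Lvarphi_subgroup}. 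This yields a character $\chi\colon H_1(M_S^\vph)\to\Z_3\hookrightarrow S^1$ (transported to $H_1(M_{\mathcal{L}}^\vph)$ via the canonical degree-one map) which, by construction, satisfies the hypotheses of Theorem~\ref{thm:main_theorem} for the metaboliser $P=3\Z_9$, simultaneously for $\mathcal{L}$ and for $S$. Consequently $\tau(\mathcal{L},\chi)=0$, and, if $S$ were concordant to the Hopf link, also $\tau(S,\chi)=0$; applying the averaged-signature homomorphism $\sigma$ of Lemma~\ref{lem:sigmatau} I would then get $\sigma(\tau(S,\chi))=0$.

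I would then contradict this by computing $\sigma(\tau(S,\chi))$ directly. Theorem~\ref{thm:litherland} (with $|A|=4$, so that $\g$ lifts to four curves $\wt\g_1,\dots,\wt\g_4$ in $X_{\mathcal{L}}^\vph$) gives
\[\sigma(\tau(S,\chi))=\sigma(\tau(\mathcal{L},\chi))+\sum_{i=1}^{4}\sigma(3_1,\omega_i)=\sum_{i=1}^{4}\sigma(3_1,\omega_i),\qquad \omega_i=\chi([\wt\g_i]).\]
By property~(1) of $\g$ (exactly what the Maple computation verifies) some $[\wt\g_i]$ generates $\Z_9$; since the deck group $\Z_2\oplus\Z_2$ permutes the $\wt\g_i$ simply transitively and acts on $TH_1(X_{\mathcal{L}}^\vph,Y_a^\vph)$ by automorphisms, in fact every $[\wt\g_i]$ generates $\Z_9$, so each $\omega_i=\delta([\wt\g_i])$ is a primitive cube root of unity. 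As the Levine--Tristram signature of the trefoil satisfies $\sigma(3_1,\xi_3)=\sigma(3_1,\xi_3^2)=-2$ — both primitive cube roots of unity lie on the arc of $S^1$ between the zeros $e^{\pm i\pi/3}$ of $\Delta_{3_1}(t)=t^2-t+1$ that contains $-1$ — this yields $\sigma(\tau(S,\chi))=-8\neq 0$ (and even a single $\omega_i\neq 1$ already forces it nonzero), contradicting the previous paragraph. Hence $S$ is not concordant to the Hopf link.

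The step I expect to be the real obstacle — the one that is not formal — is producing and certifying the curve $\g$ of property~(1): one must locate, inside the diagram of $\mathcal{L}$, an unknotted null-homologous curve whose lift to $X_{\mathcal{L}}^\vph$ generates the $\Z_9$ torsion summand of $H_1(X_{\mathcal{L}}^\vph)\cong\Z_9\oplus\Z\oplus\Z$. This is what the computer calculation with the augmented boundary matrix $\partial_1$ accomplishes (the identity block recording the basis changes that translate an abstract generator of $\Z_9$ back into Wirtinger generators). A minor ancillary point is checking that the satellite $S$ is still a $2$-component link of linking number $1$, so that $M_S$ and Theorem~\ref{thm:main_theorem} are available; this follows since $\g$ is null-homologous and unknotted in $S^3$.
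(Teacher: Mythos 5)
Your argument is correct and follows essentially the same route as the paper's proof: exploit the unique metaboliser of the $\Z_9$ linking form, choose a non-trivial character to $\Z_3$ vanishing on it, invoke Theorem \ref{thm:main_theorem} together with Lemma \ref{lem:sigmatau}, and compute via the satellite formula of Theorem \ref{thm:litherland} using the trefoil's non-zero Levine--Tristram signature at primitive cube roots of unity. The only deviations are cosmetic: the paper needs only that at least one $\omega_i$ is a non-trivial cube root of unity (your deck-transformation argument that all four are is a fine but unnecessary strengthening), and the sign you quote for $\sigma(3_1,e^{2\pi i/3})$ differs from the paper's by convention, which does not affect the non-vanishing conclusion.
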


\begin{proof}
Recall that the degree one map $X_S\to X_L$ induces an isomorphism $H_1(X_S;\Z)\to H_1(X_L;\Z)$. We denote
by $\varphi$ the induced homomorphism $H_1(X_S;\Z)\to \Z_2\oplus \Z_2$. Since $\gamma\subset X_L$ is null--homologous it follows from the previous section
that the induced map
\[ H_1(X_S^\varphi,Y_a^\varphi;\Z)\to H_1(X_L^\varphi,Y_a^\varphi;\Z)\] is an isomorphism
and in fact this map induces an isometry of linking forms.
Note that any form $\Z_9\times \Z_9\to \Q/\Z$ has a unique metaboliser.

Now let  $\chi:TH_1(X_S^\varphi,Y_a^\varphi;\Z)\to \Z_3\to S^1$ be a non--trivial character which vanishes on the unique metaboliser of the linking pairing on $TH_1(X_S^\varphi,Y_a^\varphi;\Z)\cong \Z_9$
and let $\phi \colon H_1(M_S^\vph) \to \mathcal{H}$ an epimorphism onto a free abelian group of rank 3.
By  Theorem \ref{thm:main_theorem} and Lemma \ref{lem:sigmatau}
we are done once we show that $\sigma(\tau(S,\chi))\ne 0$.

First note that $\chi$ and $\phi$ are induced from a character on \\$TH_1(X_L^\varphi,Y_a^\varphi;\Z)$ and an epimorphism on $H_1(M_L^\varphi)$,
which we denote with the same symbols.
We denote by  $\wt{\g}_1, \dots, \wt{\g}_{4}\subset X_L^\varphi$ the preimages of $\g$.
It follows from Theorem \ref{thm:litherland}
that
\[\sigma(\tau(S,\chi)) = \sigma(\tau(L,\chi)) + \sum_{i=1}^{4}\, \sigma(K,\omega_i),\]
where $\omega_i = \chi([\wt{\g}_i])$ and $[\wt{\g}_i]$ is the image of $\wt{\g}_i$ in $TH_1(X_L^\vph,Y_a^\vph)$.
Note that $ \sigma(\tau(L,\chi))=0$ since $L$ is concordant to the Hopf link and since $\chi:TH_1(X_L^\varphi,Y_a^\varphi;\Z)\to S^1$
also vanishes on the unique metaboliser of $TH_1(X_L^\varphi,Y_a^\varphi;\Z)\cong \Z_9$.
Furthermore note that any $\omega_i$ is a third root of unity, and moreover at least one $\omega_i$ is a non--trivial third root of unity since  $\g$ represents a generator of $TH_1(X_\mathcal{L}^\vph,Y_a^\vph)$.
It is well--known that $\sigma(K,1)=0$ and $\sigma(K,e^{\pm 2\pi i/3})=2$.
We thus see that $\sigma(\tau(S,\chi))\ne 0$.
\end{proof}

\subsection{Comparison with previous invariants}

We now slightly modify the example of the previous section.
By the proof of \cite[Proposition~7.4]{Fr04} there exists a knot $K$ such that $\int_{z\in S^1}\sigma(K,z)=0$ but such that
$\sigma(K,e^{2\pi i/3})=12$ and $\sigma(K,e^{4\pi i/3})=12$. We denote by $S$ the satellite link which is constructed as in the previous section, with the sole difference that we replace the trefoil knot by the knot $K$.
The argument of the previous section shows that our obstruction detects that $S$ is not concordant to the Hopf link.

We will now quickly discuss the earlier obstructions to link concordance which are known to the authors.  The enumeration here does not correspond exactly with the enumeration of link concordance obstructions which appears in the introduction.
\bn
\item The components of $S$ and $H$ are trivial knots, in particular components of $S$ do not give rise to non-trivial concordance obstructions.
\item The linking number of the two components of $S$ is $1$, which is the same as the linking number of the Hopf link. Note that for non--trivial linking numbers the higher Milnor $\ol{\mu}$-invariants are not defined; in particular they do not give further concordance obstructions.
\item A variation the proof of Lemma \ref{lem:sameh1finitecovers} shows that $\Delta_S=\Delta_\mathcal{L}\in \zst$.  It follows that
\[ \ba{rcl} \hspace{1cm}\Delta_S&=&(ts+1-s)(ts+1-t)\\
&=&s^{-1}t^{-1}(ts+1-s)(t^{-1}s^{-1}+1-s^{-1})\in \zst \ea \]
factors as a norm over $\zst$. In particular the
link concordance obstructions of Murasugi \cite{Mu67}, Kawauchi \cite{Ka77} \cite[Theorem~B]{Ka78} and Nakagawa \cite{Na78} are satisfied.
\item Since the curve $\g$ does not intersect a Seifert surface $F$ of $\mathcal{L}$ we see that the image of $F$ in
\[S^3 \setminus \nu \g \cup_{\theta} X_K \cong S^3\]
is a Seifert surface for $S$. One can easily see that the corresponding Seifert matrices for $\mathcal{L}$ and $S$ agree. Since $\mathcal{L}$ is concordant to the Hopf link it is now
straightforward to see that the link concordance obstructions of Tristram \cite{Tristram69} are satisfied.
\item Let $\mathbb{T}$ be one of the  two boundary components of $X_S$. It follows from Stalling's theorem (see \cite{St65}) applied to the homology equivalence $\mathbb{T} \to \pi_1(X_S)$, that any epimorphism from $\pi_1(X_S)$ onto a nilpotent group
(in particular a $p$-group) factors through the abelianisation of $\pi_1(X_S)$. It follows from standard arguments that all the twisted Alexander polynomials of $S$ corresponding to representations factoring through groups of prime power order are in fact determined by $\Delta_S$. Since $\Delta_S$ factors as a norm over $\zst$ it is now straightforward to verify that the link concordance obstruction of \cite{cha_twisted_2010} vanishes.
\item The fact that $\pi_1(X_S)$ has no non-abelian nilpotent  quotients also implies that the signature invariants by  Cha and Ko \cite{CK99,CK06},  Friedl \cite{Fri05}, Levine \cite{Le94}
and Smolinsky \cite{Sm89} are determined by signature invariants corresponding to abelian representations. The results of Litherland \cite{Litherland84} can easily be modified to show that such signature invariants of $S$ agree with the corresponding signature invariants of $\mathcal{L}$, which do not give any obstructions to being concordant to $H$, since $\mathcal{L}$ is concordant to $H$.
\item Similarly to the previous invariants one can also show that the obstructions of \cite[Theorem~1.4]{cha_Hirzebuch_type_defects} vanish in our case.
\item Let $\mathcal{L}_1\cup \mathcal{L}_2\subset S^3$ be a 2-component link with linking number one. We can consider the $k$-fold branched cover $Y:=Y(\mathcal{L}_1,k)$ of $S^3$ branched along $\mathcal{L}_1$. The preimage of $\mathcal{L}_2$ under the projection map $Y\to S^3$ consists of one component which we denote by $J$ (here we use that the linking number is one). If $k$ is a prime power, then $Y$ is a rational homology sphere. If $\mathcal{L}$ is concordant to the Hopf link and if $k$ is a prime power, then it is straightforward to see that $J$ is rationally concordant to the unknot.  One can therefore apply the obstructions of a knot being rationally concordant to the unknot.
    Such obstructions are given by  Levine-Tristram signatures (see Cha and Ko \cite{CK02}) and by Cochran-Orr-Teichner type Von Neumann $\rho$-invariants (see \cite{Ch07}, \cite{COT}).
    In our case it is straightforward to see that the preimage of one component of the satellite link is the satellite of a slice knot such that the infection curve is null-homologous.
    It follows from standard arguments (see above) that the Levine-Tristram signatures are zero. Finally, since also $\int_{z\in S^1}\sigma(K,z) = 0$, it follows that the obstructions of \cite{Ch07} cannot be used to show that the knot is not rationally slice.
\item As mentioned in the introduction, blowing down along one of the link components yields a knot in $S^3$ to which we can apply knot concordance obstructions, not just rational concordance obstructions (see \cite[Corollary~2.4]{CKRS10}).  It seems likely that Casson-Gordon obstructions of this knot will also obstruct $S$ from being concordant to the Hopf link.  In our example, blowing down $\mathcal{L}$ along the left hand component in the diagram (Figure \ref{Fig:link_conc_hopf_link3}) yields the slice knot $6_1$.  The 2-fold branched cover of $S^3$ along $6_1$, which we denote by $L_2^{6_1}$, has homology $H_1(L_2^{6_1};\Z) \cong \Z_9$, so we also take a non-trivial character $\chi \colon \Z_9 \to \Z_3$.  Keeping track of the curve $\g$, one then needs our character on $H_1(L_2^{6_1};\Z)$ to map the image of $\g$ to a non-zero element of $\Z_3$.
    One way to check that this is possible would be to use our methods of the previous section; we have not done the calculation.
    Litherland's formula \cite{Litherland84} for Casson-Gordon obstructions would then show that they do not vanish, thus giving another proof that $S$ is not concordant to $H$.
\en

In our obstruction theory, we make use of the fact that $\HH$ has rank greater than one when we take finite covers using maps $\vph \colon H_1(X_L) \to A$ which are non-trivial on the meridians of both link components.  In order to make use of the full power of our obstruction theory, we need an example which also makes use of the fact that $\HH$ has rank greater than one in the non-triviality of the Witt class $\tau(S,\chi)$ i.e. the fact that we define an obstruction in $L^0(\C(\HH))$ and not just in $L^0(\C(t))$.  Such examples do not arise from satellite constructions.

Added in proof: On the other hand, as mentioned in the introduction, Min Hoon Kim informs us that he has verified the conjecture that the obstructions of this paper vanishes for links which are height $(3.5)$ Whitney tower/Grope concordant to the Hopf link.  Therefore our satellite links are not height $(3.5)$ concordant to the Hopf link, but the blow down technique cannot be used to prove this.

\section*{Appendix: Homology of finite abelian covers}

It is well--known that the order of the homology of a cyclic cover of a knot complement can be computed from the Alexander polynomial.
More precisely, let $K$ be a knot, let $\Delta_K(t)\in \zt$ be its Alexander polynomial,  let $n\in \N$ and denote by $X_K^n$ the $n$-fold cyclic cover of $X_K$.
Then Fox \cite{FoxIII} (see also \cite{We79} and \cite[Section~1.9]{Tu86}) showed that
 $H_1(X_K^n)\cong \Z\oplus G$, where $G$ is a group which satisfies
\begin{equation} \label{equ:h1k} |G|=\prod\limits_{k=0}^{n-1}\Delta_K(e^{2\pi ik/n}).\end{equation}
Here we denote by $|G|$ the order of $G$. We use the convention that  $|G|=0$ if  $G$ is infinite.

In general a na\"{\i}ve extension of the above formula to the case of finite abelian covers of links does not hold.
We refer to \cite{MM82}, \cite{Sa95}, \cite{HS97} and \cite{Po04} for results relating the size of the homology of finite abelian (branched) covers of link complements to evaluations of the multivariable
Alexander polynomial and its sublinks.

In this paper we are only interested in 2--component links with linking number one. One of the guiding themes of this paper is that such links share many formal properties with knots, in particular many statements about abelian invariants of knots can be generalised fairly easily to such links.  In this appendix we will give an elementary proof that the analogue of (\ref{equ:h1k}) holds in our case.

Before we state our theorem we have to fix some notation.
Let $L$ be an oriented 2-component link. We  denote by $s$ and $t$ the canonical elements in $H_1(X_L;\Z)$ corresponding to the meridians.
If $\varphi\colon H_1(X_L)\to A$ is a homomorphism and $\eta\colon A\to S^1$ a character, then we can consider  $\eta(\varphi(s))\in S^1$ and $\eta(\varphi(t))\in S^1$.
We now have the following theorem.

\begin{theorem*}
Let $L$ be an oriented 2-component link with linking number one. Let $\varphi\colon H_1(X_L)\to A$ be an epimorphism onto a finite abelian group.
Then $H_1(X_L^\varphi)\cong \Z^2\oplus G$, where $G$ is a group which satisfies
\[ |G|=\prod\limits_{\eta\in \operatorname{Hom}(A,S^1)}\Delta_L(\eta(\varphi(s)),\eta(\varphi(t))).\]
Here $\Delta_L(s,t)$ denotes the Alexander polynomial of $L$.
\end{theorem*}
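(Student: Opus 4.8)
The plan is to imitate Fox's computation of the homology of cyclic covers of a knot exterior (\cite{FoxIII}; see also \cite[Section~1.9]{Tu86}), replacing the infinite cyclic cover by the universal abelian cover. Since $X_L$ is a compact $3$--manifold with non--empty boundary it is homotopy equivalent to a finite $2$--complex with a single $0$--cell, and $\chi(X_L)=0$; so, writing $\zst=\Z[H_1(X_L;\Z)]$, the cellular chain complex of the universal abelian cover $\widetilde X_L\to X_L$ has the form
\[C_*\colon\quad 0\to\zst^{c}\xrightarrow{\partial_2}\zst^{c+1}\xrightarrow{\partial_1}\zst\to 0.\]
Regarding $\Z[A]$ as a $\zst$--module via $\varphi$, base change gives the cellular chain complex of $X_L^\varphi$, so $H_*(X_L^\varphi;\Z)\cong H_*(C_*\otimes_{\zst}\Z[A])$; and since every $\Z[A]^k$ is a free abelian group of rank $k\lvert A\rvert$, computing this homology is linear algebra over $\Z$. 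First I would pin down the free part: the image of $H_1(X_L^\varphi;\Z)\to H_1(X_L;\Z)=\Z^2$ is $\ker\varphi$, which is free of rank $2$, so with $G:=\ker\bigl(H_1(X_L^\varphi;\Z)\to H_1(X_L;\Z)\bigr)$ the resulting short exact sequence splits and $H_1(X_L^\varphi;\Z)\cong\Z^2\oplus G$. It remains to compute $\lvert G\rvert$.

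The linking number hypothesis enters via the Torres conditions: $\lk(L_1,L_2)=1$ forces $\Delta_L(1,t)\doteq\Delta_{L_2}(t)$, which is non--zero with $\Delta_{L_2}(1)\doteq 1$; hence $\Delta_L\neq 0$ and $\Delta_L(1,1)\doteq 1$. From $\Delta_L\neq 0$ the Alexander module $\mathcal A:=H_1(\widetilde X_L;\Z)=H_1(C_*)$ is $\zst$--torsion, and a rank count (using that $\im\partial_1$ is the augmentation ideal, of $\zst$--rank one, so $\ker\partial_1$ has rank $c$) then shows that $H_2(\widetilde X_L;\Z)=\ker\partial_2$ is a torsion submodule of the free module $\zst^{c}$, hence zero. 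Thus $C_*$ is a bounded complex of free $\zst$--modules whose homology is concentrated in degrees $0$ and $1$, equal there to $\Z$ and $\mathcal A$, and the only obstruction to $C_*$ being quasi--isomorphic to $\Z\oplus\mathcal A[1]$ lies in $\Ext^2_{\zst}(\Z,\mathcal A)\cong\mathcal A\otimes_{\zst}\Z$. Here I would invoke that, exactly as for a knot, a link of linking number one admits a presentation of $\mathcal A$ by a \emph{square} matrix $V(s,t)$ over $\zst$ with $\det V\doteq\Delta_L(s,t)$; combined with $\Delta_L(1,1)\doteq 1$ this gives $\mathcal A\otimes_{\zst}\Z=\coker V(1,1)=0$, so $C_*\simeq\Z\oplus\mathcal A[1]$. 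Base changing (and using that $C_*$ consists of flat modules) and applying Shapiro's lemma to $\ker\varphi\cong\Z^2$ then yields
\[H_1(X_L^\varphi;\Z)\cong\operatorname{Tor}_1^{\zst}(\Z,\Z[A])\oplus\operatorname{Tor}_0^{\zst}(\mathcal A,\Z[A])\cong H_1(\ker\varphi;\Z)\oplus\bigl(\mathcal A\otimes_{\zst}\Z[A]\bigr)\cong\Z^2\oplus\bigl(\mathcal A\otimes_{\zst}\Z[A]\bigr),\]
so that $G\cong\mathcal A\otimes_{\zst}\Z[A]=\coker\bigl(V\otimes\id_{\Z[A]}\bigr)$.

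Finally, since $\Z[A]$ is a free $\Z$--module, when $G$ is finite its order is $\lvert\det{}_{\Z}\bigl(V\otimes\id_{\Z[A]}\bigr)\rvert$, which is the determinant of the multiplication--by--$\varphi(\det V)$ map on $\Z[A]$; via the ring isomorphism $\C[A]\cong\prod_{\eta\in\operatorname{Hom}(A,S^1)}\C$ this factors as $\prod_{\eta}\Delta_L\bigl(\eta(\varphi(s)),\eta(\varphi(t))\bigr)$. This is a positive integer once one normalises $\Delta_L$ so that $\Delta_L(1,1)=1$ and pairs each character $\eta$ with its conjugate $\bar\eta$ using $\Delta_L(s,t)\doteq\Delta_L(s^{-1},t^{-1})$; and when $\coker\bigl(V\otimes\id_{\Z[A]}\bigr)$ is infinite some factor $\Delta_L(\eta(\varphi(s)),\eta(\varphi(t)))$ vanishes, so both sides of the claimed identity are zero. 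I expect the main obstacle to be the existence of the square presentation $V(s,t)$ with $\det V\doteq\Delta_L$: this is the precise sense in which linking number one makes the Alexander module of $L$ behave like that of a knot (where $V=tW-W^{T}$ for a Seifert matrix $W$), and it is the one point where the hypothesis is used in an essential, non--formal way; granted it, the rest is the same bookkeeping with determinants over $\Z[A]$ as in the classical cyclic--cover computation.
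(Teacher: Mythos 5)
Your overall strategy is the same Fox-style computation that the paper carries out: reduce $|G|$ to the determinant over $\Z$ of a square presentation matrix base-changed along $\vph$ to $\Z[A]$, then diagonalise $\C[A]\cong\prod_{\eta}\C$ to obtain the product of values of $\Delta_L$ at characters. Most of your surrounding bookkeeping is fine: the identification of the free part $\Z^2$, the Shapiro computation $\Tor_1^{\zst}(\Z,\Z[A])\cong H_1(\ker\vph;\Z)\cong\Z^2$, the norm/determinant compatibility, and the treatment of the degenerate case where a factor vanishes. The derived-category splitting via $\Ext^2_{\zst}(\Z,\mathcal{A})\cong\mathcal{A}\otimes_{\zst}\Z$ is also correct (Koszul resolution of $\Z$ over $\zst$), though it is heavier machinery than the paper needs.

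The genuine gap is the step you yourself flag and then assume: that $\mathcal{A}=H_1(X_L;\zst)$ admits a \emph{square} presentation matrix $V$ with $\det V\doteq\Delta_L$. ``Exactly as for a knot'' is neither a proof nor an accurate citation: for a knot the square matrix $tW-W^{T}$ comes from a Seifert matrix, and there is no equally immediate Seifert-theoretic analogue producing a square presentation of the $\zst$-module of a $2$-component link; for general links no such presentation exists (this is exactly why the na\"{\i}ve extension of Fox's formula fails, as the paper notes with references to Mayberry--Murasugi et al.), so the linking-number-one hypothesis must enter at this point and an argument is required. This is precisely the content the paper supplies, and it does so relatively: collapse $X_L$ onto a $2$-complex $X$ containing a boundary torus $Y$ with all $0$-cells in $Y$; then $C_*(X,Y;\zst)$ is concentrated in degrees $1$ and $2$ with equal numbers of cells, so $H_1(X,Y;\zst)$ is presented by a square matrix $P$, and since $H_1(Y;\zst)=0$ and $H_1(Y;\Z)\to H_1(X_L;\Z)$ is an isomorphism (this is where linking number one is used), one gets $H_1(X,Y;\zst)\cong H_1(X_L;\zst)$ and hence $\det P\doteq\Delta_L$. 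If you work with this relative complex from the outset you can also dispense with the Torres conditions, the $H_2$-vanishing rank count and the derived splitting: the paper simply base-changes $P$ to $\Z[A]$, computes $|H_1(X^{\vph},Y^{\vph};\Z)|=|\det{}_{\Z}\vph(P)|=\prod_{\eta}\Delta_L(\eta(\vph(s)),\eta(\vph(t)))$, and then compares absolute and relative homology of the finite cover via the long exact sequence of the pair, using that $H_1(Y^{\vph};\Z)\cong\Z^2$ injects into $H_1(X^{\vph};\Z)$ (again by linking number one) to obtain $0\to\Z^2\to H_1(X^{\vph};\Z)\to H_1(X^{\vph},Y^{\vph};\Z)\to 0$. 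So either prove your invoked lemma along these lines or restructure the argument around the relative complex; as written, the crux of the theorem is assumed rather than established.
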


\begin{proof}
We can view $X_L$ as a finite CW complex. We denote by $Y$ one of the two boundary components of $X_L$.
We can collapse $X_L$ onto a 2-complex $X$ which contains $Y$ and such that all 0-cells lie in $Y$. We denote by $r$ the number of 1--cells in $X\sm Y$.
Note that the Euler characteristic of $X$ and $Y$ is zero; it follows that the number of 2--cells in $X\sm Y$ also equals $r$.
Furthermore note that the inclusions also induce isomorphisms $H_1(Y;\Z)\to H_1(X;\Z)\to H_1(X_L;\Z)$. Also note that $H_1(Y;\zst)=0$.  It follows from the long exact sequence of $\zst$-homology groups of the pair $(X,Y)$, that
\[ H_1(X,Y;\zst)\cong H_1(X;\zst), \]
and the latter is of course isomorphic to $H_1(X_L;\zst)$. Since $X$ is a 2-complex and since $Y$ contains all 0-cells of $X$ we obtain the following exact sequence of $\zst$-modules:
\[ \ba{ccccccc} & 0&\to& H_2(X,Y;\zst)&\to \\
\to &C_2(X,Y;\zst)&\to& C_1(X,Y;\zst)&\to \\
\to &H_1(X,Y;\zst)&\to& 0.\ea \]
Note that the boundary map
\[  C_2(X,Y;\zst)\to C_1(X,Y;\zst)\] is represented by an $r\times r$--matrix $P$.
Furthermore note that by definition of the Alexander polynomial, and since $H_1(X,Y;\zst)\cong H_1(X_L;\zst)$ we have $\det(P)=\Delta_L$.

Now let $\varphi\colon H_1(X_L;\Z)\to A$ be an epimorphism onto a finite abelian group.  We write $k=|A|$.
We denote the resulting epimorphism $H_1(X;\Z)\cong H_1(X_L;\Z)\xrightarrow{\varphi} A$ by $\varphi$ as well.
We also denote by $\varphi$ the induced map $\zst=\Z[H_1(X_L;\Z)]\to \Z[A]$ and finally we denote by $\varphi(P)$ the $r\times r$--matrix given by applying $\varphi$ to each entry of $P$.
We then obtain the following exact sequence of $\Z[A]$-modules:
\[ \ba{ccccccc}     &&   0&\to& H_2(X,Y;\Z[A])&\to \\
&\to& C_2(X,Y;\Z[A])&\xrightarrow{\partial} &C_1(X,Y;\Z[A])&\to \\
&\to& H_1(X,Y;\Z[A])&\to &0,\ea \]
where the boundary  map is presented by $\varphi(P)$.

We now pick an isomorphism $\Z[A]\to \Z^{k}$ of free $\Z$-modules. Note that the action of $H_1(X_L;\Z)$ on $\Z[A]$ given by $\varphi$ and by left multiplication of $A$ on $\Z[A]$ gives rise to a representation
$H_1(X_L;\Z)\to \operatorname{GL}(k,\Z)$, which we also denote by $\varphi$. Note that this map extends to a ring homomorphism $\zst\to \Z[H_1(X_L;\Z)]\to M(k,\Z)$ which we also denote by $\varphi$.
Finally we denote by $\varphi(P)$ the $kr\times kr$-matrix over $\Z$ given by applying the above ring homomorphism to each entry of $\varphi(P)$.
It follows from the above, that
\[ |H_1(X,Y;\Z[A])|=\det(\varphi(P)).\]
It is well--known that for any finite abelian group $B$ the regular representation $B\to \aut(\C[B])=\operatorname{GL}(|B|,\C)$ is conjugate to the representation
which is given by the direct sum of all characters $\eta\colon B\to S^1$.
It now follows that
\[ \ba{rcl} |H_1(X,Y;\Z[A])|&=&\det(\varphi(P))\\
&=&\prod_{\eta\in \operatorname{Hom}(A,S^1)}\det(\eta(P))\\
&=&\prod_{\eta\in \operatorname{Hom}(A,S^1)}\eta(\det(P))\\
&=&\prod_{\eta\in \operatorname{Hom}(A,S^1)}\eta(\Delta_L(s,t))\\
&=&\prod_{\eta\in \operatorname{Hom}(A,S^1)}\Delta_L(\eta(s),\eta(t)).\ea \]

Now note that we have an exact sequence
\[ \ba{ccclccccccc} &&&&& H_2(X,Y;\Z[A])&\to\\
\to & H_1(Y;\Z[A])&\to& H_1(X;\Z[A])&\to& H_1(X,Y;\Z[A])&\to\\
\to &H_0(Y;\Z[A])&\to& H_0(X;\Z[A]).\ea \]
Note that the last two groups are isomorphic to $\Z$ and the last map is an isomorphism.
We now consider the following commutative diagram
\[ \xymatrix{ H_1(Y;\Z[A])\ar[d]\ar[r] & H_1(X;\Z[A]) \ar[d] \\ H_1(Y)\ar[r] &H_1(X;\Z),} \]
where the vertical maps are induced by the surjection from the $\varphi$-cover to the base space. Note that the left hand vertical map is injective since $Y$ is a torus,
and the bottom horizontal map is injective since $L$ is a 2-component link with linking number one. It follows that the top horizontal map is also injective.
Finally  note that $H_1(Y;\Z[A])\cong \Z^2$.
Combining all of the above we obtain a short exact sequence
\[ 0\to \Z^2\to H_1(X;\Z[A])\to H_1(X,Y;\Z[A])\to 0.\]
The above calculation of $|H_1(X,Y;\Z[A])|$ implies the theorem.
\end{proof}

\subsection*{Acknowledgment.}

This research was mostly conducted while MP was a visitor at the Max Planck Institute for Mathematics in Bonn, which he would like to thank for its generous hospitality.

We furthermore would like to thank Jae Choon Cha, Jim Davis, Jonathan Hillman, Kent Orr, Danny Ruberman, Dan Silver and Lorenzo Traldi for helpful conversations, comments and feedback.

\bibliographystyle{annotate}
\bibliography{markbib1}

\end{document}